\newtheorem{theorem}{Theorem}[section]
\newtheorem{lemma}[theorem]{Lemma}
\newtheorem{prop}[theorem]{Proposition}
\newtheorem{defn}[theorem]{Definition}
\newtheorem{example}[theorem]{Example}
\newtheorem{remark}[theorem]{Remark}
\numberwithin{equation}{section}
\newcommand{\Q}{\mathbb{Q}}
\newcommand{\fq}{\mathfrak{q}}
\newcommand{\fb}{\mathfrak{b}}
\newcommand{\fm}{\mathfrak{m}}
\newcommand{\fc}{\mathfrak{c}}
\newcommand{\N}{\mathbb{N}}
\newcommand{\Z}{\mathbb{Z}}
\newcommand{\R}{\mathbb{R}}
\newcommand{\C}{\mathbb{C}}
\newcommand{\Int}{\mathrm{Int}}
\newcommand{\parallelsum}{\mathbin{\!/\mkern-5mu/\!}}
\DeclareFontFamily{U}{mathx}{\hyphenchar\font45}
\DeclareFontShape{U}{mathx}{m}{n}{
      <5> <6> <7> <8> <9> <10>
      <10.95> <12> <14.4> <17.28> <20.74> <24.88>
      mathx10
      }{}
\DeclareSymbolFont{mathx}{U}{mathx}{m}{n}
\DeclareMathAccent{\widecheck}{0}{mathx}{"71}
\DeclareMathAccent{\wideparen}{0}{mathx}{"75}
\begin{document}

\author[Hong]{Hansol Hong}
\address{Department of Mathematics \\ Yonsei University}
\email{hansolhong@yonsei.ac.kr, hansol84@gmail.com}
\author[Lin]{Yu-Shen Lin}
\address{Department of Mathematics and Statistics\\ Boston University}
\email{yslin@bu.edu}
\author[Zhao]{Jingyu Zhao}
\address{Center of Mathematical Sciences and Applications\\Harvard University}
\email{jzhao@cmsa.fas.harvard.edu, jzhao0105@gmail.com}

\begin{abstract}
We provide an inductive algorithm to compute the bulk-deformed potentials for toric Fano surfaces via wall-crossing techniques and a tropical-holomorphic correspondence theorem for holomorphic discs. As an application of the correspondence theorem, we also prove a big quantum period theorem for toric Fano surfaces which relates the log descendant Gromov-Witten invariants with the oscillatory integrals of the bulk-deformed potentials.

\end{abstract}

\title[Bulk-deformed potentials for toric Fano surfaces]{Bulk-deformed potentials for toric Fano surfaces, wall-crossing and period}

\maketitle
\section{Introduction}

The mirror of a Fano symplectic manifold $X$ is typically given by a Landau-Ginzburg model $W:\check{X}\rightarrow \mathbb{C}$, where $\check{X}$ is a smooth quasi-projective complex manifold and $W$ is a holomorphic function on $\check{X}$ called the superpotential. For toric Fano manifolds, such Landau-Ginzburg mirrors were first written down by Hori-Vafa \cite{HV}. Then it is proved by Cho-Oh \cite{CO} that the Hori-Vafa superpotential $W$ can be derived from the weighted counts of Maslov index two holomorphic discs with boundaries on Lagrangian torus fibers of the moment map. Explicitly, the Hori-Vafa potential is a Laurent polynomial defined on $(\C^*)^n$ which can be described in terms of the combinatorial data of the moment polytope. Later, this was generalized to toric semi-Fano manifolds by the work of Chan-Lau-Leung-Tseng \cite{CLLT} and to general toric manifolds  \cite{FOOO_t} and their bulk-deformations \cite{FOOO_bulk} by Fukaya-Oh-Ohta-Ono.

%
The latter also proved the closed string mirror symmetry conjecture for all toric manifolds in \cite{FOOO_toric} which states as follows. 
For each $\fb \in H^{even}(X),$ the bulk-deformed potential $W^{\fb}$ gives rise to a formal deformation of the Hori-Vafa potential $W$  by counting holomorphic discs with boundaries on the moment map fibers and passing through torus invariant cycles $D$ such that $[D]=PD(\fb)$.
Then there is a Kodaira-Spencer map $ \mathfrak{ks}$ which induces an isomorphism between the big quantum cohomology and the Jacobian ring of the bulk-deformed superpotential
   \begin{align*}
     \mathfrak{ks} \colon (QH^*(X) , *_{\fb}) \cong Jac(W^{\fb}).
   \end{align*} 
More recently, Smith \cite{Sm} found another algebraic method to prove the closed string mirror symmetry for toric varieties.
However, such an isomorphism as well as the bulk-deformed superpotential has not been studied in depth when the the cocycle $\fb$ is not torus invariant.


\indent In this paper, we study bulk deformations of the Hori-Vafa potentials of toric Fano surfaces by non-toric cycles $\fb$ in $H^4(X)$. Prior to our work, Gross \cite{G7} studied the bulk-deformed superpotential $W^{\fb}$ for $X=\mathbb{P}^2$ via counting of tropical discs passing through $k$ generic cycle constraints that represent $\fb$ in $H^4(\mathbb{P}^2)$. Inspired by this, we compute the $k$th order bulk-deformed potential $W_k$ for each $k\geq0$ rather than $W^\fb$ all at once, which counts $J$-holomorphic discs with $k$-interior marked points with boundaries on the Lagrangian toric fibers $L$. 
If we impose the condition that these $k$-interior marked points map to a given $k$-tuple $(q_1, \cdots, q_k)$ of points in generic positions, then the loci of Lagrangian torus fibers that bounds such discs with suitable Maslov index will give $1$-dimensional skeleton in the SYZ-base for $X$. 

More precisely, consider the moduli space $\mathcal{M}_{0,k}(L, \beta, J)$ of $J$-holomorphic discs passing through $k$ generic point constraints in $X$ and bounding $L$.
Its virtual dimension of is given by
\begin{equation}\label{eqn:dimfml}
\dim L + \mu(\beta) - 3 +2k - 4k.
\end{equation}
In the situation of our interest for which $\mu(\beta) = 2k$ and $\dim L =2$, \eqref{eqn:dimfml} becomes negative, and one concludes that generic fibers of the SYZ fibration $\pi$ do not bound such Maslov $2k$ discs passing through $k$ generic marked points in $X$. We will call such a disc \emph{a generalized Maslov index $0$} disc, since it behaves similarly to Maslov zero discs in usual wall-crossing phenomena in Floer theory \cite{Auroux_T, Auroux_survey}. 

In this circumstances, the bulk-deformed potential $W_k$ is no longer a well-defined Laurent polynomial on $(\C^*)^2,$ but it experiences discontinuities. Explicitly, as the Lagrangian torus fibers vary across different chambers separated by walls in the Log base (the Legendre transform of the moment map image), the bulk-deformed potentials $W_k$ will undergo so-called wall-crossing or cluster transformations. In the case of toric Fano surfaces, these cluster transformations are simply of the form
\begin{equation}\label{eqn:cluster}
\begin{array}{lcl}
 z_1 &\mapsto& z_1 \\
 z_2 &\mapsto& f(z_1,z_2)z_2,
\end{array}
\end{equation}
%
 for some holomorphic function $f$ on $(\C^*)^2$ up to a coordinate change.
 
To compute the bulk-deformed potential $W_k$ order by order, we first prove a tropical-holomorphic correspondence theorem which gives a combinatorial description of the locus of the Lagrangian torus fibers that bound generalized Maslov zero discs in the Log base (See section \ref{sec:wall_crossing_Wk} for details) via tropical disc counting. Such descriptions of tropical wall structures agree with Gross' work \cite{G7} in the case of $\mathbb{P}^2$, and our theorem below can be viewed as its generalization to all toric Fano surfaces.

\begin{theorem} \label{thm:correspondence}
For a toric Fano surface $X$, the chamber structure from generalized Maslov index $0$ discs is homeomorphic to the one given by tropical disc counting. 
\end{theorem}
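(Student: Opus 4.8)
The plan is to establish the homeomorphism by matching the two wall loci in the Log base $B$ \emph{piece by piece}, organized as an induction on the order $k$. On the holomorphic side, let $W^{\mathrm{hol}}_k \subset B$ be the locus of points $u$ such that the fiber $L = \pi^{-1}(u)$ bounds a generalized Maslov index $0$ disc through $q_1,\dots,q_k$; since the virtual dimension \eqref{eqn:dimfml} equals $-1$ here, imposing nonemptiness is a single real condition, so $W^{\mathrm{hol}}_k$ is genuinely one-dimensional. On the tropical side, let $W^{\mathrm{trop}}_k$ be the locus swept by the distinguished \emph{stem} edges of tropical discs whose $k$ marked legs are pinned at the tropicalizations of $q_1,\dots,q_k$. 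The base case $k=0$ is vacuous, and the first walls appear at $k=1$, where one Maslov index $2$ disc must pass through a single generic point.

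First I would fix the tropical side precisely, following the disc-counting setup of Gross \cite{G7}: each tropical disc is a weighted balanced trivalent tree in $B$ whose unbounded legs point in the primitive directions of the toric boundary divisors, whose marked legs terminate at $q_1,\dots,q_k$, and whose stem records the position of the disc boundary. The balancing condition together with $\mu(\beta)=2k$ pins the expected dimension of the stem to $1$, so that $W^{\mathrm{trop}}_k$ is a union of piecewise-linear segments and rays whose complement cuts $B$ into chambers; the inductive step attaches a new marked leg through $q_k$ at a trivalent vertex of an order $(k-1)$ disc.

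The heart of the argument is the correspondence at the level of individual discs, set up through an SYZ/large-volume degeneration in which the symplectic areas are scaled so that holomorphic discs converge to their tropicalizations under the logarithm map. In one direction, Gromov compactness forces any sequence of generalized Maslov index $0$ discs through the $q_i$ to degenerate to a tropical disc of the prescribed combinatorial type, so $W^{\mathrm{hol}}_k$ limits into $W^{\mathrm{trop}}_k$. In the reverse direction I would prove a \emph{realization} statement: each rigid tropical disc lifts to a holomorphic disc bounding the corresponding fiber, by gluing together the Maslov index $2$ building blocks along the edges of the tree and solving the Cauchy-Riemann equation with a Newton/implicit-function iteration. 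The gluing that attaches one more Maslov index $2$ disc through $q_k$ mirrors the tropical attachment above, and the regularity needed for it is guaranteed by the Fano condition, since the basic disc classes are unobstructed and explicitly regular by Cho-Oh \cite{CO}.

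The main obstacle I anticipate is the realization/gluing step together with its \emph{bijectivity}. I must ensure that the gluing produces exactly one holomorphic family per tropical type, so that no spurious holomorphic wall appears and none is missed, and that the gluing parameter moves so that the holomorphic and tropical stems sweep the \emph{same} subset of $B$ rather than merely combinatorially isomorphic ones. This demands uniform estimates as the scaling parameter degenerates, together with a careful tracking of how the cluster transformations \eqref{eqn:cluster} act when passing between chambers, so that the recursive construction of $W_k$ on the holomorphic side stays synchronized with the tropical recursion across each wall. Once the two wall loci are shown to coincide as images of matching disc families, the induced chamber decompositions of $B$ agree, and the identity map of $B$, isotoped to absorb the piecewise-linear-versus-smooth discrepancy between the tropical and holomorphic stems, is the desired homeomorphism.
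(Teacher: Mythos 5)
Your overall architecture (pass to the Mikhalkin/adiabatic limit, then prove two inclusions: holomorphic walls localize near tropical walls, and every tropical wall is realized holomorphically) matches the skeleton of the paper's Theorem \ref{thm:wall} and Theorem \ref{9999}. But both of your directions have genuine gaps at exactly the points where the paper has to work hardest. For the localization direction, "Gromov compactness forces any sequence of generalized Maslov index $0$ discs to degenerate to a tropical disc" is not a proof: Gromov compactness yields stable-map limits, and says nothing by itself about the $\mathrm{Log}$-image of a disc concentrating near a piecewise-linear tree, nor does it give the quantitative control needed to trap the wall in a shrinking neighborhood. The paper's Proposition \ref{1081} obtains this by a device absent from your proposal: the anti-symplectic involution \eqref{1005} of $(\C^*)^2$ doubles each generalized Maslov index $0$ disc into a real rational \emph{curve}, to which Mikhalkin's estimate \cite{M2} (Lemma \ref{9998}: the $\mathrm{Log}_t$-image of an algebraic curve lies within $d/\log t$ of its tropicalization) applies; the Fano condition bounds the degrees of the doubled curves, and this pins $\mathcal{W}^{holo}_t$ inside an $O(1/\log t)$-neighborhood of $\mathcal{W}^{trop}$. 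Without the doubling trick, or an independently proved amoeba-convergence estimate for discs, your first inclusion is unsupported.

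For the realization direction, you propose gluing Maslov index $2$ building blocks and solving the Cauchy--Riemann equation by Newton iteration, and you yourself flag the bijectivity and uniform estimates as "the main obstacle" --- but you never resolve it, and that is the crux of the theorem (a Nishinou--Siebert-style gluing is what \cite{N2} carries out in the algebro-geometric/log setting, and it is a substantial theory, not a step one can defer). The paper deliberately avoids gluing altogether and detects holomorphic walls \emph{indirectly} through Floer theory: for the extended walls, Proposition \ref{1301} compares wall-crossing counts $n_\beta$, $n_\beta'$ along two paths around $q_{k+1}$ using Fukaya's pseudo-isotopies and Stokes' theorem for Kuranishi correspondences, identifying $n_\beta - n_\beta'$ with the coefficient $N_\beta$ of $z^{\partial\beta}$ in $W_k(p_{k+1})$; nonvanishing of this coefficient forces a nonempty holomorphic wall near the tropical one. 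For the scattered walls, the proof of Theorem \ref{thm:wall} uses Fukaya's trick (Proposition \ref{1086}) together with Lemma \ref{1078}: the composition of the known wall-crossing maps around an intersection point is not the identity, so an additional holomorphic wall must exist with exactly the slab function predicted tropically. This soft, algebraic detection also supplies precisely the "no spurious walls, none missed" bookkeeping that your gluing plan leaves open. If you want to salvage your approach, you would need either to import the doubling-plus-Mikhalkin argument for the localization and prove a full gluing/surjectivity theorem for discs in the adiabatic limit, or to replace the gluing by the wall-crossing monodromy argument as the paper does.
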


It should be remarked that there is a similar work by Nishinou \cite{N2}, who used algebraic geometric approach to define the counting of tropical discs on toric varieties as certain log Gromov-Witten invariants and establish the tropical-holomorphic correspondence. In contrast, we begin by holomorphic disc counting via Lagrangian Floer theory and establish the tropical-holomorphic correspondence under certain limit. We also provide a counter-example to such a tropical-holomorphic correspondence away from the tropical limit in Section \ref{sec:9999}. After taking the tropical limit, one has the following correspondence theorem.
\begin{theorem}[Theorem \ref{thm:wall} and \ref{9999}] 
Let $X$ be a toric Fano surface.
  Given generic points $q_1,\cdots, q_k$ in $(\mathbb{C}^*)^2\subseteq X$ and a generic point $u$ in $\mathbb{R}^2$, we denote the Lagrangian torus fiber by $L_u:=\pi^{-1}(u)$. Then for any $t\gg 1$, 
   \begin{enumerate}
     \item there exists a neighborhood $\mathcal{U}_t \in \R^2$ depending on $t,$ which contains the holomorphic walls and deformation retracts to the union of the tropical wall structure (described below Theorem \ref{thm:wall}). 
     \item the (rescaled) bulk-deformed superpotential defined in Definition \ref{defn:bulk_potential} as
    \begin{align*}
   W_{k}(u):=W^{H_t^{-1}(q_1),\cdots H_t^{-1}(q_k)}_k(H^{-1}_t(u))
      \end{align*} can be computed tropically if $u$ is in the complement of $\mathcal{U}_t$, 
   \end{enumerate}
   where $H_t \colon (\C^*)^2 \rightarrow (\C^*)^2$ defined by $(x, y) \mapsto (|x|^{\frac{1}{\log{t}}} \frac{x}{|x|}, |y|^{\frac{1}{\log{t}}}\frac{y}{|y|})$ is the diffeomorphism of $(\C^*)^2$ introduced by Mikhalkin \cite{M2} and the superscript of $W_k$ indicates the bulk-deformed potential is computed subject to the generic $k$ point constraints.
\end{theorem}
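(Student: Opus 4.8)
The plan is to deduce both assertions from Mikhalkin's tropical degeneration procedure \cite{M2} applied to the moduli spaces of holomorphic discs that govern the wall structure and the potential $W_k$. The rescaling diffeomorphism $H_t$ is designed precisely so that, as $t \to \infty$, the image under the logarithm map of any holomorphic disc converges in the Hausdorff topology to its tropicalization. I would begin by stratifying the relevant moduli spaces according to the combinatorial type of the discs, namely the boundary class $\beta$, the collection of toric divisors the disc meets, and the distribution of the $k$ interior marked points among the components, so that each stratum can be matched with a combinatorial type of tropical disc. The Fano condition guarantees that all such discs are regular and that the Maslov index is controlled by the intersection pattern with the toric boundary, which is what makes the correspondence tractable.

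For the first assertion, I track the \emph{holomorphic wall}, that is, the locus of base points $u$ for which $L_u$ bounds a generalized Maslov index $0$ disc through the rescaled constraints $H_t^{-1}(q_1), \dots, H_t^{-1}(q_k)$. By the virtual dimension count \eqref{eqn:dimfml}, this locus is generically one-dimensional. I would show that, under the Mikhalkin rescaling, each such family of discs tropicalizes to one of the tropical walls described below Theorem \ref{thm:wall}, and that the Hausdorff convergence of amoebas to their spines traps the entire holomorphic wall inside a neighborhood $\mathcal{U}_t$ whose width shrinks to zero as $t \to \infty$. Since the tropical wall structure is a finite embedded graph, for $t \gg 1$ this neighborhood deformation retracts onto it. The homeomorphism of chamber structures supplied by Theorem \ref{thm:correspondence} then identifies the combinatorics of the holomorphic chambers with those of the tropical chambers, which is exactly what is required.

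For the second assertion, observe that once $u$ lies in the complement of $\mathcal{U}_t$, the fiber $L_u$ sits in a single chamber and $W_k(u)$ is a finite sum of monomials weighted by counts of Maslov index $2$ discs through the $k$ points. The core step is to upgrade this set-theoretic convergence into a count-preserving correspondence: I would establish a multiplicity-preserving bijection between these holomorphic discs and the tropical discs counted by the tropical disc count, so that each rigid tropical disc is the limit of exactly as many holomorphic discs as its tropical multiplicity prescribes. Summing the contributions then yields that the rescaled potential $W_k(u)$ coincides with the tropical superpotential.

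The main obstacle is this last multiplicity-matching step. In contrast to Mikhalkin's closed-curve correspondence, the discs here carry Lagrangian boundary on the torus fiber, and the degeneration must be controlled uniformly in the constraint data as $t \to \infty$. I expect to need a gluing and regularity argument showing that each rigid tropical disc lifts to precisely its tropical multiplicity of honest holomorphic discs, together with a compactness argument ruling out any holomorphic discs that either escape to infinity or bubble off in a manner invisible to the tropical picture. The Fano hypothesis is essential here: it bounds the Maslov indices and energies of all bubble components, so that no extraneous configurations survive the limit, and it is the technical heart on which both parts of the theorem ultimately rest.
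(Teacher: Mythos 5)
Your proposal for part (2) rests on a step you explicitly defer: a gluing and regularity argument producing a multiplicity-preserving bijection between rigid tropical discs and honest holomorphic discs with Lagrangian boundary, uniform in $t$. That is not a technical footnote; it is a full disc-level correspondence theorem (essentially what Nishinou \cite{N2} does in the log-algebraic setting), and it is precisely what the paper is structured to \emph{avoid}. The paper never glues. Instead it argues by induction on $k$ through the wall-crossing formalism: the only discs identified directly with tropical ones are the Cho-Oh Maslov index $2$ discs (the base case $W_0 = W_0^{trop}$), and every higher-order contribution is propagated algebraically. On the holomorphic side the propagation is Fukaya's trick (Proposition \ref{1086}) together with Proposition \ref{1301}, whose proof uses Stokes' theorem for correspondences and the composition formula to show that the wall-crossing transformation across an extended wall through $p_{k+1}$ is $z^{\partial\alpha} \mapsto z^{\partial\alpha}(1+N_\beta z^{\partial\beta})^{\langle\alpha,\beta\rangle}$ with $N_\beta$ read off from $W_k(p_{k+1})$; on the tropical side the same rules are Lemma \ref{1078} and Theorem \ref{1002}. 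Since both potentials satisfy identical wall-crossing rules and agree in the initial chambers, they agree everywhere. Your route is genuinely different and could in principle work, but as written it has a hole exactly where the difficulty lives, and you cannot patch it by citing Theorem \ref{thm:correspondence}: that theorem \emph{is} the statement being proved (it is deduced from Theorems \ref{thm:wall} and \ref{9999}), so invoking it for the chamber identification is circular.

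There is a second, more localized gap in part (1). You invoke ``Hausdorff convergence of amoebas to their spines'' for the holomorphic walls, but Mikhalkin's estimates (Lemma \ref{9998}) apply to closed algebraic curves in $(\C^*)^2$, where the tropicalization is the corner locus of $\max_{i,j}\, ix+jy+\log|a_{ij}|$; a disc with boundary on a Lagrangian torus fiber has no amoeba or Newton polytope a priori. The paper bridges this by doubling each disc through the anti-symplectic involution $\tilde\sigma(z_1,z_2) = (1/\bar z_1, 1/\bar z_2)$ (equation \eqref{1005}) to obtain a real rational curve, controlling its degree via the Blaschke-product form \eqref{eqn:hblaschke} of the Cho-Oh classification, and only then applying Lemma \ref{9998} to the closed curve in Proposition \ref{1081}. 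Moreover, Proposition \ref{1081} gives only containment of the holomorphic walls in a shrinking neighborhood of the tropical walls; the converse direction needed for the deformation retract onto the \emph{full} tropical structure --- that scattering walls actually exist holomorphically --- is obtained in the paper not by compactness but by exhibiting a nontrivial monodromy $F_\phi = \mathcal{K}_{\beta_1}\mathcal{K}_{\beta_2}\mathcal{K}_{\beta_1}^{-1}\mathcal{K}_{\beta}^{-1} \neq \mathrm{Id}$ around intersection points, which forces extra holomorphic walls via Proposition \ref{1086}. Your proposal does not address this existence direction at all.
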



As the tropical discs can be determined combinatorially, we obtain an inductive algorithm to compute the bulk-deformed potential $W^{\fb}$ for $X$ using the wall-crossing maps. 
They are obtained by Fukaya's pseudo-isotopies \cite{F1} applied to the bulk-deformed Fukaya $A_{\infty}$-algebra $\{ \fm_k^{\fb}\}_{k \geq 0}$ defined in \cite{FOOO_bulk}, and in particular, 
are $A_\infty$-homomorphisms that are compatible with the bulk-deformed $A_\infty$-structures on the Fukaya algebras of $L_u$. As a result, the expressions of the bulk-deformed potentials $W_k$ in adjacent chambers of the Log base differ by the wall-crossing maps. The details of this computation will be given in Section \ref{sec:pseudo_iso} below.


Furthermore, given our algorithmic computation of $W_k$ for each $k$ and the tropical-holomorphic correspondence theorem, one observes that although the non-toric bulk-deformed superpotential may depend on the choice of the moment torus fiber in different chambers of the Log base $\R^2$, the corresponding LG periods, or oscillatory integrals
      \begin{align*}
         \int_{\Gamma} e^{W_k(u)/\hbar}\Omega
      \end{align*} 
are well-defined for $\Gamma\in H_n(\check{X},\mbox{Re}(W_k/\hbar)\ll 0)$ and $\Omega=\frac{dz_1}{z_1} \wedge \frac{dz_2}{z_2}$ is the standard holomorphic volume form on $(\C^*)^2$. In the case when $\Gamma=\frac{1}{(2\pi i)^2}T^2$ in $(\C^*)^2,$ the independence of the chamber structure can be deduced from the following big quantum period theorem, which states that there is a direct relation between such oscillatory integrals of $W_k$ with log Gromov-Witten descendants of the toric Fano surface $X$.

\begin{theorem}[Theorem \ref{thm:period_thm}]\label{thm:big_quantum_period}
Let $X$ be a toric Fano surface and let $L_u$ be any Lagrangian torus fiber of the moment map $\pi.$ The bulk-deformed potential $W_k(u)$ associated to $L_u$ satisfies  for any $k \in \N$
\begin{eqnarray}
&& \ \  \frac{1}{(2\pi i)^2}\int_{T^2}e^{W_k(u)/\hbar}\frac{dx_1}{x_1}\wedge \frac{dx_2}{x_2}\nonumber \\
&& =1+\sum_{I} \sum_{m \geq 2} \sum_{\Delta: |\Delta|=m+n} \frac{1}{|\mbox{Aut}(\Delta)|}\langle p_1,\cdots,p_n, \psi^{m-2}u\rangle^{X(\log{D})}_{\Delta,n}\hbar^{-m}t_I,
\end{eqnarray}
where $t_I=t_{i_1}\cdots t_{i_n}$ and $\langle \cdot ,\cdots, \cdot \rangle^{X(\log{D})}_{\Delta,n}$ denotes the log Gromov-Witten invariants with $n$ affine constraints and for $\Delta$ such that $|\Delta|=m+n.$ The formal parameter $\hbar$ is usually referred to as the \textit{descendant variable} in Gromov-Witten theory.
\end{theorem}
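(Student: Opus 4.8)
The plan is to reduce the oscillatory integral to a residue computation, read off its coefficients as constant terms of powers of the potential, and then interpret those constant terms through the tropical--holomorphic correspondence as log descendant invariants. First I would expand
$$e^{W_k(u)/\hbar}=\sum_{m\ge 0}\frac{1}{m!\,\hbar^{m}}\,W_k(u)^{m},$$
and recall that on the cycle $T^{2}\subseteq(\C^{*})^{2}$ one has, by iterated residues (equivalently, orthogonality of characters),
$$\frac{1}{(2\pi i)^{2}}\int_{T^{2}}x_{1}^{a}x_{2}^{b}\,\frac{dx_{1}}{x_{1}}\wedge\frac{dx_{2}}{x_{2}}=\delta_{a,0}\,\delta_{b,0}.$$
Hence the period extracts, order by order in $\hbar^{-1}$, the constant (degree-zero) Laurent coefficient $[\,\cdot\,]_{0}$ of $W_k(u)^{m}$:
$$\frac{1}{(2\pi i)^{2}}\int_{T^{2}}e^{W_k(u)/\hbar}\,\frac{dx_{1}}{x_{1}}\wedge\frac{dx_{2}}{x_{2}}=\sum_{m\ge 0}\frac{1}{m!\,\hbar^{m}}\,[\,W_k(u)^{m}\,]_{0}.$$
The term $m=0$ gives the leading $1$, while $m=1$ vanishes because, by Definition~\ref{defn:bulk_potential}, every contributing Maslov-index-two class $\beta$ has nonzero boundary $\partial\beta$ and so contributes a nonconstant monomial; this already accounts for the range $m\ge 2$ on the right-hand side.

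Second, I would unpack $[\,W_k(u)^{m}\,]_{0}$ geometrically. Writing $W_k(u)=\sum_{\beta}n_{\beta}(u)\,x^{\partial\beta}\,t_{I(\beta)}$, where $\partial\beta\in H_{1}(L_u;\Z)\cong\Z^{2}$ is the boundary class and $t_{I(\beta)}$ records the bulk constraints met by $\beta$, the constant-term condition forces a balancing relation $\sum_{i=1}^{m}\partial\beta_{i}=0$ on every $m$-tuple $(\beta_{1},\dots,\beta_{m})$ that contributes. This is precisely the condition needed to glue the $m$ disc boundaries along the common fiber circle of $L_u$: the glued object is a genus-zero stable map to $X$ passing through the $n$ interior point constraints $p_{1},\dots,p_{n}$ inherited from the bulk, of combinatorial type $\Delta$ with $|\Delta|=m+n$ (the $m$ unbounded ends recording the directions $\partial\beta_{i}$ and the $n$ legs recording the point constraints).

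Third, I would run this through the tropical--holomorphic correspondence. By Theorem~\ref{thm:correspondence} and the wall theorem (Theorem~\ref{thm:wall}), in the tropical limit the disc counts $n_{\beta}(u)$ are computed by tropical disc counts, so $[\,W_k(u)^{m}\,]_{0}$ becomes a sum over balanced configurations of $m$ tropical discs, i.e.\ over tropical curves of type $\Delta$. The gluing of the $m$ ends at the central vertex produces a descendant $\psi$-class: the local model there is an $(m+1)$-marked rational curve whose moduli $\overline{\mathcal M}_{0,m+1}$ has dimension $m-2$, so the top power $\psi^{m-2}$ at the evaluation point is forced, giving the insertion $\psi^{m-2}u$. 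Matching symmetry factors, the $\tfrac{1}{m!}$ from the exponential cancels against the labelling of the $m$ glued ends and leaves exactly $\tfrac{1}{|\Aut(\Delta)|}$, while Nishinou's tropical correspondence for log Gromov--Witten invariants identifies the tropical count with $\langle p_{1},\dots,p_{n},\psi^{m-2}u\rangle^{X(\log D)}_{\Delta,n}$. Summing over $\Delta$, over $m\ge 2$, and over the bulk multi-indices $I$ (producing $t_{I}=t_{i_{1}}\cdots t_{i_{n}}$) then reproduces the right-hand side.

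The main obstacle I anticipate is the gluing step in the third paragraph: proving rigorously that a balanced $m$-tuple of Maslov-two discs glues to a single closed curve counted by a log descendant invariant, and that the central node where the $m$ discs meet contributes exactly $\psi^{m-2}$ with the stated automorphism weight. This requires a careful neck-stretching / tropical-gluing analysis at the central vertex together with the dimension count verifying that passing through the $n$ bulk points and imposing the balancing condition leaves a zero-dimensional problem matched by the $\psi^{m-2}$ insertion. Reconciling the combinatorial automorphism factor against the analytic $\tfrac{1}{m!}$ and fixing orientations and signs is the delicate part; everything else reduces to the residue computation of the first step and to the correspondence theorems already established.
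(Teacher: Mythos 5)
Your first step (the residue computation reducing the period to constant terms of $\frac{1}{m!}W_k(u)^m$, with $m=0,1$ accounting for the leading $1$ and the range $m\ge 2$) and your bookkeeping of combinatorial factors follow the paper's proof closely, but there is a genuine gap exactly at the step you flag as the ``main obstacle,'' and your proposed fix points in the wrong direction. You plan to glue a balanced $m$-tuple of \emph{holomorphic} Maslov-two discs into a closed genus-zero stable map and to extract the $\psi^{m-2}$ insertion by a neck-stretching analysis at the central vertex. As stated this is not just unproven but problematic: the $m$ holomorphic discs bound $m$ distinct loops in $L_u$ whose classes merely sum to zero in $H_1(L_u;\Z)$; there is no common circle along which to glue them, and no closed $J$-holomorphic curve in $X$ literally contains such a configuration. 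Your dimension heuristic for $\psi^{m-2}$ (that $\overline{\mathcal{M}}_{0,m+1}$ has dimension $m-2$) likewise does not prove that the glued count equals a descendant invariant.

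The paper dissolves this difficulty by reversing the order of operations: it first applies the tropical-holomorphic correspondence (Theorem \ref{9999}) to replace $W_k(u)$ by $W_k^{trop}(u)$, so that the gluing happens entirely at the tropical level, where it is combinatorially trivial --- $m$ tropical discs sharing the root $u$ with $\sum_i \partial[h_i]=0$ assemble, by taking the union of trees, into a rigid curve in $\mathcal{M}^{trop}_{\Delta,n}(X,p_{i_1},\dots,p_{i_n},\psi^{m-2}u)$, the balancing condition at the valency-$m$ vertex $u$ being precisely the vanishing of the total boundary class; the converse (chopping at $u$) is Lemma \ref{lem:split_MI2}. The multinomial factor $\frac{1}{m!}\cdot\frac{m!}{m_1!\cdots m_d!}$ then reproduces the $\prod_{\rho}\frac{1}{n_\rho !}$ appearing in the tropical multiplicity $\mathrm{Mult}'(h)$ of Definition \ref{defn:tropical_descedant} (this is \emph{not} the same as your claim that it ``leaves exactly $1/|\mathrm{Aut}(\Delta)|$''), with $t_i^2=0$ ensuring each bulk disc occurs at most once. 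Finally, the passage from tropical descendants to $\frac{1}{|\mathrm{Aut}(\Delta)|}\langle p_1,\dots,p_n,\psi^{m-2}u\rangle^{X(\log D)}_{\Delta,n}$ is not Nishinou's correspondence (which does not treat $\psi$-class insertions) but the Mandel--Ruddat theorem quoted as Theorem \ref{thm:MR_main}; that citation is where the automorphism factor enters. With these two substitutions --- combinatorial tropical gluing in place of analytic gluing, and Mandel--Ruddat in place of Nishinou --- your outline becomes the paper's proof; without them, the central identification remains unestablished.
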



Such relations between descendant Gromov-Witten invariants and oscillatory integrals have been studied in closed-string mirror symmetry by Givental \cite{Givental} and Barannikov \cite{Barannikov} for $X=\mathbb{P}^n$ under the name of quantum periods and semi-infinite variations of Hodge structures. Assuming closed-string mirror symmetry for Fano manifolds are proved, for instance for $X=\mathbb{P}^2$ by Barannikov \cite{Barannikov2}, the work of Gross \cite{G7} defined the tropical bulk-deformed superpotential and studied the flat coordinates on the miniversal deformation space (or the formal deformation space) of the Hori-Vafa potential for $\mathbb{P}^2$ and matched these oscillatory integrals with tropical descendant invariants that he defined (See Remark \ref{rmk:Gross} below for more details). More recently, the work of Chan-Ma \cite{Chan_Ma} obtained Gross's scattering diagrams for $\mathbb{P}^2$ from the asymptotic analysis of the Maurer-Cartan equations associated to the mirror dgla (See  \ref{subsec:reltother} for details).  For all monotone symplectic manifolds, the work of Tonkonog \cite{T5} proved the small quantum period theorem which states that the $1$-pointed descendant Gromov-Witten invariants match with the oscillatory integrals of its Laudu-Ginzburg mirrors which are not necessarily affine varieties. Independently, Mandel \cite{M6} studied the Frobenius conjecture proposed by Gross-Hacking-Keel \cite{GHK}. As a corollary, Mandel also derived a similar small quantum period theorem for cluster varieties.


\subsection*{Outline of the paper} In Section \ref{sec:pseudo_iso}, we first review the construction by Fukaya-Oh-Ohta-Ono of the bulk deformations of the Fukaya's algebras associated to the Lagrangian torus fibers. For toric Fano surfaces, we prove that the weak Maurer-Cartan elements associated to the bulk-deformed Fukaya's algebra is given by $H^1(L).$ In particular, we provide a bulk-deformed version of Fukaya's trick in Proposition \ref{1086}, which is crucial for the computations of the wall-crossing formula later. In Section \ref{sec:trop_sec}, we review the basic definitions of the tropical curve counting and the bulk-deformed counterparts of tropical disc counting. In Section \ref{sec:wall_crossing_Wk}, we explicitly compute the initial wall structures on the Log base for $\mathbb{P}^2.$ Here we demonstrate via a computation using \cite{CO} that the walls on the Log base are not given by straight lines in general, which complicates the determination of the wall structures in the holomorphic setting due to the ambiguities of the shape of the walls. In order to solve this problem, we appeal to the tropical degeneration of the standard complex structure on $(\C^*)^2$ defined by Mikhalkin in \cite{M2} in Section \ref{sec:HTcorres}, where  we prove the tropical-holomorphic correspondence theorem by making use of an anti-symplectic involution of $(\C^*)^2$. 
Finally, in Section \ref{sec:period_thm} we prove the big quantum period theorem using the tropical-holomorphic correspondence Theorem \ref{thm:correspondence} for disc countings and match them with tropical descendant invariants with one higher valency vertex, considered by Gross in \cite{G2}. Based on the recent work by Mandel-Ruddat \cite{MR16}, we obtain that such tropical descendant invariants in fact agree with descendant log Gromov-Witten invariants.

\subsection*{Acknowledgements}We thank Denis Auroux and Shing-Tung Yau for their interests in this work. We are grateful to Mohammed Abouzaid, Cheol-Hyun Cho, Yoosik Kim, Siu-Cheong Lau, Travis Mandel, Kyler Siegel and Hsian-Hua Tseng for useful discussions. The work is substantially supported by the Simons
Foundation grant (\# 385573, Simons Collaboration on Homological Mirror Symmetry). We also thank Harvard CMSA for the hospitality and wonderful research environment.

\subsection*{Notations}
\begin{enumerate}
\item[$\bullet$] Let $N\cong \mathbb{Z}^2$ be a lattice and $M:=\mbox{Hom}(N,\mathbb{Z})$ be the dual lattice. Denote $N_{\mathbb{R}}=N\otimes \mathbb{R}$ and $M_{\mathbb{R}}=M\otimes \mathbb{R}$. Let $\Sigma\in M_{\mathbb{R}}$ be the toric fan and $X:=X_{\Sigma}$ be the associate toric variety.
\item[$\bullet$] Let $\pi \colon X \rightarrow P$ be the moment map fibration and $P$ denotes the moment polytope. For simplicity, we will denote by $\pi$ its Legendre transform $\pi \colon X\backslash D \cong (\C)^* \rightarrow \R^2, \ (x,y) \mapsto (\log(x), \log(y))$.
\item[$\bullet$] Fix a Lagrangian torus fiber $L_u=\pi^{-1}(u),$ we denote by $\mathcal{M}_{k+1, l}(L_u, \beta, J)$ be the moduli space of stable bordered $J$-holomorphic discs representing the class $\beta$ in $H_2(X, L_u)$ with $k+1$ boundary marked points and $l$ interior marked points with respect to the \textit{standard complex structure} $J$ on $X.$
\item[$\bullet$] We denote the $k$ generic marked points by $q_1 ,\cdots,q_k$ in $(\C^*)^2$ and their images under the Log map as $p_i =\log(q_i)$ for $i=1,2,\cdots, k.$
\end{enumerate}

\section{Preliminaries} \label{sec:pseudo_iso}

\subsection{Basic Floer theory}

In this section, we review Fukaya's $A_{\infty}$-algebra associated to each Lagrangian torus fiber $L_u:=\pi^{-1}(u)$ and pseudo-isotopies between such $A_{\infty}$-algebras defined in \cite{F1}. In particular, we will construct the $A_{\infty}$-homomorphism via pseudo-isotopies of $A_{\infty}$-algebras in the bulk-deformed setting. 

\subsubsection{The de Rham model for Fukaya's $A_{\infty}$-algebra}
Let $\Lambda_0$ denote the Novikov ring over the real numbers $\R$,
\begin{equation}
\Lambda_0:=\{\sum_{i=1}^{\infty} a_i T^{\lambda_i} \mathbin{|}  \lambda_i \geq 0, \displaystyle\lim_{i \rightarrow \infty} \lambda_i= \infty \text{ and }a_i \in \R\}. \nonumber
\end{equation}
There is a non-Archimedean valuation $\mathrm{val} \colon \Lambda_0 \rightarrow \R,$  
\begin{equation}
\mathrm{val}(\sum_{i} a_i T^{\lambda_i})= \inf\{ \lambda_i \mathbin{|} a_i \neq 0 \} \text{ and } \mathrm{val}(0)=\infty. \nonumber
\end{equation}
The maximal ideal of $\Lambda_0$ is defined as $\Lambda_+:=\mathrm{val}^{-1}((0, \infty)).$ 

\begin{defn}
Suppose that $G$ is a discrete monoid with a homomorphism $\omega \colon G\rightarrow \R$ such that
\begin{equation}
|\omega^{-1}([0,a))|< \infty, \ \ \forall a \in \R. \nonumber
\end{equation}
Let $C$ be a $\Z/2\Z$-graded vector space over $\Lambda_0.$ A $G$-gapped filtered $A_{\infty}$-structure on $C$ is a sequence of homomorphisms $\{ \fm_{k,\beta}\}_{k \geq 0, \beta \in G}$ of degree $1$
\begin{equation}
\fm_{k,\beta} \colon B_kC[1]:=\underbrace{ C[1] \otimes \cdots\otimes C[1]}_{k \text{ times }  }\rightarrow C[1], \nonumber
\end{equation}
where $(C[n])^d=C^{d+n}$ is a degree shift, and these operations induce a coderivation of degree $1$ \begin{equation}
\hat{m}:=\sum_{k \geq 0} \sum_{\beta \in G} \fm_{k, \beta}T^{\omega(\beta)} \colon BC[1]=\bigoplus_{k=0}^{\infty}B_kC[1] \rightarrow C[1] \nonumber
\end{equation}
satisfying $\hat{m}\circ \hat{m}=0$ on $BC[1]$.
\end{defn}
\begin{defn}
An $A_{\infty}$-homomorphism between two $G$-gapped filtered $A_{\infty}$-algebras $(C, \{ \fm_{k, \beta}\})$ and $(C', \{ \fm_{k, \beta}'\})$ are defined by a sequence of homomorphisms
\begin{equation}
f_{k,\beta} \colon B_kC[1] \rightarrow C'[1] \nonumber
\end{equation} 
of degree $0$ such that the induced map 
\begin{equation}
\hat{f}=\sum_{k \geq 0} \sum_{\beta \in G} f_{k, \beta} \colon BC[1] \rightarrow BC'[1] \nonumber
\end{equation}
satisfies $\hat{m}'\circ \hat{f}=\hat{f} \circ \hat{m},$ where $\hat{m}$ and $\hat{m}'$ are the coderivations on $BC[1]$ and $BC'[1]$ induced by $\{ \fm_{k, \beta}\}$ and $\{ \fm_{k, \beta}'\}$ respectively.
\end{defn}

\indent For a fixed Lagrangian fiber $L$ of toric moment map $\pi \colon X \rightarrow P,$ we denote $H$ as the graded vector space generated by smooth singular cycles $C$ of \textit{even} dimensions in $X$ of degree $n-\dim(C)$ over $\R$ and $\Omega(L)$ as the de Rham cochain complex of $L$.  For each $\fb \in H,$ there is a $G$-gapped filtered $A_{\infty}$-algebra structure on $\Omega(L)\widehat{\otimes}\Lambda_0$ constructed via bulk deformations by Fukaya-Oh-Ohta-Ono in \cite{FOOO_bulk} as follows.  For an $\omega$-tamed almost complex structure $J$ and a fixed relative homology class $\beta \in H_2(X, L;\Z),$ we let $\mathcal{M}_{k+1,l}(L, \beta, J)$ be the moduli space of stable $J$-holomorphic discs with $k+1$ boundary marked points and $l$ interior marked points representing the class $\beta$. An element of $\mathcal{M}_{k+1,l}(L, \beta, J)$ is of the form
\begin{equation}
(\Sigma, \psi, \{ z_i^+\mathbin{|} i=1,\cdots, l\}, \{z_j\mathbin{|} j=0,1,\cdots, k\}), \nonumber
\end{equation} 
where $\Sigma$ is a connected tree of discs, and $\psi \colon (\Sigma, \partial \Sigma) \rightarrow (X, L)$ is a $J$-holomorphic curve with $z_i^+ \in \psi(\Int(\Sigma))$ and $z_j \in \psi(\partial \Sigma)$ and $\psi$ has only finite automorphisms. One further imposes the condition that the ordering of the boundary marked points $z_j$ agrees with the chosen orientation of $\partial \Sigma.$ It is shown in \cite[Section 7.1]{FOOO} that it has a Kuranishi structure with boundaries and corners of dimension $n+2l+k+1-3+\mu(\beta),$ where $\mu(\beta)$ is the Maslov index of $\beta.$ There are interior evaluation maps
\begin{equation}
ev^{int} \colon \mathcal{M}_{k+1,l}(L, \beta, J) \rightarrow X^l, \ \ (\Sigma, \psi, \{ z_i^+\}, \{ z_j\}) \mapsto (\psi(z_1^+), \cdots, \psi(z_l^+ )).\nonumber
\end{equation}
One fixes a basis $\{\textbf{f}_i\}_{i=1}^N$ of $H,$ then  each singular smooth cycle $\fb$ in $H$ can be written in terms of the basis as
\begin{equation}
\fb=\sum_i t_i \textbf{f}_i \nonumber.
\end{equation}
Since the ordering of the interior marked points $z_i^+$ is not fixed a priori, we denote by $\mathrm{Maps(l, \underline{N})}$ the set of all maps $\textbf{q} \colon \{1,\cdots, l \} \rightarrow  \underline{N}=\{1,\cdots, N\}$ and define the fiber product as
\begin{equation} \label{eqn:fibre_prod}
\mathcal{M}_{k+1,l}(L,\beta, J; \textbf{q}):=\mathcal{M}_{k+1,l}(L, \beta, J)_{ ev^{int}} \times_{X^l} \prod_{i=1}^l f_{\textbf{q}(i)}.  
\end{equation}
Its virtual dimension is $n+2l+k+1-3+\mu(\beta)-\sum_i (2n-\dim(f_{\textbf{q}(i)})).$ We include several properties of the moduli space in the case of toric Fano surfaces in Lemma \ref{799}, \ref{798} and \ref{lem:no_bubbles} for later use.
\begin{lemma} \label{799}
   Let $\beta' \in H_2(X, L_u)$ be a relative homology class. If it satisfies $\beta'=k\beta$ for some integer $k>1$ and some primitive homology class $\beta$ with $\mu (\beta')=k \mu (\beta)=2l+2$, then the moduli space $\mathcal{M}_{k+1,l}(L_u, \beta', J;\textbf{q})$ for the bulk-deformed potential contains no holomorphic discs with multiple covers for generic point constraints.
\end{lemma}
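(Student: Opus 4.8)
The plan is to rule out multiply-covered holomorphic discs in $\mathcal{M}_{k+1,l}(L_u, \beta', J; \textbf{q})$ by a dimension-counting argument applied to the underlying primitive (simple) disc, exploiting that our almost complex structure $J$ is the standard (integrable) one on the toric Fano surface $X$ and that the point constraints $q_1,\dots,q_l$ are generic. Suppose for contradiction that there is a stable disc representing $\beta' = k\beta$ with $k>1$ whose non-constant component is a $k$-fold cover $\phi = \psi \circ \pi_k$ of a simple $J$-holomorphic disc $\psi$ in the primitive class $\beta$, where $\pi_k$ is a degree-$k$ branched cover of the domain disc. The image of such a cover coincides with the image of $\psi$, so all $l$ interior marked points of the cover must land on the image of the single primitive disc $\psi$.

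\medskip

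First I would record the dimension of the moduli space of \emph{simple} discs in the primitive class $\beta$. Since $\mu(\beta) = \tfrac{1}{k}\mu(\beta') = \tfrac{2l+2}{k}$, the unconstrained moduli space $\mathcal{M}_{1,0}(L_u,\beta,J)$ has virtual dimension $n - 2 + \mu(\beta) = \mu(\beta)$ when $\dim L = n = 2$, and for toric Fano surfaces with the standard complex structure these moduli spaces are regular (the relevant transversality is part of the standard Cho--Oh picture \cite{CO} underlying the Fukaya--Oh--Ohta--Ono construction). The key geometric observation is that the image of a single simple disc $\psi$ is a real $2$-dimensional subset of $X$, so the locus in $X$ swept out by the images of all simple discs in class $\beta$ bounding $L_u$ has real dimension at most $\mu(\beta) + 2$. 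I would then ask for this locus to meet a single generic point $q_i$: requiring the image to pass through one generic point of the real $4$-dimensional $X$ imposes $4$ real conditions, so after meeting one point the expected dimension of the family of simple discs through $q_i$ drops by $4 - 2 = 2$.

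\medskip

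The crux is the following incidence count. For a multiple cover all $l$ marked points lie on the \emph{same} underlying simple disc, so the constrained cover exists only if the single primitive disc $\psi$ passes through all $l$ generic points $q_1,\dots,q_l$ simultaneously. Passing through $l$ generic points of $X$ forces the image of $\psi$ to contain $l$ generic points, and since that image is only $2$-dimensional, for $l \geq 2$ generic points this is impossible; and when $l \geq 1$ I would compare the dimension of the family of simple discs in class $\beta$ through the required points against zero. Concretely, the virtual dimension of the space of simple discs constrained to pass through $l$ generic points is $\mu(\beta) + 2l - \bigl(\sum_{i}(2n - \dim f_{\textbf{q}(i)})\bigr)$ for the cover, but because the cover factors through $\psi$ the effective constraint is that $\psi$ alone absorbs all $l$ point conditions; this makes the expected dimension of the primitive configuration strictly negative once $\mu(\beta) = (2l+2)/k < 2l+2$ for $k>1$, while a generic choice of $(q_1,\dots,q_l)$ realizes this expected dimension. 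Hence no such $\psi$ exists, and therefore no multiple cover exists.

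\medskip

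The main obstacle I anticipate is making the incidence/dimension bookkeeping rigorous in the Kuranishi framework: I must ensure that the branched covers $\pi_k$ contribute their own moduli (the configuration space of branch points and the position of the cover's marked points among the preimages), and verify that the constraint "all $l$ interior marked points map into the $2$-dimensional image of $\psi$" is correctly translated into a codimension estimate that beats the extra freedom coming from the $k$-fold cover. Because $J$ is integrable and $X$ is Fano of complex dimension $2$, I expect the primitive discs to be regular and their images to genuinely sweep out a subset of the predicted dimension, so that genericity of $(q_1,\dots,q_l)$ can be invoked via a standard Sard--Smale transversality argument to conclude that the constrained simple family is empty. The Fano condition is essential here: it guarantees $\mu(\beta) \geq 2$ on all disc classes and prevents low-energy sphere or disc bubbles that could otherwise furnish unexpected multiply-covered configurations.
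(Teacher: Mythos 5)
Your proposal is correct and takes essentially the same route as the paper's proof: both reduce to the underlying simple disc in the primitive class $\beta$ (whose image must absorb all $l$ point constraints since a multiple cover has the same image), invoke regularity of simple discs from the Cho--Oh classification, and conclude by dimension counting plus Sard's theorem that no simple disc in class $\beta$ can pass through $l$ generic points, because the moduli space $\mathcal{M}_{0,l}(L_u,\beta,J)$ has dimension $\mu(\beta)+2l-1\leq 3l$ while the interior evaluation map targets the $4l$-dimensional space $X^l$.
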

\begin{proof}
 
Given that $\beta'=k\beta$ for some $k >1$ and primitive homology class $\beta$ that satisfy the Maslov index relation $\mu(\beta')=k \mu(\beta)=2l+2$. One can consider the moduli space $\mathcal{M}_{0,l} (L_u,\beta)$ is a smooth manifold without stable discs compactification in \cite{CO} for the primitive relative class $\beta.$ Since for $k>1$ the virtual dimension of this moduli space is $\mu(\beta)+2-3+2l \leq 3l$, which implies that the domain of the interior evaluation map
        \begin{align*}
    \mathcal{M}_{0,l}(L_u,\beta, J)  \rightarrow X^{l}.
        \end{align*}
         has dimension less than $4l,$ while the target $X$ has the dimension $4l$. One concludes from Sard's theorem that every point in $X^l$ is a critical value, and the generic fiber of the boundary evaluation map is empty.  
\end{proof}
As a consequence, the moduli space $\mathcal{M}_{1,l}(L_u, \beta, J;\textbf{q})$ admits no automorphisms. 

\begin{defn} \label{1034} 
Fix $\textbf{q}=(q_1,\cdots, q_k)\in (\mathbb{C}^*)^2$ and let $L$ be a Lagrangian in $X$ which is isotopic to a moment map fiber. We define the holomorphic walls as follows
    \begin{align*}
      \mathcal{W}^{holo}=\bigcup_{0 \leq n\leq k}\{u\in \Int(P)| \partial_1\mathcal{M}_{0,n}(L_u, \beta, J; \textbf{q}_n) \neq \emptyset \text{ for } \beta \in H_2(X,L_u), \ \mu(\beta)=2n\},
    \end{align*} where $\partial_1\mathcal{M}_{0,n}(L, \beta, J)$ denotes the real codimension one boundary of the moduli space $\mathcal{M}_{0,n}(L, \beta, J)$ and $\textbf{q}_n=(q_{i_1}, \cdots, q_{i_n})$ is a $n$-tuple of generic marked points for some ordered subset $\{i_1, \cdots, i_n \} \subset \{1,2, \cdots, k\}$.
   
\end{defn}

\begin{lemma} \label{798} For generic point constraints,
   the holomorphic wall $\mathcal{W}^{holo}$ has dimension at most one and there exists no moment fibre bounding negative Maslov index discs. 
\end{lemma}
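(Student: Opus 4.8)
The plan is to treat the two assertions separately, both through virtual dimension counts that draw on the Fano condition and on the genericity of the point constraints $q_1,\dots,q_k$. I would dispose of the second assertion first, as it is essentially disc positivity. For the standard complex structure $J$ on a toric Fano surface, Cho--Oh \cite{CO} show that the Maslov index of a class $\beta\in H_2(X,L_u)$ carried by a non-constant $J$-holomorphic disc equals $2\sum_j \beta\cdot D_j$, where the $D_j$ are the irreducible toric boundary divisors and each $\beta\cdot D_j\ge 0$ by positivity of intersections. Hence $\mu(\beta)\ge 0$, with equality only if $\beta$ is disjoint from the anticanonical divisor $D=\sum_j D_j$; but a non-constant disc whose boundary lies on a fibre $L_u\subset(\C^*)^2=X\setminus D$ must meet $D$ (otherwise its image lies in $(\C^*)^2$ and the maximum principle forces it to be constant), so in fact $\mu(\beta)\ge 2$. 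Since holomorphic spheres in a Fano manifold have positive Chern number, sphere bubbling only raises the index. Thus no moment fibre bounds a disc (or disc--sphere configuration) of negative Maslov index, which is the second assertion.

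For the first assertion I would run the dimension count for the generalized Maslov index $0$ discs that define $\mathcal{W}^{holo}$. Fix $n\le k$ and a class $\beta$ with $\mu(\beta)=2n$. By the Kuranishi dimension formula recalled above, for a single fibre $L_u$ the moduli space $\mathcal{M}_{0,n}(L_u,\beta,J;\textbf{q}_n)$ has virtual dimension
\[
\dim_{\R} L + 2n + 0 - 3 + \mu(\beta) - 4n = 2 + 2n - 3 + 2n - 4n = -1,
\]
where the $-4n$ accounts for the $n$ codimension-four point constraints $\textbf{q}_n$. Genericity of $q_1,\dots,q_k$ makes the interior evaluation map $ev^{int}$ transverse to $(q_{i_1},\dots,q_{i_n})$; here Lemma \ref{799} rules out multiple covers and the Fano positivity just established rules out index-lowering bubbling, so this virtual dimension is the actual dimension. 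In particular a generic $L_u$ bounds no such disc, and the codimension-one boundary $\partial_1\mathcal{M}_{0,n}(L_u,\beta,J;\textbf{q}_n)$ is empty for generic $u$.

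To bound the dimension of the wall itself I would pass to the universal moduli space over the base, $\widetilde{\mathcal{M}}_{\beta,n}:=\bigcup_{u\in\Int(P)}\mathcal{M}_{0,n}(L_u,\beta,J;\textbf{q}_n)$, which fibres over the two-dimensional $\Int(P)$ with fibres of virtual dimension $-1$ and hence has virtual dimension $-1+2=1$. Nonemptiness of $\partial_1\mathcal{M}_{0,n}(L_u,\beta,J;\textbf{q}_n)$ forces the closed fibre over $u$ to be nonempty, so the $(\beta,n)$-contribution to $\mathcal{W}^{holo}$ is contained in the image of $\widetilde{\mathcal{M}}_{\beta,n}$ under the projection to $\Int(P)$, and therefore has dimension at most $\dim\widetilde{\mathcal{M}}_{\beta,n}=1$. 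Finally, since $\mu(\beta)=2n\le 2k$ bounds the symplectic area of $\beta$ on the Fano surface, only finitely many classes $\beta$ occur, and $n$ ranges over $\{0,\dots,k\}$; hence $\mathcal{W}^{holo}$ is a finite union of subsets of dimension at most one, and so has dimension at most one.

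The main obstacle is the transversality and compactness input needed to convert the virtual count into an honest dimension statement: one must check that for generic constraints the relevant moduli spaces are cut out transversally and that no degeneration (multiple cover, sphere bubble, or disc bubble) inflates the dimension. This is precisely where Lemma \ref{799} and the Fano positivity of the second assertion enter, since together they ensure that only the expected configurations appear, so that the $-1$ virtual dimension is realized fibrewise and the universal space is genuinely one-dimensional over the base.
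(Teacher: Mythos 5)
Your treatment of the first assertion is essentially the paper's own argument. The paper forms the universal moduli space $\mathcal{M}=\bigcup_{u\in \Int(P)}\bigcup_{\beta:\,\mu(\beta)=2l}\mathcal{M}_{0,l}(L_u,\beta,J)$ over the two-dimensional base, notes that its dimension is $(2+2l-3+\mu(\beta))+2=4l+1$ while $\dim X^l=4l$, and concludes via Sard's theorem that the fibre of $ev^{int}$ over a generic constraint tuple has dimension at most one; $\mathcal{W}^{holo}$ is the projection of that fibre to $\Int(P)$. Your version --- cutting by the point constraints fibrewise to get virtual dimension $-1$ over each $u$, then adding $2$ for the base --- is the same count organized in the opposite order, supplemented by reasonable remarks on multiple covers (Lemma \ref{799}) and on finiteness of the classes $\beta$.

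The second assertion is where there is a genuine gap. You read ``negative Maslov index discs'' literally and dispose of it by Cho--Oh positivity, $\mu(\beta)=2\sum_j \beta\cdot D_j\ge 2$ for nonconstant discs. That statement is true, but it is trivial in the toric Fano setting: it holds for \emph{every} moment fibre and makes no use of the genericity of the $q_i$, so it cannot be the content of a lemma whose hypothesis is ``for generic point constraints.'' What the paper means, and what it actually uses later (the proof of Proposition \ref{1086} invokes Lemma \ref{798} to exclude discs of negative \emph{generalized} Maslov index), is the index of Definition \ref{defn:bulk_potential}: $\mu'(\beta)=\mu(\beta)-2l$, where $l$ interior marked points are required to pass through the generic constraints. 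For example, a Maslov index $2$ disc passing through two generic point constraints has $\mu'=-2$, and your positivity argument says nothing about it. The correct proof is the same dimension count as in the first part, which is exactly what the paper means by ``follows similarly'': for $\mu(\beta)\le 2l-2$ the universal space $\bigcup_{u\in \Int(P)}\mathcal{M}_{0,l}(L_u,\beta,J)$ has dimension $2l+1+\mu(\beta)\le 4l-1<4l=\dim X^l$, so by Sard the fibre of $ev^{int}$ over a generic tuple $(q_{i_1},\dots,q_{i_l})$ is empty, i.e.\ no moment fibre bounds a disc of negative generalized Maslov index subject to generic constraints. Your positivity observation is a fine complement (it rules out honestly negative-index discs, constrained or not), but it does not prove the statement the paper needs.
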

\begin{proof}
   The proof is similar to Lemma \ref{799}. Consider the moduli space 
     \begin{align*}
        \mathcal{M}=\bigcup_{u\in  \Int(P)} \bigcup_{\beta; \mu(\beta)=2l} \mathcal{M}_{0,l}(L_u,\beta,J).
     \end{align*} 
 Then the fiber of the interior evaluation map 
    \begin{align*}
     \mathcal{M}\rightarrow X^{l}
    \end{align*} has generic fiber of dimension less equal to $1$. The holomorphic wall $\mathcal{W}^{holo}$ is the projection of the generic fiber to the base of the moment map fibration, which is also of dimension at most one. The last part of the lemma follows similarly.
 \end{proof}
 
\begin{lemma} \label{lem:no_bubbles}
Assume that $\mu (\beta)=2l+2$.
   Then the interior of $\mathcal{M}_{1,l}(L_u,\beta, J)$ has no sphere bubbles for generic point constraints $\textbf{q}$. 
\end{lemma}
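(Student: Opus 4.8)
The plan is to rule out sphere bubbles by a dimension count on the relevant interior strata combined with the genericity of the constraints, exactly in the spirit of Lemmas \ref{799} and \ref{798}. Since the complex structure $J$ is the fixed standard one and cannot be perturbed, the only available genericity is in the location of the points $\textbf{q}=(q_1,\dots,q_l)$, so the argument must show that the interior evaluation image $ev^{int}$ of each sphere-bubble stratum is too small to contain a generic $\textbf{q}$.

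First I would isolate a stratum carrying a single sphere bubble and write $\beta=\beta'+\alpha$, where $\beta'$ is the class of the disc component and $\alpha$ is an effective sphere class. Because $X$ is Fano, $d:=c_1(\alpha)\geq 1$, so $\mu(\beta')=\mu(\beta)-2d=2l+2-2d$. Distributing the $l$ interior marked points as $l-s$ on the disc and $s$ on the sphere, and adding the node to each component, the standard dimension formulas give $\dim \mathcal{M}_{1,(l-s)+1}(L_u,\beta',J)=4l-2d-2s+4$ for the disc part and $\dim \overline{\mathcal{M}}_{0,s+1}(X,\alpha,J)=2d+2s$ for the sphere part. Matching the node is a fiber product over $X$, subtracting $2n=4$, so the stratum has real dimension $4l$. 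This is codimension two inside $\mathcal{M}_{1,l}(L_u,\beta,J)$, which has dimension $4l+2$, consistent with sphere bubbling being a real-codimension-two (hence interior) phenomenon.

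The decisive step is to analyze $ev^{int}$ on this stratum, whose target $X^l$ also has dimension $4l$. The key point is that $ev^{int}$ does not depend on the boundary marked point $z_0$, so it factors through the map forgetting $z_0$. As long as the disc component is non-constant, forgetting $z_0$ drops the dimension by one, since the position of $z_0$ along $\partial\Sigma$ is a genuine modulus; therefore the image of $ev^{int}$ has dimension at most $4l-1<4l$. By Sard's theorem a generic $\textbf{q}$ lies outside this image, and since the energy bound coming from $\mu(\beta)=2l+2$ leaves only finitely many decompositions $(\beta',\alpha,s)$ by Gromov compactness, one generic choice of $\textbf{q}$ avoids all such strata simultaneously. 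The case of a sphere tree with several components only lowers the dimension further and is handled identically.

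The main obstacle is the degenerate case $\beta'=0$, a constant (ghost) disc carrying the boundary: here forgetting $z_0$ destabilizes the component and no longer drops the dimension, so the naive bound yields only $\dim\, ev^{int}(\mathcal{S})\leq 4l$. I would dispose of this case by observing that such a degeneration forces the entire boundary class to be carried by the constant disc, so that $\partial\beta=0$ in $H_1(L_u)$; but the disc classes relevant to the potential satisfy $\partial\beta\neq 0$, whence $\beta'\neq 0$ and the component is genuinely non-constant. A secondary technical point is the regularity needed to invoke Sard's theorem on the unperturbed moduli spaces: for the disc components this is provided by the Cho--Oh description of holomorphic discs bounding toric fibers, and for the sphere components one uses that rational curves on a toric del Pezzo surface behave as expected dimensionally, precisely as in the proofs of Lemmas \ref{799} and \ref{798}.
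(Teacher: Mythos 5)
Your overall strategy---decompose the bubbled configuration, dimension-count against the point constraints, and lean on genericity of $\textbf{q}$ because the standard $J$ cannot be perturbed---is the same as the paper's, and your bookkeeping differs only mildly: the paper splits the $l$ constraints between the two components and derives a purely numerical contradiction (a disc component through $l_1$ generic points forces $\mu(\beta_1)\geq 2l_1+1$, a sphere component through $l_2$ generic points forces $l_2\leq \beta_2\cdot(-K_X)-1$, and summing gives $\mu(\beta)\geq 2l+3>2l+2$, which is the content of \eqref{eqn:ineq_1} and \eqref{eqn:ineq_2}), whereas you compute the dimension of the whole fiber-product stratum ($4l$) and recover the missing codimension from the forgetful map of $z_0$ before applying Sard once. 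These are equivalent arithmetic. In fact your explicit treatment of the ghost-disc case is \emph{more} careful than the paper's proof, which silently discards constant disc components via stability conventions; some restriction like your $\partial\beta\neq 0$ is genuinely needed, since for a spherical class a rational curve through the $l$ points that happens to meet $L_u$, capped off by a constant disc carrying $z_0$, is a sphere-bubbled configuration that no genericity of $\textbf{q}$ can kill.

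The genuine weak point is your sphere side. You use $\dim_{\R}\overline{\mathcal{M}}_{0,s+1}(X,\alpha,J)=2d+2s$, i.e.\ regularity of all genus-zero stable maps for the fixed integrable $J$, justified by the assertion that rational curves on a toric del Pezzo surface ``behave as expected dimensionally.'' This is false on the toric Fano surfaces other than $\mathbb{P}^2$ and $\mathbb{P}^1\times\mathbb{P}^1$: on a blowup of $\mathbb{P}^2$ one has $TX|_E\cong \mathcal{O}(2)\oplus\mathcal{O}(-1)$ along an exceptional curve $E$, so a degree-$d$ cover $f$ has $H^1(f^*TX)=H^1(\mathcal{O}(-d))\neq 0$ for $d\geq 2$, and the moduli of multiple covers of $E$ has excess dimension. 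Consequently your stratum dimension $4l$, and hence the bound $\dim\, ev^{int}(\mathcal{S})\leq 4l-1$, cannot be deduced from transversality, and the step fails exactly for these classes. The repair is what the paper does instead of a virtual count on the sphere factor: invoke the \emph{enumerative} fact that stable rational curves in class $\beta_2$ pass through at most $\beta_2\cdot(-K_X)-1$ generic points (inequality \eqref{eqn:ineq_2}); this holds regardless of obstructedness because the marked points must lie on the image curve, and images of genus-zero maps in a fixed class move in a family of dimension at most $c_1-1$. If you replace your sphere-moduli dimension count by this bound on the image of the sphere evaluation map, your fiber-product/Sard argument goes through on all toric Fano surfaces.
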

\begin{proof} This is again a dimension count argument.
   Suppose that the stable map 
   $$f:(\Sigma,\partial\Sigma) \to (X,L_u) \in \mathcal{M}_{1,l}(L_u,\beta, J)$$
    admits a sphere bubble, that is, $\Sigma=\Sigma_1\cup \Sigma_2$ for some stable disc $\Sigma_1$ and $\Sigma_2\cong \mathbb{P}^1$. Let $l_1$and $l_2$ be the number of interior marked points on $\Sigma_1$ and $\Sigma_2$, respectively, and let $\beta_1=f_*(\Sigma_1)$ and $\beta_2=f_*(\Sigma_2)$ so that $\beta=\beta_1+\beta_2$. In particular, we have $\mu(\beta_1) + \beta_2 \cdot K_X = \mu(\beta) = 2l+2$.
   
   From the similar arguments in Lemma \ref{798}, one observes that the moduli space $\mathcal{M}_{0,l_1}(L_u,\beta_1, J)$ is empty only if
   \begin{equation} \label{eqn:ineq_1}
   \mu (\beta_1)+2-3-2l_1<0.
   \end{equation}
   On the other hand, it is known that the moduli space of stable rational curves in class $\beta_2$ passing through $l_2$ points is empty only if 
   \begin{equation} \label{eqn:ineq_2}
   l_2>\beta_2\cdot K_{X}-1.
   \end{equation}
   Since $l=l_1+l_2$ and $\beta=\beta_1+\beta_2$, only one of inequalities \eqref{eqn:ineq_1} and \eqref{eqn:ineq_2} can hold, which gives a contradiction. 
\end{proof}

There are also boundary evaluation maps from the fiber product \eqref{eqn:fibre_prod} to $L^{k+1}$
\begin{equation}
ev=(ev_0, ev_1,\cdots, ev_k) \colon \mathcal{M}_{k+1,l}(L, \beta, J; \textbf{q}) \rightarrow L^{k+1}. \nonumber
\end{equation}
It is proved  in \cite[Corollary 3.1, Theorem 5.1]{F1} that there is a consistent choice of Kuranishi structures on $\mathcal{M}_{k+1,l}(L, \beta, J; \textbf{q})$ for each $k ,l \geq 0$ such that the evaluation map $ev_0$ is weakly submersive. Let $h_1, \cdots, h_k$ in $\Omega(L)$, one can then define an operator as follows
\begin{eqnarray}
&& \fq_{l , k, \beta} \colon B_l(H[2])\otimes B_k(\Omega(L)[1]) \rightarrow \Omega(L)[1], \nonumber \\
&& \fq_{l , k, \beta}(f_{\textbf{q}}; h_1, \cdots, h_k)=\frac{1}{l !}(ev_0)_!(ev_1, \cdots, ev_k)^*(h_1\wedge \cdots \wedge h_k). \nonumber 
\end{eqnarray}
Suppose that the degrees of the differential forms $h_i$ satisfy 
\begin{equation}
\sum_{j}((\deg(h_j)-1)-\mu(\beta)+\sum_i (2n- \dim(f_{\textbf{q}(i)})) +2=d \text{ for } d \in \mathbb{N,}
\end{equation}
the operation $ \fq_{l , k, \beta}$ defines a degree $d$ differential form in $\Omega(L).$ 
There is a symmetric group action of $\mathfrak{S}_l$ of order $l!$ on $B_l(H[2])$ given by
\begin{equation}
\sigma \cdot (x_1 \otimes \cdots \otimes x_l)=(-1)^{\dagger} x_{\sigma(1)} \otimes \cdots \otimes x_{\sigma(l)}, \nonumber
\end{equation}
where $\dagger=\sum_{i<j; \sigma(i) > \sigma(j)} \deg x_i \deg x_j$. We denote by $E_l(H[2])$ the subspace of $\mathfrak{S}_l$-invariant elements in $B_l(H[2])$ and denote the restriction of the operator $\fq_{l, k, \beta}$ to the symmetric tensors of the form
$$Sf_{\textbf{q}}=\frac{1}{l !} \sum_{\sigma \in \mathfrak{S}_l} f_{\textbf{q}(\sigma(1))} \otimes \cdots \otimes f_{\textbf{q}(\sigma(l))}$$ 
by the same notation  $\fq_{l , k, \beta} \colon E_l(H[2])\otimes B_k(\Omega(L)[1]) \rightarrow \Omega(L)[1]$. Then one obtains a family of operators
\begin{equation}
\fq_{l,k}:=\sum_{\beta}T^{\omega( \beta)} q_{l, k, \beta} \colon E_l(H[2])\otimes B_k(\Omega(L) \widehat{\otimes} \Lambda_0[1]) \rightarrow \Omega(L) \widehat{\otimes} \Lambda_0[1].\nonumber
\end{equation}
We remark that this operation $\fq_{l,k}$ is well-defined of degree one as the Maslov index $\mu(\beta)$ is even for any Lagrangian torus fiber $L$ and our vector spaces are taken to be $\Z/2$-graded.
For each $\fb \in H,$ we can define an operation 
\begin{eqnarray}
&& \fm_k^{\fb} \colon B_k(\Omega(L)[1]) \rightarrow \Omega(L)[1],\nonumber \\
&& \fm_k^{\fb}(h_1,\cdots, h_k)=\sum_{\beta}\fm_{\beta,k}^{\fb}(h_1, \cdots ,h_k)=\sum_{\beta}\sum_{l \geq 0}\fq_{l,k, \beta}(\fb, \cdots, \fb, h_1,\cdots, h_k). \nonumber 
\end{eqnarray}
 It is proved in \cite[Lemma 2.2]{FOOO_bulk} that this defines a $G$-gapped filtered $A_{\infty}$-algebra structure on $\Omega(L) \widehat{\otimes} \Lambda_0.$  
 \subsection{The canonical model and its Maurer-Cartan elements}\label{subsec:canmcelt}
\indent To define the canonical model on $H^*(L; \Lambda_0),$ we choose a Riemannian metric on $L$ and represent $H(L;\R)$ as the subspace of harmonic forms in $\Omega(L).$ By applying a version of the homological perturbation lemma for filtered $A_{\infty}$-algebra, Fukaya constructed a $G$-gapped filtered $A_{\infty}$-structure in \cite[Section 10]{F1} given by
\begin{equation}
\fm_{k}^{\fb,can} \colon B_k(H^*(L;\Lambda_0)[1]) \rightarrow H^*(L;\Lambda_0)[1], \nonumber
\end{equation}
which is quasi-isomorphic to the de Rham model $(\Omega(L)\widehat{\otimes}\Lambda_0, \{\fm_{k}^{\fb}\}_{k=1}^{\infty}).$  Given the canonical model of $L$, one can define the space of weak Maurer-Cartan elements of positive valuation as
\begin{equation}\label{eqn:weak_MC}
\widehat{\mathcal{MC}}^{\fb}_+(L):=\{ b \in H^{odd}(L;\Lambda_+) \mathbin{|} \sum_{k=0}^{\infty} \fm_k^{\fb, can}(b, \cdots, b)=\lambda  PD[L], \ \lambda \in \Lambda_+ \}.
\end{equation}
Here the infinite sum $ \sum_{k=0}^{\infty} \fm_k^{\fb,can}(b, \cdots, b)$ converges in the non-Archimedean topology defined by the norm $| \cdot |=e^{-val(\cdot)}$ for elements $b \in H^{odd}(L;\Lambda_+)$ with positive valuations.
We say that two such weak Maurer-Cartan elements $b_0$ and $b_1$ are gauge equivalent if there exist $b(t)$ of degree $1$ and $c(t)$ of degree $0$ for $t \in [0,1]$ satisfying the following conditions
\begin{eqnarray} \label{eqn:gauge}
&& b(0)=b_0 \text{ and } b(1)=b_1, \nonumber  \\
&& \frac{d}{dt} b(t)+\sum_k \fm_k^{\fb, can}(b(t),\cdots, b(t), c(t), b(t),\cdots, b(t))=0,\  \forall t \in (0,1). 
\end{eqnarray}
The moduli space of weak Maurer-Cartan elements of positive valuations in $\Omega(L) \widehat{\otimes} \Lambda_0$ is defined as
\begin{equation}
\mathcal{MC}^{\fb}_+(L):=\widehat{\mathcal{MC}}^{\fb}_+(L)/\sim, \nonumber
\end{equation}
where $\sim$ denotes the gauge equivalence relation. In the specific case that we consider, we prove the following Lemma.
\begin{lemma}\label{lem:weak_MC}
Let $L$ be a Lagrangian torus fiber of the moment map $\pi \colon X \rightarrow P$ for a toric Fano surface and $\fb=\textbf{t}_0\textbf{f}_0:=\textbf{t}_0PD[pt] \in H$. The weak Maurer-Cartan space associated to the $G$-gapped filtered $A_{\infty}$-algebra $(H^*(L; \Lambda_0), \{ \fm_{k}^{\fb, can}\}_{k=1}^{\infty})$ is given by
\begin{equation}
 \mathcal{MC}^{\fb}_+(L) = H^1(L; \Lambda_+). \nonumber
\end{equation}
\end{lemma}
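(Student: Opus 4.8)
The plan is to reduce the statement to a degree count together with a triviality check for the gauge relation. Since $L$ is a two-dimensional torus we have $H^{odd}(L;\Lambda_+)=H^1(L;\Lambda_+)$ (there is no odd cohomology above degree one), so $\widehat{\mathcal{MC}}^{\fb}_+(L)\subseteq H^1(L;\Lambda_+)$ and it remains to see that every degree-one class solves the weak Maurer--Cartan equation \eqref{eqn:weak_MC}, and that the gauge relation \eqref{eqn:gauge} identifies no two distinct such classes.

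First I would show $\widehat{\mathcal{MC}}^{\fb}_+(L)=H^1(L;\Lambda_+)$. Fix $b\in H^1(L;\Lambda_+)$. Because $\fb\in H^4(X)$ is even, each bulk-deformed operation $\fm_k^{\fb,can}$ is still of degree one in the shifted grading (as recorded after the definition of $\fq_{l,k}$); since $b$ has shifted degree zero, $\fm_k^{\fb,can}(b,\dots,b)$ has shifted degree one, hence lies in the one-dimensional top component $H^2(L;\Lambda_0)=\Lambda_0\cdot PD[L]$. Thus $\sum_{k\ge 0}\fm_k^{\fb,can}(b,\dots,b)=\lambda\,PD[L]$ holds automatically for some $\lambda\in\Lambda_0$, and the only point to verify is $\lambda\in\Lambda_+$. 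The classical ($\beta=0$) part of this sum vanishes: $\fm_1^{\fb,can}$ is zero on harmonic representatives, $\fm_2^{\fb,can}(b,b)$ is a multiple of $b\wedge b=0$ since $b$ is a one-form, and the higher classical products vanish by formality of the torus. Every remaining term therefore carries a factor $T^{\omega(\beta)}$ with $\omega(\beta)>0$, because all holomorphic discs bounding an interior moment fiber of a Fano toric surface have positive symplectic area; the resulting series converges in the non-Archimedean topology by gappedness. Hence $\lambda\in\Lambda_+$ and every $b\in H^1(L;\Lambda_+)$ is weakly unobstructed.

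Next I would show the gauge relation is trivial. Suppose $b_0,b_1\in H^1(L;\Lambda_+)$ are gauge equivalent via $(b(t),c(t))$ as in \eqref{eqn:gauge}, where $c(t)$ has degree zero. Since $L$ is connected, $H^0(L;\Lambda_0)=\Lambda_0\cdot\mathbf{e}$ with $\mathbf{e}$ the strict unit of the canonical model, so $c(t)=c_0(t)\,\mathbf{e}$. Strict unitality gives $\fm_1^{\fb,can}(\mathbf{e})=0$ and $\fm_k^{\fb,can}(\dots,\mathbf{e},\dots)=0$ for every $k\ge 3$, so the whole gauge term collapses to the two $\fm_2$ insertions, namely $c_0(t)\big(\fm_2^{\fb,can}(\mathbf{e},b(t))+\fm_2^{\fb,can}(b(t),\mathbf{e})\big)$. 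As $\fm_2^{\fb,can}(\mathbf{e},b)=(-1)^{\deg b}b$ and $\fm_2^{\fb,can}(b,\mathbf{e})=b$ with $\deg b=1$ odd, these cancel, so \eqref{eqn:gauge} reduces to $\tfrac{d}{dt}b(t)=0$ and forces $b_0=b_1$. Therefore $\mathcal{MC}^{\fb}_+(L)=\widehat{\mathcal{MC}}^{\fb}_+(L)=H^1(L;\Lambda_+)$.

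The main obstacle is not the degree bookkeeping, which is exactly where the two-dimensionality of $L$ does all the work: in higher dimension $H^{\mathrm{even}}(L)$ would contain classes other than $PD[L]$ and the weak Maurer--Cartan equation would impose a genuine constraint. Rather, the delicate structural input is that the canonical model constructed by Fukaya--Oh--Ohta--Ono can be taken strictly unital in the bulk-deformed setting, so that unit insertions annihilate $\fm_k^{\fb,can}$ for $k\neq 2$. Granting this, together with the positivity of disc areas used to place $\lambda$ in $\Lambda_+$, reduces the entire lemma to the two short computations above.
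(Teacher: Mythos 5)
Your second half --- gauge triviality via $c(t)=f(t)\mathbf{e}$, strict unitality killing all insertions with $k\neq 2$, and the cancellation of the two $\fm_2$ terms --- is essentially identical to the paper's argument. The first half, however, contains a genuine gap, and it sits precisely at the step you dismiss as mere ``degree bookkeeping''. You claim that since $\fm_k^{\fb,can}$ has shifted degree one and $b$ has shifted degree zero, the sum $\sum_k \fm_k^{\fb,can}(b,\dots,b)$ automatically lies in the top component $H^2(L;\Lambda_0)$. This is false: the bulk-deformed operations have degree one only with respect to the $\Z/2$-grading (this is exactly what the paper's remark after the definition of $\fq_{l,k}$ records --- the spaces are taken $\Z/2$-graded precisely because the operations are not of pure $\Z$-degree). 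The component of $\fm_k^{\fb,can}$ involving $l$ interior insertions of the degree-four class $PD[pt]$ and discs of Maslov index $\mu(\beta)$ outputs a class of degree $2-(\mu(\beta)-2l)$, which is even but by no means always $2$: it equals $2$ when $\mu(\beta)=2l$ and $0$ when $\mu(\beta)=2l+2$. So a priori $\fm^{\fb,can}(e^b)$ has components in both $H^0$ and $H^2$, and no purely formal degree argument can separate them.

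Compounding this, you have the target of the weak Maurer--Cartan equation backwards: in the paper's (and Fukaya--Oh--Ohta--Ono's) convention, $PD[L]$ is the \emph{unit} $\mathbf{e}\in H^0(L)$, the Poincar\'e dual of the fundamental cycle, not the generator of $H^2(L)$; the paper's own proof states that ``the only element of degree zero is $\mathbf{e}=PD[L]$''. Hence what must be verified is that the degree-two components of $\fm^{\fb,can}(e^b)$ \emph{vanish}, so that the output is a multiple of the unit. Those degree-two components come exactly from generalized Maslov index zero configurations ($\mu(\beta)=2l$, e.g.\ Maslov $2l$ discs through $l$ point constraints), and they do not vanish for degree or formality reasons: they vanish because, for \emph{generic} point constraints, the moduli space $\mathcal{M}_{0,l}(L,\beta,J;\mathbf{p})$ has negative virtual dimension $\mu(\beta)-2l-1=-1$. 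This dimension count is the entire content of the paper's proof of weak unobstructedness and is absent from your proposal. It also cannot be circumvented: when $L$ lies on a wall these degree-two contributions are genuinely nonzero --- this is the wall-crossing phenomenon the whole paper studies --- so any argument that never invokes genericity of the bulk point (yours invokes it only to discard multiple covers implicitly) cannot prove the lemma. Your observations that the classical part vanishes ($b\wedge b=0$, formality of the torus) and that positive disc areas place $\lambda$ in $\Lambda_+$ are fine, but they address a minor point, not the missing one.
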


\begin{proof}
The lemma follows from a simple dimension argument. One needs to show that 
\begin{equation}
\sum_{k,\beta} \fm_{k, \beta}^{\fb, can}(b,\cdots, b), \nonumber
\end{equation}
is a multiple of the unit class in $H^0(L; \Lambda)$ for any $b \in H^1 (L;\Lambda_+)$. Because of the compatibility of the forgetful maps, the term $ \fm_{k, \beta}^{\fb, can}(b,\cdots, b)$ can be non-zero only if the moduli space $\mathcal{M}_{0,l}(L,\beta,J; \mathbf{p})$ has the virtual dimension
$$\dim (L)+2l-3+\mu(\beta)-l\deg(\fb)= \mu(\beta) -2l -1 \geq 0,$$
where $l$ is the number of interior marked points and we have used the fact that $\dim(L)=2$ and $\deg(\fb) = 4$ here. Therefore, we have $\mu(\beta)\geq 2l+1$.
This implies that $\mu (\beta) -2l \geq  2$ since $\mu(\beta)$ is even.  On the other hand, the degree of the output of $\fm_{k, \beta}^{\fb, can}(b,\cdots, b)$ is given by
$$2-(\mu(\beta) - 2l) \leq 0,$$ 
as $\mu(\beta) - 2l \geq 2$. Therefore the only possible degree of the output is zero. Since we are working on the canonical model, such a degree zero output must be a multiple of the unit class.\\
\indent To see that any two elements $b_0, b_1 \in  \widehat{\mathcal{MC}}^{\fb}_+(L)$ are not gauge equivalent, we assume by contradiction that there are $b(t)$ of degree $1$  and $c(t)$ of degree $0$ satisfying equation \eqref{eqn:gauge}. In the canonical model $(H^*(L;\Lambda_0), \{ \fm_{k}^{\fb, can}\}_k)$ the only element of degree zero is $\textbf{e}=PD[L],$ so one has $c(t)=f(t)\textbf{e}$ for some smooth function $f(t).$ As the canonical model is shown to be unital \cite[Lemma 6.14]{FOOO_bulk}, we have that 
\begin{eqnarray}
&& \fm_k^{\fb, can}(b(t),\cdots, b(t), \textbf{e}, b(t),\cdots, b(t))=0 \text{ for } k \neq 2,\nonumber \\
&& \fm_2^{\fb, can}(\textbf{e}, b(t))=-\fm_2^{\fb,can}(b(t),\textbf{e})= b(t). \nonumber
\end{eqnarray}
Then equation \eqref{eqn:gauge} becomes $\frac{d}{dt}b(t)=0$ and hence $b(t)=b_0=b_1$ is the constant class, which completes the proof.
\end{proof}
%

\subsubsection{The $A_{\infty}$-homomorphism between Fukaya $A_{\infty}$-algebras from the pseudo-isotopy}
For two different Lagrangian torus fibers of the moment map $\pi \colon X \rightarrow P,$ denoted as $L_u: = \pi^{-1}(u)$ and $L_{u'}:=\pi^{-1}(u')$, there is a notion of pseudo-isotopies between $G$-gapped filtered $A_{\infty}$-algebras introduced in \cite[Definition 8.5]{F2}.

\begin{defn}
A pseudo-isotopy of $G$-gapped filtered $A_{\infty}$-algebras consists of the data $(A, \{\fm_{k, \beta}^t\}_{k=1}^{\infty}, \{ \fc_{k, \beta}^t\}_{k=1}^{\infty} ),$ where
\begin{enumerate}
\item[(1)] The operations $\{\fm_{k, \beta}^t\}_{k=1}^{\infty}$ and $\{ \fc_{\beta;k}^t \}_{k=1}^{\infty}$ are smooth in $t$;
\item[(2)] For each fixed $t,$ the operations $\{\fm_{k, \beta}^t\}_{k=1}^{\infty}$ defines a $G$-gapped filtered $A_{\infty}$-structure on $A$;
\item[(3)] The following equation is satisfied for $x_i \in BA[1],$
\begin{eqnarray}
&& \ \ \ \ \frac{d}{dt} \fm_{k, \beta}^t(x_1,\cdots, x_k) \nonumber \\
&& =\sum_{k_1+k_2=k} \sum_{\beta_1+\beta_2=\beta} \sum_{i=1}^{k-k_2+1} \fm^t_{\beta_1;k_1}(x_1,\cdots, \fc_{\beta_2; k_2}^t(x_i,\cdots),\cdots, x_k)\nonumber \\
&& -\sum_{k_1+k_2=k} \sum_{\beta_1+\beta_2=\beta} \sum_{i=1}^{k-k_2+1}(-1)^{\dagger} \fc_{\beta_1;k_1}^t(x_1,\cdots, \fm_{\beta_2; k_2}^t(x_i,\cdots),\cdots, x_k), \nonumber 
\end{eqnarray}
where $\dagger=\deg(x_1)+1+\cdots +\deg(x_{i-1})+1.$
\item[(4)] When $\beta=0,$ we have that $\fm_{\beta;k}^t$ is independent of $t$ and $\fc_{\beta;k}^t=0$ for all $k.$
\end{enumerate}
\end{defn}

In our geometric situation, for any two interior points $u$ and $u'$ in $\Int(P)$ of the moment polytope, one chooses a smooth path $\phi \colon [0,1] \rightarrow \Int(P)$ such that $\phi(0)=u$ and $\phi(1)=u'$ and a family of diffeomorphisms 
\begin{equation}
\phi_t \colon X \rightarrow X, \text{ for each }t \in  [0,1].  \nonumber
\end{equation}
such that $\phi_1(L_u)=L_{u'}.$
One can construct a pseudo-isotopy between the two $A_{\infty}$-structures $\fm_{k}^{\fb}$ and $\fm_{k}^{\fb '}$ on $\Omega(L_u)\widehat{\otimes }\Lambda_0$ associated to the different choices of almost complex structures $J$ and $(\phi^{-1}_1)_*(J)$ that are used in the definition \eqref{eqn:fibre_prod}. To define the pseudo-isotopy, one considers the the parametrized moduli space
\begin{equation}
 \mathcal{M}_{k+1,l}(L_u, \beta, \mathcal{J};\textbf{q})= \bigcup_{t\in [0,1]} \{ t\} \times \mathcal{M}_{k+1,l}(L_u, \beta, J_t;\textbf{q}), 
\end{equation} 
where $J_t$ is a family of $\omega$-tamed almost complex structures defined by $J_t=(\phi_t^{-1})_* J$ and $\mathcal{M}_{k+1,l}(L_u, \beta, J_t;\textbf{q})$ is the fiber product 
\begin{equation}\label{eqn:J_t}
\mathcal{M}_{k+1,l}(L_u, \beta, J_t;\textbf{q}):=\mathcal{M}_{k+1,l}(L_u, \beta, J_t)_{ev^{int}} \times_{X^l} \prod_{i=1}^l f_{p(i)}. \nonumber
\end{equation}
There are boundary evaluation maps
\begin{eqnarray}
&& ev=(ev_1, \cdots, ev_k) \colon \mathcal{M}_{k+1,l}(L_u, \beta, \mathcal{J}) \rightarrow L_u^{k}, \nonumber \\
&& ev_{0,t} \colon  \mathcal{M}_{k+1,l}(L_u, \beta, \mathcal{J};\textbf{q}) \rightarrow L_u\times [0,1], \{ t\} \times \mathcal{M}_{k+1,l}(L_u, \beta, J_t;\textbf{q}) \mapsto (\psi(z_0), t). \nonumber
\end{eqnarray}
It is shown in \cite[Lemma 11.3]{F1} that the Kuranishi structures on $\mathcal{M}_{k+1,l}(L_u, \beta, \mathcal{J})$ can be chosen such that the evaluation map $ev_{0,t}$ is weakly submersive. We define the operations $\fq_{l,k, \beta}^{1}$ and $\fq_{l,k, \beta}^{0}$ by the formula
\begin{eqnarray} \label{eqn:pseudo-iso}
&& \ \ \ (ev_{0,t})_!(ev_1,\cdots, ev_k)^*(h_1 \wedge \cdots \wedge h_k) \nonumber \\
&&=\fq_{l,k, \beta}^{1}(f_{\textbf{q}(i)}, h_1,\cdots, h_k)+\fq_{l,k, \beta}^{0}(f_{\textbf{q}(i)}, h_1,\cdots, h_k) dt 
\end{eqnarray} 
for differential forms $h_i$ in $\Omega(L_u).$ The operations $ \{\fm_{k, \beta}^{\fb,t}\}_{k=1}^{\infty}$ and $ \{ \fc_{k, \beta}^{\fb,t}\}_{k=1}^{\infty}$ defining the pseudo-isotopy are given by
\begin{eqnarray}
&& \fm_{k, \beta}^{\fb,t}(h_1, \cdots, h_k)=\sum_{\beta}\sum_{l \geq 0}\fq_{l,k, \beta}^1(\fb, \cdots, \fb, h_1,\cdots, h_k)T^{\omega(\beta)}. \nonumber \\
&& \fc_{k, \beta}^{\fb,t} (h_1, \cdots, h_k)=\sum_{\beta}\sum_{l \geq 0}\fq_{l,k, \beta}^0(\fb, \cdots, \fb, h_1,\cdots, h_k)T^{\omega(\beta)}, \nonumber 
\end{eqnarray}
which are of degree $1-\mu(\beta) \text{ mod } 2=1$ and $-\mu(\beta) \text{ mod } 2=0$ respectively.

By construction, the triple $(\Omega^*(L_{u})\hat{\otimes}\Lambda_0, \{\fm_{k, \beta}^{\fb,t}\} , \{\fc_{k, \beta}^{\fb,t}\} )$ gives rise to a pseudo-isotopy. This gives rise to a pseudo-isotopy on the corresponding canonical model $(H^*(L_{u};\Lambda_0), \{\fm_{k, \beta}^{\fb, t, can}\} , \{\fc_{k, \beta}^{\fb, t, can}\} )$ by Theorem 8.4 in \cite{F1} and \cite[Section 2.10]{T4}. From such a pseudo-isotopy, one can construct an $A_{\infty}$-homomorphism 
\begin{eqnarray} \label{eqn:compatiblity}
&& \hat{f}^{\fb,can}:=\sum_{k \geq 0} \sum_{\beta \in H_2(X, L_u)} f^{\fb,can}_{k, \beta}  \colon B(H^*(L_{u};\Lambda_0)[1]) \rightarrow H^*(L_{u};\Lambda_0)[1], \nonumber\\
&&\hat{f}^{\fb, can} \circ \hat{\fm}^{\fb, can} =\hat{\fm}^{\fb ', can}\circ \hat{f}^{\fb, can}.
\end{eqnarray}
The explicit definitions of the operations
\begin{equation}
f_{k, \beta}^{\fb, can} \colon B_k(H^*(L_{u};\Lambda_0)[1]) \rightarrow H^*(L_{u};\Lambda_0)[1] \nonumber
\end{equation}
of degree $-\mu(\beta)=0 \text{ mod } 2$ are given in \cite[Section 11]{F1} and \cite[Section 2.7]{T4}.\\
\indent The $A_{\infty}$-structure $\{ \fm^{\fb, can}_k\}_{k=1}^{\infty}$ and the $A_{\infty}$-homomorphism $\{f^{\fb, can}_k \}_{k=1}^{\infty}$ that we defined on the canonical model of Fukaya algebra associated to any Lagrangian torus fiber satisfy the following property.

\begin{lemma}[Divisor axioms]\label{lem:divisor}
For each fixed $k \geq 0$, $b \in H^1(L; \Lambda_+),$ $x_1, \cdots, x_k \in H^*(L; \Lambda_0)$ and $\fb \in H$, the following equations hold
\begin{equation}\label{eqn:divisor1}
\resizebox{.9\hsize}{!}{$
\ \  \sum_{n_0+\cdots+n_k=n}\fm^{\fb, can}_{k+n,\beta}(b^{\otimes n_0}, x_1, b^{\otimes n_2}, \cdots, b^{\otimes n_{k-1}}, x_k, b^{\otimes n_k})=\frac{1}{n!} \big(\langle \partial \beta, b \rangle\big)^n \fm_k^{\fb, can}(x_1, \cdots, x_k)$}; 
\end{equation}
\begin{equation}\label{eqn:divisor2}
\resizebox{.9\hsize}{!}{$
\ \ \sum_{n_0+\cdots+n_k=n}f^{\fb,can}_{k+n,\beta}(b^{\otimes n_0}, x_1, b^{\otimes n_2}, \cdots, b^{\otimes n_{k-1}}, x_k, b^{\otimes n_k})=\frac{1}{n!}\big( \langle \partial \beta, b \rangle \big)^n f_k^{\fb, can}(x_1, \cdots, x_k).$} 
\end{equation}
\end{lemma}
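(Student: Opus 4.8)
The plan is to deduce both identities from the existence of \emph{forgetful maps} that erase the boundary marked points carrying the degree-one insertion $b$, and then to check that the resulting relations survive the homological perturbation that produces the canonical model. I would first establish the divisor axiom at the de Rham chain level for the operators $\fq_{l,k,\beta}$, hence for $\fm_{k,\beta}^{\fb}$ and for the chain-level pseudo-isotopy operations, and only afterwards transport it to $\fm_{k,\beta}^{\fb,can}$ and $f_{k,\beta}^{\fb,can}$.

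For the chain-level statement I would use the forgetful map
\[
\mathrm{forget}_n \colon \mathcal{M}_{k+1+n,l}(L,\beta,J;\mathbf{q}) \longrightarrow \mathcal{M}_{k+1,l}(L,\beta,J;\mathbf{q})
\]
that drops (and stabilizes) the $n$ ordered boundary marked points at which $b$ is inserted. By \cite{FOOO_bulk,F1} the Kuranishi structures can be chosen compatibly with $\mathrm{forget}_n$, so that $\mathrm{ev}_0$ factors through it and $(\mathrm{ev}_0)_!$ commutes with integration along its fibers; since the harmonic representative of $b$ is closed, the projection formula lets me integrate $\bigwedge_{j} \mathrm{ev}_{w_j}^{*} b$ along the fiber first. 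The sum over all distributions $n_0+\cdots+n_k=n$ assembles this fiber into the simplex of cyclically ordered positions of the $n$ forgotten points, whose integral of $\bigwedge_{j} \mathrm{ev}_{w_j}^{*} b$ equals $\tfrac{1}{n!}\big(\langle \partial\beta, b\rangle\big)^{n}$, because each forgotten point sweeps the boundary loop in class $\partial\beta$ and the ordering simplex contributes the volume factor $\tfrac{1}{n!}$. This gives the de Rham version of \eqref{eqn:divisor1}; the interior bulk insertions $\fb$ are inert here, as $\mathrm{forget}_n$ touches only boundary points and commutes with $\mathrm{ev}^{int}$ and the fiber product \eqref{eqn:fibre_prod}.

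To pass to the canonical model, recall that $\fm_k^{\fb,can}$ is assembled by homological perturbation as a sum over ribbon trees whose vertices carry the de Rham operations, whose internal edges carry the homotopy operator, and whose leaves carry the harmonic (co)inclusion. Since $b$ is harmonic it is fixed by $H^1(L)\hookrightarrow \Omega(L)$, the divisor axiom holds at every vertex, the boundary classes add as $\sum_v \partial\beta_v=\partial\beta$, and the per-vertex scalars $\langle \partial\beta_v, b\rangle$ multiply; summing over all ways of distributing the $n$ copies of $b$ among the vertices reassembles $\tfrac{1}{n!}\big(\langle \partial\beta, b\rangle\big)^{n}$, yielding \eqref{eqn:divisor1}. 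The identity \eqref{eqn:divisor2} is proved the same way: $f_{k,\beta}^{\fb,can}$ is built from the parametrized moduli spaces $\mathcal{M}_{k+1,l}(L_u,\beta,\mathcal{J};\mathbf{q})$ of the pseudo-isotopy, which carry the same boundary forgetful maps compatible with $\mathrm{ev}_{0,t}$ over $L_u\times[0,1]$ (see \cite{F1,T4}); forgetting the $b$-points and integrating along the fiber circles produces the identical factor, while the $t$-direction and the interior insertions are untouched.

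The main obstacle is the compatibility of the Kuranishi structures with $\mathrm{forget}_n$ in our two nonstandard settings—the bulk-deformed moduli spaces and, more delicately, the parametrized pseudo-isotopy moduli spaces—together with the fact that this compatibility is preserved by the homological perturbation defining the canonical model and the homomorphism. Once the fiber of $\mathrm{forget}_n$ is identified with the stabilized configuration of forgotten boundary points and $(\mathrm{ev}_0)_!$ is shown to commute with fiber integration, the remaining bookkeeping of the factor $\tfrac{1}{n!}\big(\langle \partial\beta, b\rangle\big)^{n}$ is routine, and I would invoke \cite{FOOO_bulk,F1,T4} for the requisite compatibility statements.
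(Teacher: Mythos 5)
Your proposal follows essentially the same route as the paper's proof: both establish the chain-level axiom via the forgetful map on boundary marked points with compatible Kuranishi structures, identify the fiber with the standard $n$-simplex to produce the factor $\frac{1}{n!}\big(\langle \partial\beta, b\rangle\big)^n$, and then prove \eqref{eqn:divisor2} by running the identical argument on the parametrized pseudo-isotopy moduli spaces. The only cosmetic difference is that where you unpack the passage to the canonical model by an explicit ribbon-tree/homological-perturbation argument, the paper simply invokes Fukaya's Lemma 13.2 in \cite{F1} and the formulas of \cite{T4}.
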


\begin{proof}
The case when $\fb=0$ is proved by Fukaya in \cite[Lemma 13.2]{F1} and \cite[Lemma 4.4]{T4}. The proof of the Lemma for general $\fb$ is similar. Let $\vec{n}=(n_0, n_1, \cdots, n_k)$ be a tuple of non-negative integers such that $\sum_i n_i=n.$ There is a forgetful map
\begin{equation}\label{eqn:forgetfulmap}
\mathfrak{forget}_{\vec{n},1} \colon  \mathcal{M}_{k+n,l}(L, \beta, J) \rightarrow  \mathcal{M}_{k,l}(L, \beta, J) \text{ for each }l \geq 1, 
\end{equation}
which forgets the marked points labeled by $1,\cdots, n_1,$ $n_1+2, \cdots, n_1+n_2+1, \cdots$ on the boundary.
It is shown in Corollary 5.1 in \cite{F1} that there is a Kuranishi structure on the moduli spaces $ \mathcal{M}_{k+1,l}(L, \beta, J)$ that are compatible with forgetful map $\mathfrak{forget}_{\vec{n},1}$ and permutation group action on the interior marked points. One chooses a system of multisections $\mathfrak{s}, \mathfrak{s}'$ on $ \mathcal{M}_{k+n,l}(L, \beta, J)$ and $\mathcal{M}_{k,l}(L, \beta, J)$ compatible with \eqref{eqn:forgetfulmap}, and denote their zero-sets of $\mathfrak{s}, \mathfrak{s}'$ by $ \mathcal{M}_{k+n,l}(L, \beta, J)^{\mathfrak{s}}$ and $\mathcal{M}_{k,l}(L, \beta, J)^{\mathfrak{s}'}$. One can assume that the interior evaluation maps
$ev^{int}$ is weakly submersive by increasing the dimensions of the obstruction bundles in each Kuranishi neighborhood of $p \in \mathcal{M}_{k,l}(L, \beta, J)$ if necessary.  So there is an induced forgetful maps on the fiber products
\begin{equation}
\mathfrak{forget}_{\vec{n},1}^{\mathfrak{s}} \colon  \mathcal{M}_{k+n,l}(L, \beta, J; \textbf{q})^{\mathfrak{s}} \rightarrow  \mathcal{M}_{k,l}(L, \beta, J; \textbf{q})^{\mathfrak{s}'}. \nonumber
\end{equation}
The preimage of any point $q$ in $\mathcal{M}_{1,l}(L, \beta, J; \textbf{q})^{\mathfrak{s}'}$ is isomorphic the standard $n$-simplex $\Delta^n.$ This implies that
\begin{equation}\label{eqn:forget}
\int_{(\mathfrak{forget^{\mathfrak{s}}_{\vec{n}}})^{-1}(p)} ev^*_{\vec{n}}(b \times \cdots \times b)=\frac{1}{n!}\big(\langle \partial \beta \cap b \rangle \big)^n \text{ for } b \in \Omega^1(L),
\end{equation}
where $ev_{\vec{n}} \colon  \mathcal{M}_{k+n,l}(L, \beta, J) \rightarrow L^n$ is the evaluation map at the boundary marked points $\vec{n}$ and $\frac{1}{n!}$ is the volume of the fiber $(\mathfrak{forget^{\mathfrak{s}}_{\vec{n}}})^{-1}(p)=\Delta^n$. We remark that since $\fb$ was assumed to have even degree and $b \in \Omega^*(L)\hat{\otimes} \Lambda_0 [1]$ is of degree two, there is no extra sign in equation \eqref{eqn:forget} for all $\vec{n}$. This complets the proof of \eqref{eqn:divisor1} for the de Rham model. By Lemma 13.2 in \cite{F1}, equation \eqref{eqn:divisor1} holds in the canonical model as well.\\
\indent For equation \eqref{eqn:divisor2}, one first applies the above arguments to the moduli spaces $ \mathcal{M}_{k+1,l}(L, \beta, \mathcal{J}; \textbf{q})$ defining the pseudo-isotopy and concludes that the operators $\{ \fm^{\fb,t}_{k, \beta}\}$ and $\{ \fc^{\fb,t}_{k,\beta} \}$ satisfy the divisor axiom \eqref{eqn:divisor1} as well. Using formulae (2.6) and (2.7) in \cite{T4}, we see that the induced operation $\{ \fm^{\fb,t, can}_{k, \beta}\}$ and $\{ \fc^{\fb,t, can}_{k,\beta} \}$ satisfy \eqref{eqn:divisor1} in the canonical model $H^*(L_u;\Lambda_0).$ Equation \eqref{eqn:divisor2} then follows from a similar argument as in Lemma 4.4 in \cite{T4}.

\end{proof}

Given the $A_{\infty}$-homomorphism $\hat{f}^{\fb}$, there is an induced map of degree zero on the corresponding weak Maurer-Cartan spaces of positive valuations
\begin{equation}
(F^{\fb, can})_* \colon \mathcal{MC}_+(L_u; \{\fm_k^{\fb} \}_k) \cong H^1(L_u;\Lambda_+) \rightarrow \mathcal{MC}_+(L_{u}; \{\fm_k^{\fb '} \}_k) \cong H^1(L_{u};\Lambda_+), \nonumber \\
\end{equation}
\begin{equation}\label{eqn:map_MC}
(F^{\fb, can})_*(b):=\hat{f}^{\fb, can}(e^b)=f_0^{\fb, can}(1)+f_1^{\fb, can}(b)+f_2^{\fb,can}(b\otimes b)+\cdots, 
\end{equation}

where $e^b:=1 +b +b \otimes b+ b \otimes b \otimes b +\cdots.$ 
We prove next that this map $(F^{\fb, can})_*$ depends only on the homotopy class of the path $\phi$ relative to the end points.  In the subsequent discussions, we will refer the following important consequence of Fukaya's pseudo-isotopies associate to $\phi$ as the \textit{Fukaya's trick}.

\begin{prop} [Fukaya's trick]\label{1086}
Given two Lagrangian torus fibers $L_u$ and $L_{u'}$ of the moment map $\pi \colon X \rightarrow P$ if a toric Fano surface $X$ with $u, u' \in \Int(P)\backslash\mathcal{W}^{holo}$.  If $\phi$ is homotopic relative to end points to $\phi'$ in $\Int{P} \backslash \{p_1,\cdots, p_k\}$, then
\begin{equation} \label{eqn:induced_on_MC}
(F_{\phi}^{\fb, can})_*=(F_{\phi'}^{\fb, can})_* \colon  H^1(L_u;\Lambda_+) \rightarrow H^1(L_{u};\Lambda_+) .
\end{equation}
Equivalently, if $\phi$ is a contractible loop in a small open neighborhood of $u \in \Int(P)\backslash \{p_1,\cdots, p_k\}$ such that $\phi(0)=\phi(1)=u$, then 
\begin{equation} 
(F_{\phi}^{\fb, can})_*=id \colon  H^1(L_u;\Lambda_+)\rightarrow H^1(L_{u};\Lambda_+). \nonumber
\end{equation}
\end{prop}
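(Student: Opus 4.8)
The plan is to derive the statement from the general principle that a homotopy of pseudo-isotopies induces a homotopy of the associated $A_\infty$-homomorphisms, together with the fact that $A_\infty$-homotopic homomorphisms act identically on gauge-equivalence classes of weak Maurer--Cartan elements. First I would record the two formal properties of $\phi \mapsto (F_\phi^{\fb,can})_*$ that make the two displayed assertions equivalent: concatenation of paths corresponds (up to $A_\infty$-homotopy, hence exactly on $\mathcal{MC}$) to composition of the induced maps, and the reversed path induces the inverse, since a pseudo-isotopy is a path-like object in the space of filtered $A_\infty$-structures on the fixed space $\Omega(L_u)\widehat{\otimes}\Lambda_0$ that can be run backwards. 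Granting this, if $\phi$ and $\phi'$ are homotopic rel endpoints in $\Int(P)\setminus\{p_1,\dots,p_k\}$ then $\phi\ast\overline{\phi'}$ is a contractible loop at $u$, so the loop statement gives $(F_{\overline{\phi'}}^{\fb,can})_*\circ(F_\phi^{\fb,can})_*=\mathrm{id}$ and hence $(F_\phi^{\fb,can})_*=(F_{\phi'}^{\fb,can})_*$. Thus it suffices to treat a contractible loop.

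For the contractible loop, I would fix a null-homotopy $\Phi\colon[0,1]^2\to\Int(P)\setminus\{p_1,\dots,p_k\}$ with $\Phi(0,\cdot)=\phi$ and $\Phi(1,\cdot)$ constant, and cover it by a two-parameter family of diffeomorphisms $\phi_{s,t}$ of $X$ with $\phi_{s,0}=\mathrm{id}$. Pushing forward the standard $J$ by $\phi_{s,t}^{-1}$ yields a two-parameter family of tamed almost complex structures, and the corresponding parametrized moduli spaces $\mathcal{M}_{k+1,l}(L_u,\beta,\mathcal{J}_s;\textbf{q})$ assemble into a homotopy of pseudo-isotopies in the sense of \cite{F1,F2}. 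This is exactly where Fukaya's trick enters: $\phi_{s,t}$ carries $J_{s,t}$-holomorphic discs bounding $L_u$ to honest $J$-holomorphic discs bounding $\phi_{s,t}(L_u)$, so the entire family is governed by genuine $J$-holomorphic disc counts along the swept-out fibers, and the only way the counts can jump is by crossing the codimension-one walls $\mathcal{W}^{holo}$. Because $\Phi$ avoids the points $p_i$, Lemmas \ref{799}, \ref{798} and \ref{lem:no_bubbles} apply uniformly over $[0,1]^2$: there are no moment fibers bounding negative Maslov index discs, no generalized Maslov $0$ disc through the constraints appears in codimension zero, and no sphere or disc bubbling occurs in the relevant dimension. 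Hence the parametrized moduli spaces retain their expected dimension and admit consistent Kuranishi structures for which the boundary evaluation maps remain weakly submersive throughout the square.

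From such a two-parameter pseudo-isotopy, the construction of \cite[Section 11]{F1} and \cite[Section 2.7]{T4} produces an $A_\infty$-homotopy between $\hat f^{\fb,can}_{\phi}$ and $\hat f^{\fb,can}_{\mathrm{const}}=\mathrm{id}$ on the canonical model $H^*(L_u;\Lambda_0)$. I would then invoke the standard fact that an $A_\infty$-homomorphism sends weak Maurer--Cartan elements to weak Maurer--Cartan elements and descends to the gauge-equivalence quotient, and that $A_\infty$-homotopic homomorphisms induce the same map on this quotient. Combined with Lemma \ref{lem:weak_MC}, which identifies $\mathcal{MC}^{\fb}_+(L_u)$ with $H^1(L_u;\Lambda_+)$, this gives $(F_\phi^{\fb,can})_*=\mathrm{id}$ for the contractible loop, and the proposition follows via the reduction above. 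The divisor axiom of Lemma \ref{lem:divisor} is kept available throughout by demanding that all perturbations be compatible with the forgetful maps and the $\mathfrak{S}_l$-action on interior marked points.

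The main obstacle is the second step: constructing the homotopy of pseudo-isotopies over the whole square $[0,1]^2$ with all compatibilities holding simultaneously — Kuranishi perturbations compatible with the forgetful maps and the symmetric-group action on interior marked points, keeping $ev_0$ weakly submersive, and restricting correctly to the four edges. The genuinely delicate point is verifying that avoiding the constraint points $p_i$ is precisely what excludes a codimension-zero appearance of generalized Maslov $0$ discs through the $q_i$, so that the parametrized families close up without spurious boundary contributions; this is where the dimension estimates of Lemmas \ref{798} and \ref{lem:no_bubbles} do the essential work.
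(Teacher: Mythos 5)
Your proposal is correct and follows essentially the same route as the paper: both build a two-parameter family of almost complex structures over $[0,1]^2$ from the homotopy of paths, package the resulting parametrized moduli spaces as a pseudo-isotopy between pseudo-isotopies (your ``homotopy of pseudo-isotopies'') in the sense of \cite[Section 14]{F1}, use the fact that this forces the induced maps on weak Maurer--Cartan spaces to coincide together with Lemma \ref{lem:weak_MC}, and close the argument with the dimension estimates (Lemma \ref{798}) ensuring no spurious boundary contributions since the homotopy avoids $p_1,\dots,p_k$. The only cosmetic difference is the direction of the reduction between the two equivalent statements (you reduce to the loop case, the paper to the path case), which is immaterial.
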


\begin{proof} Since the above two statements are equivalent by obvious reasons, it suffices to prove the first statement \eqref{eqn:induced_on_MC}. Given two pseudo-isotopies on the $G$-gapped filtered $A_{\infty}$-structures on the cohomology $H^*(L_u; \Lambda)$, there is a notion of pseudo-isotopy between pseudo-isotopies defined in \cite[Definition 14.1]{F1}. One important property of such pseudo-isotopies between pseudo-isotopies, shown in \cite[Section 14]{F1}, is that the $A_{\infty}$-functors that they define induce the same map on the weak Maurer-Cartan moduli spaces
\begin{equation}
(F_{\phi}^{\fb, can})_*=(F_{\phi'}^{\fb, can})_* \colon  \mathcal{MC}_+^{\fb}(L_u;  \{ \fm_k^{\fb }\}) \rightarrow   \mathcal{MC}_+^{\fb}(L_{u}; \{ \fm_k^{\fb '}\}).
\end{equation}
By Lemma \ref{lem:weak_MC},  it suffices to construct a pseudo-isotopy between the pseudo-isotopies defined by two different paths $\phi$ and $\phi'.$ First, we choose a homotopy $\phi_s$ relative to the end points between the two paths $\phi$ and $\phi'$, and choose  a smooth map $\mathcal{J}$ from $[0,1]^2$ to the space of $\omega$-compatible almost complex structures $ \mathcal{J}(M, \omega)$ such that $\mathcal{J}([t,0])=J_{t}$, $\mathcal{J}([t,1])=J_t',$ where $J_t$ and $J_t'$ are the almost complex structures that define the pseudo-isotopy in \eqref{eqn:J_t} respectively. It is proved in \cite[Section 14]{F1} that the moduli space
\begin{equation}\label{978}
 \mathcal{M}_{k+1,l}(L_u, \beta, \mathcal{J};\textbf{q})= \bigcup_{(t,s) \in [0,1]^2} \{ (t,s)\} \times \mathcal{M}_{k+1,l}(L_u, \beta, J_{t,s};\textbf{q}), 
\end{equation} 
defines a pseudo-isotopy of pseudo-isotopies by a construction which    is analogous to equation \eqref{eqn:pseudo-iso}. From Lemma \ref{798}, there are no holomorphic discs of negative generalized Maslov discs for generic point constraints and there exists no extra boundary terms in the right hand side of (\ref{978}). 

\end{proof}

Now we define the $k$th order bulk-deformed potential in our setup as follows.
\begin{defn}  \label{defn:bulk_potential}
Let $\fb=t_1q_1+\cdot+t_k q_k$ and $R_k=\C[t_i]/(t_i^2=0\mathbb{|}i=1,\cdots, k).$
\begin{enumerate}
 \item[(1)] The $k$th order bulk-deformed potential associated to $L_u$ is given by
\begin{equation}
W^{q_1,\cdots, q_k}_k (u)=\sum_{l \leq k} \sum_{|I|=l}\sum_{\beta}\mathfrak{m}_{0, \beta}^{\fb, can} (1) z^{\partial \beta}t_I \ \in \Lambda[ z_1{^\pm}, z_2^{\pm} ]\otimes R_k, \nonumber
\end{equation}
where  $t_I:=t_{i_1}\cdots  t_{i_l}$ for some $I=\{i_1, \cdots, i_l \} \subset \{ 1, 2, \cdots, k\}$. 
\item[(2)] We define the generalized Maslov index $\mu' (\beta)$ of a disc in the class $\beta$ with $l$ interior marked points by the formula
$$ \mu' (\beta) = \mu(\beta) - 2l.$$
Then the $k$th order bulk-deformed potential can be viewed as the count of generalized Maslov $2$ discs with $l$ point constraints for all $l \leq k$.
%
%
\end{enumerate}

\end{defn}

\begin{remark} $\quad$
\begin{enumerate}
\item[(1)]
By Gromov-Ulenbeck compactness theorem, the bulk-deformed superpotential over $R_k$ is a finite sum when $X$ is toric Fano. Therefore,  the evaluation $W_k^{q_1,\cdots, q_k}(u)|_{T=1}$ is well-defined. We will mostly omit the Novikov variable, or set $T=1,$ for notational simplicity in Section \ref{sec:trop_sec}.
%
\item[(2)]
Let $p: \Omega^*(L_{u}) \to H^* (L_{u})$ be the harmonic projection used to take canonical model. Then we have
\begin{equation}\label{eqn:harmonic1}
p (\mathfrak{m}^{\fb}_0 (1)) = \mathfrak{m}_0^{\fb, can} (1)
\end{equation}
from the construction of the canonical model.
\end{enumerate}
\end{remark}

From the $A_{\infty}$-homomorphism relation \eqref{eqn:compatiblity}, one can deduce that for each $k$, the $k$th order bulk-deformed potential satisfies the following equation,
   \begin{equation}\label{eqn:comptwf}
 F_* \circ W_k = W_k \circ F_* \colon H^1(L_u;\Lambda_+) \otimes_{\C}R_k \rightarrow H^1(L_{u};\Lambda_+) \otimes_{\C}R_k, 
   \end{equation}
   where $F_*:=(F^{\fb, can})_*$ is the $k$th order induced map on the weak Maurer-Cartan space defined in equation \eqref{eqn:map_MC} for $\fb=t_1q_1+\cdots+t_k q_k$.
   By setting $T=1,$ the corresponding statement over $R_k$ also holds. This implies that the bulk-deformed potential is preserved under wall-crossing or cluster transformations as desired. Our next goal is to compute $W_k(u)$ inductively via the wall-crossing techniques. To illustrate the inductive algorithm and the necessity of our tropical-holomorphic correspondence theorem \ref{thm:correspondence}, we will compute the low order contributions to $W_n(u)$ and the holomorphic wall structure $\mathcal{W}^{holo}$ explicitly in Section \ref{sec:wall_crossing_Wk}.

\section{Tropical curves in Toric Fano Surfaces} \label{sec:trop_sec}
 In this section, we recall the definitions of tropical curves as well as tropical counting invariants in toric Fano surfaces. Let $X$ be a toric Fano surface associated with a moment polytope $\Sigma$. We first recall the notation of tropical curves as follows.
  \begin{defn} 
   A tropical rational curve in $\mathbb{R}^2$ is a triple $(h,T,w)$ satisfying the following properties. 
       \begin{enumerate}
          \item[(i)] $T$ is a tree that possibly contains unbounded edges. The set of vertices is denoted by $T^{[0]}$, and the set of edges are denoted by $T^{[1]}$. 
          \item[(ii)] Every vertex is trivalent. 
          \item[(iii)] $h:T\rightarrow M_{\mathbb{R}}$ is a map such that $h(e)$ is an embedding of affine line segment or ray if $e \in T^{[1]}$ is bounded or unbounded respectively. 
          \item[(iv)] There is a map $w:T^{[1]}\rightarrow \mathbb{N}$ assigning weights to edges such that the balancing condition holds: for every vertex with adjacent edges $e_1,e_2,e_3,$ one has that 
              \begin{align*}
                w(e_1)v(e_1)+w(e_2)v(e_2)+w(e_3)v(e_3)=0,
              \end{align*}
          where $v(e_i)$ is the primitive vector tangent to $h(e_i)$ which is pointed away from $v$.
       \end{enumerate}
Furthermore, the triple $(h,T,w)$ is called a tropical curve of $X$ if every unbounded edges is parallel to a $1$-cone in $\Sigma$.
\end{defn}


\begin{defn} Given a tropical rational curve $(h, T, w).$ We say that
  \begin{enumerate}
      \item A marked tropical rational curve in $X$ is a tropical rational curve $(h,T,w)$ with a marking 
           \begin{align*}
            \epsilon \colon  I \longrightarrow T^{[1]}_{\infty},          \end{align*} where $I$ is some index set and  $T^{[1]}_{\infty}$ is the set of the unbounded edges. 
      \item The degree $\Delta$ of a marked tropical rational curve is a map 
        \begin{align*}
         \Delta \colon  I\longrightarrow M_{\mathbb{R}}\\
        \ \ \ \ \ \      i \mapsto w(e_i)u_i,
        \end{align*} where $u_i$ is the primitive vector of a $1$-cone of $\Sigma$.
     \item Denote $[\Delta]\in H_2(X,\mathbb{Z})$ to be the unique curve class such that 
     $$[\Delta]\cdot D_v=\sum_{e_i\in T^{[1]}_{\infty}:h(e_i) \parallelsum  v}w(e_i)$$
      for every vector $v$ generating a $1$-cone in $\Sigma$.
     \item We denote by $\mbox{Aut}(\Delta)$ the automorphism group of the marking $\epsilon$.  
      \end{enumerate}
\end{defn}
The degree $\Delta$ can be viewed as a refinement of a curve class. 
Algebraically, one can understand a tropical curve as the corner locus of a Laurent polynomial in the tropical semi-ring. Geometrically, tropical curves are the Gromov-Hausdorff limit of holomorphic curves at certain adiabatic limit in the following sense. Mikhalkin \cite{M2} introduced the $1$-parameter family of diffeomorphisms
  \begin{align*}
     H_t:(\mathbb{C}^*)^2&\longrightarrow (\mathbb{C}^*)^2 \\
        (x,y)&\mapsto   \left(|x|^{\frac{1}{\log{t}}}\frac{x}{|x|},|y|^{\frac{1}{\log{t}}}\frac{y}{|y|} \right),
  \end{align*} 
 which induces a $1$-parameter family of almost complex structures $J_t$ via pulling back the standard complex structure by $H_t$. Fixing the suitable torus invariant K\"ahler form, the volume of the torus fiber approaches zero  as $t\rightarrow \infty$. This is coherent with the SYZ interpretation of the large complex structure limit \cite{GW}\cite{KS4}. 
 
 One says that a Riemann surface $V_t$ is $J_t$-holomorphic if and only if $V_t=H_t(V)$ for some holomorphic curve $V\subseteq (\mathbb{C}^*)^2$ with respect to the standard complex structure. We denote by $Log$ the torus fibration, which is the same as the toric moment map up to a Legendre transformation, given by
     \begin{align*}
         Log:(\mathbb{C}^*)^2&\longrightarrow \mathbb{R}^2 \\ 
              (x,y)&\mapsto (\log{|x|},\log{|y|}).
     \end{align*} 
One of the rudimental results in tropical geometry by Mikhalkin \cite{M2} states that there is a bijection between the set of tropical curves in the adiabatic limit and that of sequences of $J_t$-holomorphic curves for $t \gg 0.$
  \begin{prop}\cite{M2}
     Let $V_{\infty}$ be a Gromov-Hausdorff limit of a sequence of $J_t$-holomorphic curve. Then $Log(V_{\infty})$ is a tropical curve. Conversely, a tropical curve can be realized as the $Log$-image of a Gromov-Hausdorff limit of a sequence of $J_t$-holomorphic curve. 
  \end{prop}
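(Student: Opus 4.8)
The plan is to reduce the statement to the classical theory of amoebas and their tropical (Maslov) dequantization. The crucial observation is that the Mikhalkin diffeomorphism interacts with the $Log$ map by a pure rescaling: since the modulus of the first coordinate of $H_t(x,y)$ is $|x|^{1/\log t}$ (and likewise $|y|^{1/\log t}$ for the second), one has $Log \circ H_t = \frac{1}{\log t}\, Log$. Consequently, if $V_t = H_t(V)$ is $J_t$-holomorphic for a standard holomorphic curve $V \subseteq (\C^*)^2$, then $Log(V_t) = \frac{1}{\log t}\,\mathcal{A}(V)$ is the rescaled amoeba, where $\mathcal{A}(V) := Log(V)$. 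Thus both assertions become statements about the Hausdorff limit of rescaled amoebas of a sequence of algebraic curves. Before analyzing that limit, I would invoke Gromov compactness in the compact toric variety $X$, equipped with its fixed torus-invariant Kähler form, to guarantee that a subsequence of the $J_t$-holomorphic curves converges to a nodal limit, so that the Gromov-Hausdorff limit $V_\infty$ exists and $Log(V_\infty)$ is well-defined and closed.

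For the forward direction I would use the Ronkin function $N_f(u) = \frac{1}{(2\pi i)^2}\int_{Log^{-1}(u)} \log|f|\, \frac{dx}{x}\wedge\frac{dy}{y}$ attached to a defining Laurent polynomial $f$ of $V$. This function is convex, its gradient takes values in the Newton polytope of $f$, and it is affine with integral slope on each connected component of the amoeba complement, the slope recording the order of that component. The rescaled Ronkin functions converge to a convex piecewise-linear function $\varphi(u) = \max_a(\langle a, u\rangle + c_a)$ with $a$ ranging over lattice points of the Newton polytope, and $Log(V_\infty)$ is exactly the corner (non-smooth) locus of $\varphi$. This exhibits $Log(V_\infty)$ as a one-dimensional rational polyhedral complex. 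The balancing condition at each vertex follows from convexity together with integrality of the slopes: the regions meeting at a vertex correspond to a dual polygon in the Newton polytope, and the primitive edge directions weighted by the lattice lengths of the dual edges sum to zero. Finally, because $X$ is toric Fano, the asymptotic directions of the unbounded edges are outer normals to edges of the moment polytope, hence parallel to the $1$-cones of $\Sigma$, so $Log(V_\infty)$ is a tropical curve of $X$ in the sense of the preceding definition.

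For the converse I would run the dequantization construction in reverse. Given a tropical curve $\Gamma$, write it as the corner locus of a tropical polynomial $\varphi(u) = \max_a(\langle a, u\rangle + c_a)$ supported on lattice points $a$ of the Newton polytope dual to $\Sigma$, and set $f_t = \sum_a \lambda_a\, t^{c_a} z^a$ for generic unit coefficients $\lambda_a$. Taking $V^{(t)} = \{f_t = 0\}$ and $V_t = H_t(V^{(t)})$, the identity $Log(V_t) = \frac{1}{\log t}\,\mathcal{A}(f_t)$ together with the substitution $u = \frac{1}{\log t}\,Log(z)$ shows that, after rescaling, the monomial $\lambda_a t^{c_a} z^a$ has log-modulus comparable to $(\log t)(\langle a,u\rangle + c_a)$; the amoeba concentrates where two such terms tie, i.e.\ on a neighborhood of $\Gamma$, and one extracts a Gromov-Hausdorff limit $V_\infty$ with $Log(V_\infty)=\Gamma$.

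I expect the main obstacle to be the two precise Hausdorff estimates underlying the convergence statements. The easy inclusion, that the rescaled amoeba stays within an $\varepsilon$-neighborhood of the corner locus, follows from the domination estimate that away from a tie one monomial of $f_t$ strictly dominates the rest in modulus, so $f_t$ cannot vanish there. The delicate inclusion is the reverse: one must show the amoeba actually reaches a neighborhood of every point of $\Gamma$, equivalently that near each open edge, where exactly two monomials are comparable in modulus, the polynomial genuinely has zeros on the corresponding circle fibers. This requires an argument-principle or Rouché-type count controlling the phases rather than only the moduli, and it is also what pins down the integral weights via winding numbers. Establishing this lower bound uniformly as $t\to\infty$, together with the balancing and weight bookkeeping in the forward direction, is the technical heart of the argument; the remaining steps are formal once the amoeba-to-spine convergence is in hand.
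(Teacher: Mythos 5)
The paper gives no proof of this proposition at all---it is quoted directly from Mikhalkin \cite{M2}---and your reconstruction via the identity $Log \circ H_t = \frac{1}{\log t}\, Log$, rescaled amoebas, Ronkin functions, and a Rouch\'e-type lower bound for the reverse Hausdorff inclusion is precisely the argument of that cited source, so your approach coincides with the paper's (i.e.\ with Mikhalkin's). One small correction: your appeal to the Fano condition to force the unbounded edges to be parallel to the rays of $\Sigma$ is unfounded---the paper's own example of the doubled Maslov-index-four disc in $\mathbb{P}^2$ tropicalizes with unbounded edges \emph{not} parallel to any $1$-cone, since limits of such curves need not be torically transverse---but this does not affect correctness, because the proposition only asserts that $Log(V_\infty)$ is a tropical curve in $\R^2$, not a tropical curve of $X$ in the paper's stronger sense.
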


\subsection{Tropical discs and tropical descendant invariants} \label{sec:trop_discs}
We next recall the analogous notions of tropical disc counting following \cite{G7} and \cite{N2}. Such disc counting invariants give rise to the notation of the tropical superpotential defined later in this subsection. 
 
%
\begin{defn}
   A tropical disc of $X$ is a triple $(h,T,w)$ with the following properties. 
    \begin{enumerate}
       \item[(i)] $T$ is a rooted tree with a unique root $x$ which may contain unbounded edges. The set of vertices  and the set of edges are denoted by $T^{[0]}$ and $T^{[1]}$, respectively. 
       \item[(ii)] Every vertex other than $x$ is trivalent. 
       \item[(iii)] $h:T\rightarrow M_{\mathbb{R}}$ restricted to an edge $e$ is an embedding of affine line segment or ray if $e$ is bounded or unbounded respectively. 
       \item[(iv)] $w:T^{[1]}\rightarrow \mathbb{N}$ be the weights on edges such that the balancing condition holds: for every vertex $v\neq x$ with adjacent edges $e_1,e_2,e_3,$ one has that
           \begin{align*}
             w(e_1)v(e_1)+w(e_2)v(e_2)+w(e_3)v(e_3)=0,
           \end{align*}
       where $v(e_i)$ is the primitive vector tangent to $h(e_i)$ and pointed away from the vertex $v$.
       \item[(v)] $u=h(x)\in \mathbb{R}^2$ is called the end of the tropical disc. 
       \item[(vi)] If $e$ is an unbounded edge, then $h(e)$is parallel to a $1$-cone of $\Sigma$. 
    \end{enumerate}
 \end{defn}
 There are different notions of equivalences for tropical discs.
 \begin{defn}   
 Two tropical discs $(h_i,T_i, w_i)$ are called 
   \begin{enumerate}
     \item isomorphic if there exists a diffeomorphism $f:T_1\rightarrow T_2$ such that $h_2\circ f=h_1$ and $w_2(f(e))=w_1(e)$ for each $e\in T_1^{[1]}$. 
     \item of the same type if there exists a diffeomorphism $f:T_1\rightarrow T_2$ such that $h_2(f(e))$ is parallel to $h_1(e)$ and $w_2(f(e))=w_1(e)$ for each $e\in T_1^{[1]}$.
   \end{enumerate}   
\end{defn}
To define the analogous notion of Maslov index $2$ disc in tropical geometry, one considers a tropical disc $(h,T,w)$ ending on $u \in \R^2$ in a toric Fano manifold which has a unique unbounded edge.  As the direction of the unbounded edge determines a toric divisor in $X$, one can define the relative class $[h]\in H_2(X,L_{u})$ of $h$ by that of the corresponding Maslov two holomorphic disc considered in \cite{CO}. 

For an arbitrary tropical disc $(h,T,w)$ ending on $u$, one can define the class of $h$, inductively as follows. Let $e$ be the edge adjacent to the root $x$ such that $h(x)=u$, and $x'$ be the other vertex adjacent to $e$. 
Note that erasing $e$ from $T$ gives several sub-tropical disc with end on $L_{h(x')}$, say $\{(h_i,T_i,w_i)\}_i$.
Each sub-tropical disc $(h_i,T_i,w_i)$ has an associated relative class $H_2(X,L_{h(x')})$ by induction on the number of vertices of a tropical discs. Then we define 
$$[h]\in H_2(X,L_u)$$ 
to be the parallel transport of $\sum_i [h_i]$ along $h(e)$. Maslov indices for tropical discs can be defined similarly as in the holomorphic case below.
\begin{defn}\label{def:tropMI}
   Given a tropical disc $(h,T,w)$, its Maslov index $\mbox{MI}(h)$ is defined to be the twice of the sum of weights of unbounded edges.
\end{defn}

\begin{defn}\label{def:mikmult}
   Given a tropical disc $(h,T,w)$ with end at $u\in \mathbb{R}^2$ and only trivalent vertices except the root. Let $v\in T^{[0]}$ be a trivalent vertex with adjacent edges $e_1,e_2,e_3$. Then the (Mikhalkin) weight at $v$ denoted as $\mbox{Mult}_v$ is given by 
     \begin{align*}
    \mbox{Mult}_v:=    |w(e_1)v(e_1)\wedge w(e_2)v(e_2)|\in \wedge^2 T_{\mathbb{Z}}\mathbb{R}^2\cong \mathbb{Z}.
     \end{align*} 
     The weight $\mbox{Mult}(h)$ of the tropical disc $(h,T,w)$ is defined to be 
     \begin{align*}
       \mbox{Mult}(h)= \prod_{v\in T^{[0]}, val(v)=3}\mbox{Mult}_v.
     \end{align*} 
\end{defn}
 
 Fo $u$ in the log base $\R^2,$ we write $L_u$ for the Lagrangian torus fiber $\pi^{-1}(u)$.  Since our Lagrangian fibration is topologically trivial, we can choose a basis $e_1,e_2$ in $H^1(L_u,\mathbb{Z})$ compatibly with all $u$. Given $[\partial \beta] \in H_1(L_u,\mathbb{Z})$, set 
   \begin{align*}
     z^{[\partial \beta]}=z_1^{\langle[\partial\beta],e_1\rangle}z_2^{\langle[\partial\beta],e_2\rangle},
   \end{align*} where $\langle \,\, , \,\, \rangle$ is the natural pairing between $H_1(L_u, \Z)$ and $H^1(L_u, \Z).$ Now we are ready to define the tropical analogue of the Hori-Vafa potential as follows.

 \begin{defn} For any $u\in \mathbb{R}^2$,
     the tropical superpotential is defined to be 
     \begin{align} \label{1077}
        W(u):=\sum_{h} \mbox{Mult}(h)z^{\partial [h]},
     \end{align} where the summation is over all tropical discs of Maslov index two with end on $u$. 
      \end{defn}
 Cho-Oh \cite{CO} proved that (\ref{1077}) indeed coincide with the superpotential that counts Maslov index two holomorphic discs in Floer theory. 
 Gross \cite{G7} further considered the bulk deformation of the tropical superpotential, which is a tropical analogue of the bulk-deformed potential that we defined in Section \ref{sec:pseudo_iso}.
 \begin{defn}
    Let $p_1,\cdots,p_k\in\mathbb{R}^2$ be $k$ generic points.
    For a tropical disc $(h,T,w)$ with end $u$ generic, we define the following:
     \begin{enumerate}
       \item $p_h:=\left\{i\in \{1,\cdots,k\} \mid p_i\in \{h(v)\mid v\in T^{[0]}, val(v)=2\} \right\}$.
       \item  The generalized Maslov index $\mbox{MI}'(h)$ of a tropical disc $h$ is defined to be 
           \begin{align*}
               \mbox{MI}'(h)=\mbox{MI}(h)-2|p_h|.
           \end{align*}
     \end{enumerate}
 \end{defn}
 To avoid contributions from complicated tropical discs to the wall-crossing, Gross  \cite{G7} restricted the coefficients ring to
   \begin{align*}
     R_k=\mathbb{C}[t_1,\cdots, t_k]/(t_i^2=0).
   \end{align*}
   The use of $R_k$, especially the condition $t_i^2=0$, is crucial in our computations of the bulk-deformed potential, since it only records the \textit{first order information} with respect to each of $t_i$'s in the wall-crossing map defined in Definition \ref{1001} below. We now define the bulk-deformed potential $W_{k}(u)$ tropically.
 
 \begin{defn}\label{1001}
    Let $p_1,\cdots,p_k,u\in\mathbb{R}^2$ be in generic positions. The bulk deformed tropical superpotential $W_k(u)$ is defined by 
      \begin{align*}
         W_k^{trop}(u)=\sum_{h} \mbox{Mult}(h)z^{\partial[h]}t_h\in \mathbb{Q}[z_1,z_2]\otimes R_k,
      \end{align*} where $t_h=\prod_{i\in p_h}t_i$ and the summation is over all rigid generalized Maslov index two tropical discs with end on $u$.      
 \end{defn}

By the work of Nishinou \cite{N2} on tropical disc counting, we can conclude that the above expression is well-defined and it defines a Laurent polynomial in $R_k[z_1^{\pm}, z_2^{\pm}]$ for each $k.$

 \begin{prop} \label{1035}
   (\cite{N2} Proposition 3.5)
    There are only finitely many tropical discs with end on $u$ of generalized Maslov index $2$ with respect to a given configuration $p_1,\cdots,p_k$. Therefore, $W_k(u)$ is a Laurent polynomial. Moreover, all the generalized Maslov index two tropical discs are trivalent. 
 \end{prop}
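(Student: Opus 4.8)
The plan is to prove finiteness and the trivalence claim in one stroke by a dimension count over combinatorial types, following the strategy of Nishinou \cite{N2}. First I would fix a \emph{combinatorial type} of a tropical disc ending on $u$: the rooted tree $T$, the weights $w$, and the direction of every edge, but neither the lengths of the bounded edges nor the position of the root. For generic $p_i$, a marked point lying on the image is an interior point of an edge, so I record the passing-through conditions as constraints on edges rather than as genuine vertices; the ``reduced'' disc then has only the univalent root $x$ together with interior vertices of valences $r_1,\dots,r_n\ge 3$. Since each edge direction is fixed by the type, the image is determined by the positions $h(v)\in\R^2$ of the $n+1$ finite vertices subject to one colinearity equation per bounded edge; using the tree relation $E_b=n$ this gives a moduli space of tropical discs of the given type of dimension $2(n+1)-n=n+2$.

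Next I would impose the geometric constraints and couple them with the Maslov index. Requiring $h(x)=u$ cuts $2$ dimensions, and requiring the image to pass through each of the $l:=|p_h|$ chosen marked points cuts $1$ dimension apiece (a point lying on a fixed-direction line is a single equation), leaving expected dimension $n-l$. The valence count for a tree, $1+\sum_j r_j=2E_b+E_\infty$ with $E_b=n$, yields
\[
E_\infty = 1+\sum_{j=1}^{n}(r_j-2),
\]
so that $E_\infty-n=1+\sum_j(r_j-3)\ge 1$, with equality iff every interior vertex is trivalent. Because the weights are positive, $\sum_{e\in E_\infty}w(e)\ge E_\infty$, while $\mathrm{MI}'(h)=2$ reads $\sum_{e\in E_\infty}w(e)=l+1$. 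Combining,
\[
l+1=\sum_{e\in E_\infty}w(e)\ge E_\infty\ge n+1,
\]
forces $l\ge n$; but a contributing disc needs expected dimension $n-l\ge 0$, so $l=n$. Every inequality then becomes an equality: each interior vertex is trivalent (no vertex of valence $\ge 4$), every unbounded weight equals $1$, and $E_\infty=n+1$ with $V_3=n=l$. This simultaneously establishes the trivalence assertion and shows each such disc is rigid.

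Finiteness follows once the combinatorial types are bounded. Since $\sum_{e\in E_\infty}w(e)=l+1\le k+1$, the degree $\Delta$ satisfies $[\Delta]\cdot D_v=\sum_{h(e_i)\parallelsum v}w(e_i)\le k+1$ for every $1$-cone $v$, so the Fano condition on $X$ leaves only finitely many curve classes, each with $V_3=l\le k$ vertices and unbounded directions drawn from the finitely many $1$-cones of $\Sigma$; hence there are finitely many types. For generic $(u,p_1,\dots,p_k)$ each rigid type determines its finitely many (by the tree structure, essentially unique) discs, so $W_k^{\mathrm{trop}}(u)=\sum_h \mathrm{Mult}(h)\,z^{\partial[h]}t_h$ is a finite sum and therefore a Laurent polynomial in $R_k[z_1^{\pm},z_2^{\pm}]$.

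The main obstacle is the genericity/transversality input rather than the bookkeeping: one must verify that for generic $u$ and $p_i$ the constraint equations are independent — so that the actual dimension equals the expected $n-l$ and a negative expected dimension genuinely means emptiness — that no marked point is forced onto an interior vertex (which would alter the valence analysis), and that distinct types neither coincide nor produce positive-dimensional families. This transversality, carried out in detail in \cite{N2}, is precisely what legitimizes treating each marked-point incidence as a single independent linear condition and underlies both the rigidity and the finiteness conclusions.
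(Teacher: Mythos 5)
The paper itself offers no argument for this proposition --- it is imported wholesale as Proposition 3.5 of Nishinou \cite{N2} --- so there is no internal proof to compare against line by line; what you have done is reconstruct the argument that the citation stands in for, and your reconstruction is correct and is essentially the dimension count of \cite{N2} (also used by Gross \cite{G7}). Your bookkeeping is sound: parametrizing a fixed combinatorial type by the $n+1$ vertex positions modulo one collinearity equation per bounded edge gives moduli of dimension $2(n+1)-n=n+2$ (equivalently, root position plus $n$ edge lengths); the conditions $h(x)=u$ and the $l$ incidence conditions cut this to expected dimension $n-l$; and the identity $\mathrm{MI}'(h)=2$, i.e.\ $\sum_{e}w(e)=l+1$ over unbounded edges, played against $\sum_{e}w(e)\geq E_\infty\geq n+1$, forces $l\geq n$. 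Combining with $n-l\geq 0$ saturates every inequality at once, which simultaneously yields trivalence, weight-one unbounded edges, and rigidity; finiteness then follows because the total unbounded weight is bounded by $k+1$, leaving finitely many degrees $\Delta$ and finitely many combinatorial types, each contributing at most one disc (an affine constraint subspace meets the open convex polyhedron of a type transversally in dimension zero). You are also right to isolate the genericity statement --- that for generic $(u,p_1,\dots,p_k)$ the incidence conditions are independent, negative expected dimension implies emptiness, and marked points avoid trivalent vertices --- as the one ingredient borrowed from \cite{N2}; that is precisely the content the paper's citation relies on, so flagging rather than proving it matches the paper's own level of rigor. Two minor remarks: the Fano hypothesis is not actually needed for the finiteness of degrees (bounded intersection with each toric divisor together with the finitely many rays of $\Sigma$ suffices), and your device of recording marked points as incidence conditions on edges rather than as bivalent vertices is equivalent to the paper's convention, since balancing at a bivalent vertex forces the two adjacent edges to be parallel with equal weights.
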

 

The tropical bulk-deformed potential $W_k^{trop}(u)$ shows discontinuity when $u$ goes across a certain 1-dimensional skeleton in $\R^2$ called the tropical wall defined below.  In this case, two potentials on the other sides of the wall differ by a transformation, which is referred as a slab function in the Gross-Siebert program \cite{GS1}. These functions are tropical analogues of the wall-crossing maps in Floer theory.

\begin{defn} \label{1033}
Let $(h,T,w)$ be a tropical disc ending at $u\in \mathbb{R}^2$ with generalized Maslov index $2$. Let $e$ be the edge adjacent to the root. A tropical wall refers to the extension of $f(e)$ in the direction of $u$ to infinity. To each tropical wall $\mathfrak{d}$, we associate a slab function 
 \begin{align*}
 f_{\mathfrak{d}}=1+\mbox{Mult}(h)z^{\partial[h]}t_h \in \mathbb{Q}[z_1,z_2]\otimes R_k,
 \end{align*} and an symplectomorphism 
 \begin{align*}
   \mathcal{K}_{\mathfrak{d}}:\mathbb{Q}[z_1,z_2]&\rightarrow \mathbb{Q}[z_1,z_2]\\
    z_i&\mapsto z_i f_{\mathfrak{d}}^{\langle e_i,\partial [h]\rangle},
 \end{align*} which preserves the standard holomorphic symplectic $2$-form $\frac{dz_1}{z_1}\wedge \frac{dz_2}{z_2}$.
The tropical wall structure $\mathcal{W}^{trop}$ associated to $p_1,\cdots, p_k$ is the union of all tropical walls.    \end{defn}

As a consequence, the tropical bulked potential $W_k^{trop}(u)$ in Definition \ref{1001} is well-defined only when $u$ is in the complement of the wall structure.

   Given two tropical walls $\mathfrak{d}_1,\mathfrak{d}_2,$ it can be verified that $\mathcal{K}_{\mathfrak{d}_1}\mathcal{K}_{\mathfrak{d}_2}=\mathcal{K}_{\mathfrak{d}_2}\mathcal{K}_{\mathfrak{d}_1}$ if and only if $\mathfrak{d}_1,\mathfrak{d}_2$ are parallel. 
   To study the analogue of the Fukaya's trick that we proved in Proposition \ref{1086}, we will consider a generic tropical loop on the log base $\R^2$ defined as follows.
   
   \begin{defn}
   We say a loop $\phi$ in $\mathbb{R}^2$ is generic if 
 \begin{enumerate}
   \item it avoids all the intersections of the tropical walls, and
   \item  every intersection of $\phi$ with the tropical wall $\mathfrak{d}$ is transversal.
 \end{enumerate}
 \end{defn}
  Such a generic loop $\phi$ induces a symplectomorphism 
   \begin{align*}
      F_{\phi}:=\prod_{\mathfrak{d}:\mathfrak{d}\cap \phi\neq \phi} \mathcal{K}_{\mathfrak{d}}^{\epsilon_{\mathfrak{d}}},
   \end{align*} where the product order is with respect to the path and $\epsilon_{\mathfrak{d}}=\mbox{sgn}\langle \phi'(t_{\mathfrak{d}}) , \partial[h] \rangle$. 
   The following illustrates the wall-crossing phenomenon of the tropical discs. 
 \begin{lemma} \label{1078} \cite{GPS}
   Assume that there are two tropical walls $\mathfrak{d}_1,\mathfrak{d}_2$ intersect at $p$ transversely, and no other walls passing through $p$ (which can be always achieved by requiring the point constraints configuration $p_1,\cdots, p_k$ to be generic). Let $\phi$ be a small loop around $p$, then $F_{\phi}=\mbox{id}$ if and only if there exists exactly one tropical wall $\mathfrak{d}$ emanating from $p$ with $f_{\mathfrak{d}}=f_{\mathfrak{d}_1}f_{\mathfrak{d}_2}$.
 \end{lemma}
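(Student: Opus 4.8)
The plan is to recast the statement as the elementary two-wall scattering computation inside the group of formal symplectomorphisms of $(\C^*)^2$ over $R_k$, exploiting the relation $t_i^2=0$ to force all higher-order corrections to vanish. First I would write $f_{\mathfrak{d}_i}=1+a_i$ with $a_i=\mathrm{Mult}(h_i)\,z^{\partial[h_i]}\,t_{h_i}$, and observe that since $t_{h_i}$ is a squarefree monomial in the $t_j$, one has $a_i^2=0$ in $R_k[z_1^\pm,z_2^\pm]$. Hence $\log\mathcal{K}_{\mathfrak{d}_i}$ is a genuine derivation $D_i$ with $D_i^2=0$ and $\mathcal{K}_{\mathfrak{d}_i}=\exp(D_i)$, where explicitly $D_i(z^m)=\langle m,\partial[h_i]\rangle\,a_i\,z^m$, the pairing $\langle\,,\,\rangle$ being the skew (determinant) pairing for which $\mathcal{K}_{\mathfrak{d}_i}$ preserves $\tfrac{dz_1}{z_1}\wedge\tfrac{dz_2}{z_2}$. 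The key structural point is that $D_1,D_2$ together with their single bracket span a $2$-step nilpotent Lie algebra: any iterated bracket repeating $D_i$ carries a factor $t_{h_i}^2=0$, so only $[D_1,D_2]$ survives and Baker--Campbell--Hausdorff terminates at first order.

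Next I would compute the monodromy $F_\phi$ of the small loop directly. Because $\mathfrak{d}_1$ and $\mathfrak{d}_2$ meet $p$ transversely, $\phi$ crosses each of them on both sides, contributing $\mathcal{K}_{\mathfrak{d}_i}$ and $\mathcal{K}_{\mathfrak{d}_i}^{-1}$ with the orientation signs $\epsilon_{\mathfrak{d}}$ fixed above; the first-order (single $t$) part of $\log F_\phi$ therefore cancels identically, and what remains is governed entirely by the bracket. The central calculation is the tropical vertex identity
\begin{equation*}
[D_1,D_2](z^m)=\mathrm{Mult}(h_1)\mathrm{Mult}(h_2)\,\langle\partial[h_2],\partial[h_1]\rangle\,\langle m,\partial[h_1]+\partial[h_2]\rangle\,t_{h_1}t_{h_2}\,z^{m+\partial[h_1]+\partial[h_2]},
\end{equation*}
which I would verify by expanding $D_1D_2-D_2D_1$ and using bilinearity of the skew pairing together with $\langle\partial[h_i],\partial[h_i]\rangle=0$. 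This bracket is itself the derivation $D_{\mathfrak{d}}$ attached to a \emph{single} ray in the balanced direction $\partial[h_1]+\partial[h_2]$, which is exactly the direction dictated by the balancing condition at the trivalent vertex obtained by gluing $h_1$ and $h_2$ at $p$, with Mikhalkin weight $\mathrm{Mult}(h_1)\mathrm{Mult}(h_2)\,|\langle\partial[h_1],\partial[h_2]\rangle|$.

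From here the equivalence reads off. Inserting one outgoing wall $\mathfrak{d}$ whose automorphism cancels $\exp(\pm[D_1,D_2])$ yields $F_\phi=\mathrm{id}$; conversely, if $F_\phi=\mathrm{id}$, then in the nilpotent quotient the commutator must be cancelled, and since $[D_1,D_2]$ is supported on the single monomial $z^{\partial[h_1]+\partial[h_2]}$ at order $t_{h_1}t_{h_2}$ while $t_i^2=0$ suppresses every higher scattering term, the only possible cancelling contribution is the automorphism of a ray in direction $\partial[h_1]+\partial[h_2]$ carrying precisely that function. This forces existence and uniqueness of $\mathfrak{d}$, and identifies its slab function as the one produced by the scattering of $f_{\mathfrak{d}_1}$ and $f_{\mathfrak{d}_2}$, i.e. $f_{\mathfrak{d}}=f_{\mathfrak{d}_1}f_{\mathfrak{d}_2}$ in the truncated sense of $R_k[z_1^\pm,z_2^\pm]$. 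I expect the main obstacle to be the careful bookkeeping of the crossing orientations $\epsilon_{\mathfrak{d}}$ around $\phi$ and the precise normalization matching the skew-pairing coefficient $\langle\partial[h_2],\partial[h_1]\rangle$ with the Mikhalkin multiplicity $\mathrm{Mult}_p$ of the new vertex; this is exactly the content of the base case of the Gross--Pandharipande--Siebert scattering calculus, which I would invoke from \cite{GPS} to pin down the signs rather than recompute them from first principles.
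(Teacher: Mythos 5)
The paper offers no proof of this lemma at all --- it is imported wholesale from \cite{GPS} --- so your proposal is not competing with an internal argument but supplying the one the citation outsources, namely the first-order case of the tropical vertex computation. Its core is correct: $a_i^2=0$ forces $\mathcal{K}_{\mathfrak{d}_i}=\exp(D_i)$ with $D_i$ a derivation satisfying $D_i^2=0$; every iterated bracket repeating an index dies on $t_{h_i}^2=0$, so the Lie algebra generated by $D_1,D_2$ is two-step nilpotent, Baker--Campbell--Hausdorff collapses, and the monodromy without a new ray is $\mathcal{K}_{\mathfrak{d}_1}\mathcal{K}_{\mathfrak{d}_2}\mathcal{K}_{\mathfrak{d}_1}^{-1}\mathcal{K}_{\mathfrak{d}_2}^{-1}=\exp([D_1,D_2])$; your displayed bracket formula is right; and centrality of $[D_1,D_2]$ in this nilpotent algebra is what makes the position of the new ray in the path-ordered product irrelevant, which is exactly what yields both existence and uniqueness of the cancelling ray. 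This is the same argument as in the cited source, so in substance you have proved what the paper only quotes.

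The one step that does not hold as written is the final identification. Your own computation produces the wall function $f_{\mathfrak{d}}=1+\left|\partial[h_1]\wedge\partial[h_2]\right|\,\mathrm{Mult}(h_1)\,\mathrm{Mult}(h_2)\,t_{h_1}t_{h_2}\,z^{\partial[h_1]+\partial[h_2]}$, and this is \emph{not} equal to $f_{\mathfrak{d}_1}f_{\mathfrak{d}_2}=1+a_1+a_2+a_1a_2$ in any truncation of $R_k[z_1^{\pm},z_2^{\pm}]$: the linear terms $a_1+a_2$ do not vanish over $R_k$, and the literal product carries no factor $\left|\partial[h_1]\wedge\partial[h_2]\right|$. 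So the phrase ``in the truncated sense'' is a fudge rather than an argument. The defect, however, lies in the lemma's wording and not in your calculation: the function you obtained is precisely the slab function that Definition \ref{1033} attaches to the tropical disc obtained by gluing $h_1$ and $h_2$ at a new trivalent vertex at $p$, whose Mikhalkin multiplicity $\mathrm{Mult}_p=\left|\partial[h_1]\wedge\partial[h_2]\right|$ supplies exactly the wedge factor, and it agrees with the formula in \cite{GPS}. The clean fix is to state this identification explicitly instead of forcing agreement with the displayed product. One further sentence is needed for completeness: if $t_{h_1}$ and $t_{h_2}$ share a variable, then $t_{h_1}t_{h_2}=0$, the commutator vanishes, and $F_{\phi}=\mathrm{id}$ with no new wall at all, so the ``only if'' direction must implicitly exclude this degenerate configuration (as the paper's genericity assumption is presumably meant to do).
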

%
Since $t_i^2=0$, there are only finitely many tropical walls to be added. 
Therefore, one has the following theorem, which is the tropical analogue of the Fukaya's trick that we proved in Proposition \ref{1086}.
      \begin{theorem}[\cite{G7} Proposition 4.7, Theorem 4.15]
      \label{1002}
      $F_{\phi}=\mbox{id}$ for any generic loop $\phi$ in $\mathbb{R}^2$. Moreover, given $u,u'\in \mathbb{R}^2$ in adjacent chambers bounded by a tropical wall $\mathfrak{d}$, we have 
         \begin{align*}
            W_k^{trop}(u)=\mathcal{K}_{\mathfrak{d}}W_k^{trop}(u').    
         \end{align*}
         \end{theorem}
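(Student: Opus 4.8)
The plan is to prove the two assertions in turn, deducing the wall-crossing formula for $W_k^{trop}$ from the consistency statement $F_\phi=\mathrm{id}$. The starting observation is finiteness: since $t_i^2=0$ in $R_k$, every monomial $t_h$ is square-free, so by Proposition \ref{1035} there are only finitely many generalized Maslov index $2$ tropical discs with a given generic end, and hence the wall structure $\mathcal{W}^{trop}$ is a finite union of rays meeting at finitely many singular points. Because $\mathbb{R}^2$ is simply connected, any generic loop $\phi$ is null-homotopic, and I would contract it through generic loops. Away from singular points the only changes to the ordered product defining $F_\phi$ come from back-and-forth crossings of a single wall, which cancel, and from reorderings of crossings of parallel walls, which commute by the remark following Definition \ref{1033}; sweeping $\phi$ across a singular point $p$ changes $F_\phi$ by (a conjugate of) the local contribution $F_{\phi_p}$. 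Consequently it suffices to show $F_{\phi_p}=\mathrm{id}$ for a small loop around each singular point $p$.

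I would then establish this local consistency. By genericity of $p_1,\dots,p_k$ every singular point is a transverse crossing of exactly two non-parallel walls $\mathfrak{d}_1,\mathfrak{d}_2$, arising from discs $h_1,h_2$ (triple points and tangencies being avoided). The key geometric input is that at such a crossing the two discs can be glued: attaching the root edges of $h_1$ and $h_2$ to a new trivalent vertex at $p$ and imposing the balancing condition produces a new tropical disc $h$ whose root edge emanates from $p$, with $\partial[h]=\partial[h_1]+\partial[h_2]$ (transported to $L_p$) and $t_h=t_{h_1}t_{h_2}$. The Mikhalkin multiplicity of the new vertex is $|\partial[h_1]\wedge\partial[h_2]|$, so that $\mbox{Mult}(h)=\mbox{Mult}(h_1)\mbox{Mult}(h_2)\,|\partial[h_1]\wedge\partial[h_2]|$. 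A direct expansion then shows that the slab function $f_{\mathfrak{d}}=1+\mbox{Mult}(h)z^{\partial[h]}t_h$ of the outgoing wall agrees with the product $f_{\mathfrak{d}_1}f_{\mathfrak{d}_2}$ up to higher-order terms killed by $t_i^2=0$, so Lemma \ref{1078} yields $F_{\phi_p}=\mathrm{id}$. Running this over the finitely many crossings—equivalently, inducting on the order in the $t_i$—builds the full consistent structure and gives $F_\phi=\mathrm{id}$.

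Finally I would deduce the wall-crossing formula for the potential. Fix adjacent chambers separated by the wall $\mathfrak{d}$ coming from a disc $h_0$, so $f_{\mathfrak{d}}=1+\mbox{Mult}(h_0)z^{\partial[h_0]}t_{h_0}$, and compare the discs ending at $u$ with those ending at $u'$. For a disc $h$ contributing to $W_k^{trop}(u')$ whose root edge meets $\mathfrak{d}$, forming a new trivalent vertex at the intersection glues in $h_0$; by the same balancing and multiplicity computation this replaces the monomial $z^{\partial[h]}$ by $z^{\partial[h]}$ times $f_{\mathfrak{d}}$ raised to the exponent $\sum_i\langle\partial[h],e_i\rangle\,\langle e_i,\partial[h_0]\rangle$ prescribed in Definition \ref{1033}, which is precisely the effect of $\mathcal{K}_{\mathfrak{d}}$ on $z^{\partial[h]}$. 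Since $t_{h_0}^2=0$, only the first-order term of that power of $f_{\mathfrak{d}}$ survives, so each monomial acquires at most one correction and the bookkeeping of discs that appear or disappear as the end crosses $\mathfrak{d}$ matches term by term, giving $W_k^{trop}(u)=\mathcal{K}_{\mathfrak{d}}W_k^{trop}(u')$.

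The main obstacle throughout is the multiplicity matching: verifying that the Mikhalkin weight of the glued disc reproduces exactly the coefficient generated by the product $f_{\mathfrak{d}_1}f_{\mathfrak{d}_2}$, and that the wedge-product weight at the new vertex is compatible with the pairing exponent built into $\mathcal{K}_{\mathfrak{d}}$. This is the tropical vertex computation of \cite{GPS}, and it is precisely the condition $t_i^2=0$ that truncates the scattering to first order in each $t_i$, turning what would otherwise be an infinite scattering recursion into a finite, elementary check.
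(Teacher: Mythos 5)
The first thing to observe is that the paper does not actually prove Theorem \ref{1002}: it is imported from Gross \cite{G7} (Proposition 4.7 and Theorem 4.15), with Lemma \ref{1078} (quoted from \cite{GPS}) and the finiteness forced by $t_i^2=0$ as the surrounding justification, so your attempt must be measured against that cited argument, whose skeleton (contract a generic loop, check consistency locally at singular points, realize the new wall at a crossing as a glued disc, then propagate $W_k^{trop}$ by wall-crossing) you do reproduce. The first genuine gap is in your reduction step: you assert that by genericity every singular point of $\mathcal{W}^{trop}$ is a transverse crossing of exactly two walls. This is false, because each marked point $p_i$ is itself a singular point from which one wall emanates for every monomial of the lower-order potential at $p_i$ (three walls already at first order, more in general by Proposition \ref{1301}), and no genericity of the configuration removes them. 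Worse, local consistency genuinely fails there: the walls at $p_i$ are rays, not lines, each carrying a function $1+c_j z^{m_j} t_i$ with the directions $m_j$ summing to zero; writing each $\mathcal{K}$ as $\mathrm{id}+t_i X_j$ modulo $t_i^2$, the monodromy is $\mathrm{id}+t_i\sum_j \pm X_j$, and since $X_j(z^{m'})$ is proportional to the monomial $z^{m_j+m'}$ and these are pairwise distinct, the sum cannot vanish. This is precisely the paper's own remark after Proposition \ref{prop:wcmap1p2} that the three wall-crossing maps around $p_1$ compose to a nontrivial monodromy, and it is why Fukaya's trick (Proposition \ref{1086}) is stated only for loops contractible in the complement of $\{p_1,\cdots,p_k\}$. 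Your contraction argument sweeps loops across the $p_i$ indiscriminately and would therefore ``prove'' that this monodromy is trivial; the assertion $F_\phi=\mathrm{id}$ is only correct for generic loops contractible in $\mathbb{R}^2\setminus\{p_1,\cdots,p_k\}$, and your proof neither makes nor uses that restriction.

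The second gap is an internal inconsistency in your local check at a two-wall crossing. You correctly compute the glued multiplicity $\mbox{Mult}(h)=\mbox{Mult}(h_1)\mbox{Mult}(h_2)\,|\partial[h_1]\wedge\partial[h_2]|$, but then claim that $f_{\mathfrak{d}}=1+\mbox{Mult}(h)z^{\partial[h]}t_h$ agrees with $f_{\mathfrak{d}_1}f_{\mathfrak{d}_2}$ up to terms killed by $t_i^2=0$. It does not: the product $f_{\mathfrak{d}_1}f_{\mathfrak{d}_2}$ contains the first-order terms $\mbox{Mult}(h_i)z^{\partial[h_i]}t_{h_i}$, which survive in $R_k$, and its $t_{h_1}t_{h_2}$-coefficient is $\mbox{Mult}(h_1)\mbox{Mult}(h_2)$ with no wedge factor, so your two statements contradict each other whenever $|\partial[h_1]\wedge\partial[h_2]|\neq 1$. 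The correct local criterion (the tropical vertex computation of \cite{GPS} that Lemma \ref{1078} is meant to encode) is that the commutator $\mathcal{K}_{\mathfrak{d}_1}\mathcal{K}_{\mathfrak{d}_2}\mathcal{K}_{\mathfrak{d}_1}^{-1}\mathcal{K}_{\mathfrak{d}_2}^{-1}$ be the wall-crossing map of a single new ray, and this forces that ray's function to be
\begin{equation*}
1+|\partial[h_1]\wedge\partial[h_2]|\,\mbox{Mult}(h_1)\mbox{Mult}(h_2)\,z^{\partial[h_1]+\partial[h_2]}t_{h_1}t_{h_2},
\end{equation*}
so the wedge factor is exactly what consistency demands rather than a discrepancy to be absorbed. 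Since you yourself flag this multiplicity matching as the main obstacle, the verification must be carried out in this commutator form for the local step, and hence for both halves of the theorem, to stand.
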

With help of the above theorem, we can inductively compute the tropical bulk-deformed potential $W_k^{trop}(u)$ for any given configuration of $k$ generic points and Lagrangian torus fiber $L_{u}$ in the next subsection. 

%
%

   \subsection{Algorithmic computations of $\mathcal{W}^{trop}$ and $W_k^{trop}(u)$} \label{1084} 
We provide the algorithm of computing the tropical walls $\mathcal{W}^{trop}$ as well as the $k$th-ordered bulk-deformed potential $W_k^{trop}$ tropically. This inductive algorithm should be well-known to experts, but we include it here for completeness. 

Let $p_1, \cdots ,p_k$ be $k$ generic points in $(\C^*)^2.$ By induction, suppose that for any arbitrary generic $k$ points configuration, we are able to locate the corresponding walls, and to compute the tropical bulk-deformed potential $W_k^{trop}(u)$ in every chambers. The strategy to construct the tropical wall structure for $p_1,\cdots, p_{k+1}$ is to use tropical wall structure associated to the point constraints $$p_1,\cdots,\hat{p}_i,\cdots, p_{k+1}$$
and the tropical bulk-deformed potential in each of the chambers, which are known to us by the induction hypothesis. Here, the notation $\hat{p}_i$ indicates that we omit the $i$th point constraint.

In particular, the expression of $W_k^{trop}(u)$ at the point $u=p_i$ can be computed. Every term $n_{\beta}z^{\partial \beta}$ in $W_k^{trop}(p_i)$ implies that there are $n_{\beta}$ tropical discs counted with weights which end on $p_i$. Therefore, extending the edges of those tropical discs with end on $p_i$ gives the corresponding family of tropical discs of generalized Maslov index zero.  Conversely, any tropical discs of generalized Maslov index zero with $p_i$ on the edge with the end arise in this way. 

As a result, we have shown that the tropical walls emanating from $p_i$ are in one-to-one correspondence with the monomials in $W_k^{trop}(p_k)$.
We apply the procedure to each $p_i$, and in addition, we add one more wall for each intersection of the existing walls by Lemma \ref{1078}. There can only be finitely many walls added in this step, thanks to the relations $t_i^2=0$ in the coefficient ring $R_k$. 

Now given a tropical disc $(h,T,w)$ of generalized Maslov index zero. Let $v$ is the other vertex of the edge $e$ adjacent to the end. Deleting the edge $e$ of $T$ gives rise to two tropical discs $(h_1,T_1,w_1)$ and $(h_2,T_2,w_2)$ with ends on $h(v)$ (since $v$ is trivalent), where $T_1 \cup T_2=T\backslash e$ and $h_i=h|_{T_i}$, $w_i=w|_{T_i}$. 
Since generalized Maslov index tropical discs are rigid up to elongation of the ends,  generalized Maslov indices satisfy $\mbox{MI}'(h_1)+\mbox{MI}'(h_2)=\mbox{MI}'(h)=0$. We see that both $h_1$ and $h_2$ are of generalized Maslov index zero. Then by induction on the number of edges, the above algorithm produces all possible tropical discs of generalized Maslov index zero.

\subsection{Tropical descendant invariants} \label{sec:descendant_trop}
For later use in Section \ref{sec:period_thm}, we recall the tropical descendant curve counting. 
It is almost the same as the tropical curve above except that we allow valency to be higher than $3$ subject to a certain balancing condition.This was first studied by Gross \cite{G7} and later by Mandel-Ruddat \cite{MR16}. 
Here, we review the basic definitions of tropical descendant invariants following the exposition in \cite[Section 2]{MR16}.
 
\begin{defn}
  Let $M \cong \Z^2$ be a rank $2$ lattice and let $M_{\R}:=M \otimes_{\Z} \R$. Given generic points $p_1, \cdots, p_k,u\in M_{\mathbb{R}}$, we denote by
\begin{equation}
\mathcal{M}_{\Delta,n}^{trop}(X, p_1, \cdots, p_k, \psi^{\nu}u) \nonumber
\end{equation}  
the moduli space of tropical rational curves of degree $\Delta$ satisfying the following conditions:
\begin{enumerate}
\item[(1)] $v\in T^{[0]}$ can be non-trivalent only if 
\begin{eqnarray}
h(v)\in \{p_1,\cdots, p_k,u\} \text{ and } \left|h^{-1}(\{p_1,\cdots, p_k\})\cap T^{[0]} \right|=n, \nonumber
\end{eqnarray}
\item[(2)] $v\in T^{[0]}$ and $h(v)=p_i$ then $val(v)=2$,
\item[(3)] $x \in T^{[0]}$ is a distinguished vertex with valency $val(x)=\nu+2$ and $h(x)=u$.

\end{enumerate}
\end{defn}

The virtual dimension of the moduli space $\mathcal{M}_{\Delta,n}^{trop}(X, p_1, \cdots, p_k, \psi^{\nu}u)$ is given by
$$|\Delta|-n-\nu-2.$$
To see this, one first observes that the moduli space of trees $T$ (all possible domains of $h$) with $|\Delta|+n+1$ unbounded edges and one vertex $x$ with valency $\nu+2$ is parametrized by $(\R_{\geq 0})^{|\Delta|+n-\nu-2}\otimes M_{\R}.$ There is an evaluation map at the distinguished vertex $x \in T^{[0]}$ given as
$$ev_x \colon \mathcal{M}_{\Delta,n}^{trop}(X, p_1, \cdots, p_k, \psi^{m}u) \rightarrow M_{\R}, \ \ ev_x(h):=h(x).$$
Requiring $h(x)=u$ further reduces the dimension by $2n$, and we have $|\Delta|-n-\nu-2$.

Next we prove that every such tropical descendant curve 
$$h \in \mathcal{M}_{\Delta,n}^{trop}(X, p_1, \cdots, p_k, \psi^{m}u)$$
can be splitted into finitely many generalized Maslov two tropical discs in the complement of $h(T)-\{u\}.$ This is an analogue of the Lemma 3.3 in \cite{G7} which will be one of key ingredients in the proof of Theorem \ref{thm:period_thm} in Section \ref{sec:period_thm}.
\begin{lemma} \label{lem:split_MI2}
Let $\Sigma\subseteq M_{\mathbb{R}}$ be a fan and $p_1, \cdots , p_k, u \in M_{\R}$ be in generic position. Let $T_i$ denote the closures of the connected components of $T-\{x\}$ for $i=1,2 \cdots, \nu+2$. We restrict $h$ to each $T_i$ to obtain $h_i \colon T_i \rightarrow M_{\R}$. Then when $n=|\Delta|-\nu-2,$ one has that $MI'(h_i)=2$ for all $i$. Equivalently, there are exactly $\nu+2$ generalized Maslov two tropical discs in the complement of $h(T)-\{u\}.$
\end{lemma}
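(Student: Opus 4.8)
The key identity to establish is that, under the critical dimension condition $n=|\Delta|-\nu-2$, each restricted piece $h_i$ is a generalized Maslov index two tropical disc. The plan is to run a global dimension count against the local (per-piece) constraints and show that the only way to saturate the global count is for every piece to be individually rigid of generalized Maslov index exactly two. First I would recall from Definition~\ref{def:tropMI} that $\mathrm{MI}'(h_i)=\mathrm{MI}(h_i)-2|p_{h_i}|$, where $\mathrm{MI}(h_i)$ is twice the total weight of the unbounded edges of $T_i$ and $|p_{h_i}|$ counts the point constraints lying on the valency-two vertices of $T_i$. Each $h_i$ is itself a tropical disc with root at $x$ (the edge of $T_i$ adjacent to $x$ becomes the root edge of the disc $h_i$), so the inductive class $[h_i]\in H_2(X,L_u)$ and its Maslov index are well-defined via the construction in Section~\ref{sec:trop_discs}.

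\textbf{Key steps.} The summation relations are the engine of the argument. Since $T=\{x\}\cup\bigsqcup_i T_i$ and the unbounded edges of $T$ are partitioned among the $T_i$, I would write $\sum_{i=1}^{\nu+2}\mathrm{MI}(h_i)=\mathrm{MI}(h)=2|\Delta|$, and since the point constraints are distributed among the pieces, $\sum_{i=1}^{\nu+2}|p_{h_i}|=n$. Adding these, $\sum_i \mathrm{MI}'(h_i)=2|\Delta|-2n=2(\nu+2)$ using $n=|\Delta|-\nu-2$. Thus the \emph{average} generalized Maslov index over the $\nu+2$ pieces is exactly two. The next and crucial step is a one-sided inequality: for generic point configurations each nonempty piece $h_i$ must satisfy $\mathrm{MI}'(h_i)\geq 2$. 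This follows from the virtual-dimension analysis already used in the holomorphic setting (Lemma~\ref{798}) and its tropical shadow in Proposition~\ref{1035}: a rigid tropical disc with end at a prescribed generic point and passing through its allotted point constraints has generalized Maslov index at least two, with equality precisely when it is rigid up to elongation of the root edge. Any piece with $\mathrm{MI}'(h_i)<2$ (i.e. generalized Maslov index zero or negative after accounting for the point constraints on it) would impose, by the dimension count $\dim = \mathrm{MI}'(h_i)-2$, a negative expected dimension and hence be empty for generic $p_1,\dots,p_k$; the genericity is exactly what forces each surviving piece to meet the lower bound. Combining the lower bound $\mathrm{MI}'(h_i)\geq 2$ for all $i$ with the exact average $\sum_i \mathrm{MI}'(h_i)=2(\nu+2)$ forces $\mathrm{MI}'(h_i)=2$ for every $i$, which is the assertion; the equivalent reformulation that there are exactly $\nu+2$ generalized Maslov two tropical discs then follows immediately since $T-\{x\}$ has precisely $\nu+2$ components (one per edge at the valency-$(\nu+2)$ vertex $x$).

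\textbf{Main obstacle.} The step I expect to be delicate is the lower bound $\mathrm{MI}'(h_i)\geq 2$ and the accompanying bookkeeping for how point constraints and the distinguished vertex $x$ are distributed among the pieces. One must verify that the distinguished higher-valency vertex $x$ is not itself a point constraint (it carries the descendant insertion $\psi^\nu u$ and sits at the generic end $u$, not at any $p_j$), so that no $|p_{h_i}|$ is overcounted, and that the $n$ constraints are partitioned without remainder across the $\nu+2$ trees—this uses condition (2) in the moduli definition forcing valency-two at each $p_i$ and genericity to rule out a constraint sitting at $x$. The subtlety is that a priori a piece could be a multiple cover or carry an unbalanced allocation of constraints; I would handle this by invoking genericity (as in Lemma~\ref{799} and Lemma~\ref{798}) to exclude pieces of negative generalized Maslov index and to guarantee that each surviving piece is trivalent away from $x$ and rigid, so that the dimension formula applies cleanly piece by piece. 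Once the lower bound and the exact sum are in hand, the equality $\mathrm{MI}'(h_i)=2$ is forced with no remaining freedom.
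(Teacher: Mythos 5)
Your proof is correct, and it shares the paper's skeleton: the additivity identity $\sum_{i=1}^{\nu+2}\mathrm{MI}'(h_i)=2(|\Delta|-n)=2(\nu+2)$ followed by a pigeonhole. Where you differ is in the one-sided bound that closes the pigeonhole, and you prove it in the opposite direction from the paper. The paper establishes the \emph{upper} bound $\mathrm{MI}'(h_i)\leq 2$: if some piece had $\mathrm{MI}'(h_i)>2$, it would admit a deformation fixing its end at $u$, hence the whole descendant curve $h$ would deform nontrivially, contradicting the rigidity imposed by the virtual dimension zero condition $n=|\Delta|-\nu-2$. You instead establish the \emph{lower} bound $\mathrm{MI}'(h_i)\geq 2$: a piece with $\mathrm{MI}'(h_i)\leq 0$ ending at $u$ cannot exist, because the ends of such discs sweep out a locus of dimension at most one (the tropical wall structure of Definition \ref{1033}, finite by the argument of Proposition \ref{1035}), which the generic point $u$ avoids. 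Both mechanisms are legitimate and both invoke genericity, but in different places: the paper leans on rigidity of the global curve $h$ (that the vdim-zero moduli really is zero-dimensional for generic constraints), while your route only needs $u$ to lie off the walls generated by the $p_j$'s — a condition already required for $W_k^{trop}(u)$ to be well defined — so your inputs are arguably more economical, whereas the paper's argument stays entirely inside the defining property of the descendant moduli space. Two small inaccuracies you should fix: the expected dimension of discs of a fixed type with end pinned at $u$ is $\mathrm{MI}'(h_i)/2-1$, not $\mathrm{MI}'(h_i)-2$ (harmless, since both are negative exactly when $\mathrm{MI}'<2$); and ``rigid up to elongation of the end'' describes $\mathrm{MI}'=0$ discs sweeping a wall, whereas an $\mathrm{MI}'=2$ disc with end pinned at a generic $u$ is honestly rigid.
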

\begin{proof}
The proof is an index calculation. Since the valency of non-contracted edges at $x$ is $\nu+2$ for curves $h \in \mathcal{M}_{\Delta,n}^{trop}(X, p_1, \cdots, p_k, \psi^{\nu}S).$ For generic point constraints $p_1, \cdots ,p_k$ and $S=\{u\}$, suppose by contradiction that the generalized Maslov index of $h_i$ satisfies $MI'(h_i) > 2$ for some $i,$ then as a tropical disc with boundary on $L_u:=\pi^{-1}(u)$ the map $h_i$ is not rigid. This implies that $h_i$ can be deformed and keeping its boundary on $L_u,$ which gives rise to a non-trivial deformation of the closed tropical descendant invariants $h \colon \Gamma \rightarrow \R^2$ as well. This contradicts the virtual dimension zero assumption and hence $MI'(h_i) \leq 2$ for all $i.$ 

Since $|\Delta|-n =\nu+2$, one has from Definition \ref{def:tropMI},
\begin{equation}
 \sum_{i=1}^{\nu+2} MI'(h_i)= \sum_{i=1}^{\nu+2}2(|\Delta_i(h_i)|-k_i)=2(\nu+2),
 \nonumber
\end{equation}
where $k_i$ is the number of domain marked points for the tropical disc $h_i.$ This implies $MI'(h_i)=2$ for all $i$.
\end{proof}

We end this subsection by introducing the tropical descendants following \cite{G7} and \cite{MR16}. For this, we first need to define relevant weights for the tropical counting.

\begin{defn} \label{defn:tropical_descedant}
   Let $(h,T,w)\in \mathcal{M}^{trop}_{\Delta,n}(X,p_1,\cdots,p_k,\psi^{\nu} u)$. Let $x\in T^{[0]}$ be the distinguished vertex with valency $\nu+2$. Let $n_{\rho}$ be the number of unbounded edges adjacent to $x$ and parallel to $\rho\in \Sigma(1)$. Then we define 
      \begin{align*}
         \mbox{Mult}'(h):=\prod_{\rho\in \Sigma(1)}\frac{1}{n_{\rho}!}\prod_{v\in T^{[0]},val(v)=3}\mbox{Mult}_v.
      \end{align*} Then the tropical descendant is defined to be 
      \begin{align*}
        \langle p_1,\cdots, p_{|\Delta|-\nu-2},\psi^{\nu}u\rangle^{trop}_{\Delta,n}=\sum_{h}\mbox{Mult}'(h),
      \end{align*} where $h$ sum over all tropical curves in $\mathcal{M}^{trop}_{\Delta,n}(X,p_1,\cdots,p_{|\Delta|-\nu-2},\psi^{\nu} u)$.
\end{defn} 
 The tropical descendant Gromov-Witten invariants are defined in the way to match the enumerative geometry. 
  \begin{theorem} \cite{G7}
     For $X=\mathbb{P}^2$, the tropical descendant invariants coincide with the ordinary descendant Gromov-Witten invariants 
       \begin{align*}
          \sum_{\Delta:|\Delta|=dH}\langle p_1,\cdots, p_{3d-\nu-2},\psi^{\nu}u\rangle^{trop}_{\Delta,n}=\langle p_1,\cdots, p_{3d-\nu-2},\psi^{\nu}u\rangle_{dH},
       \end{align*} where $H$ is hyperplane class and $\langle p_1,\cdots, p_{3d-\nu-2},\psi^{\nu}u\rangle_{dH}$ denotes the ordinary descendant Gromov-Witten invariants in class $dH$. 
  \end{theorem}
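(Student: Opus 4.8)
The plan is to bootstrap from the primary (non-descendant) tropical correspondence theorem and then to handle the $\psi$-class insertion by a degeneration argument organized around the disc decomposition of Lemma~\ref{lem:split_MI2}.

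First I would settle the base case $\nu = 0$. Here the distinguished vertex $x$ has valency $\nu + 2 = 2$, so no unbounded edge is adjacent to $x$; consequently every factor $1/n_\rho!$ equals $1$ and $\mbox{Mult}'(h) = \prod_{v} \mbox{Mult}_v$ reduces to the ordinary Mikhalkin multiplicity, while the condition $h(x) = u$ becomes an ordinary point incidence. Thus $\sum_{\Delta} \langle p_1, \dots, p_{3d-2}, u \rangle^{trop}_{\Delta, n}$ is precisely the weighted count of trivalent degree-$d$ plane tropical curves through the $3d - 1$ generic points $p_1, \dots, p_{3d-2}, u$, which equals $N_d = \langle p_1, \dots, p_{3d-1}\rangle_{dH}$ by Mikhalkin's correspondence theorem \cite{M2}.

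For $\nu \geq 1$ I would compute the tropical side using Lemma~\ref{lem:split_MI2}: deleting the valency-$(\nu+2)$ vertex $x$ breaks $h$ into exactly $\nu + 2$ generalized-Maslov-two tropical discs with ends on $L_u$, so the tropical descendant is a symmetrized count of unordered $(\nu+2)$-tuples of Maslov-two discs meeting at $u$, weighted by $\prod_\rho \frac{1}{n_\rho!}\prod_v \mbox{Mult}_v$. On the Gromov-Witten side I would lift the tropical configuration to a one-parameter toric degeneration of $\mathbb{P}^2$ adapted to a polyhedral decomposition containing $p_1, \dots, p_n$ and $u$ as in Nishinou \cite{N2}, and apply the degeneration formula for descendant invariants. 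The $\psi^\nu$-class localizes on the component of the central fiber dual to $x$, while the point constraints distribute among the remaining components exactly as dictated by the Maslov-two disc pieces; matching the trivalent-vertex weights $\mbox{Mult}_v$ with the local rational-curve counts then proceeds as in the primary correspondence.

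The main obstacle is the local descendant computation at the high-valency vertex: one must show that the genus-$0$ descendant invariant of the toric component dual to $x$, carrying one $\psi^\nu$ insertion and meeting its toric boundary with the multiplicities read off from the $\nu + 2$ incoming directions, contributes exactly the symmetrization factor $\prod_\rho \frac{1}{n_\rho!}$. I would control this step in one of two ways: either by the genus-$0$ topological recursion relation on $\mathbb{P}^2$, which trades $\psi^\nu$ for products of lower descendants obtained by node splitting and thereby feeds the induction started in the base case, with the tropical analogue being the resolution of $x$ into a valency-$(\nu+1)$ vertex plus a new trivalent vertex; or, more directly, by invoking the identification of tropical descendants with log Gromov-Witten descendants of Mandel-Ruddat \cite{MR16}, under which the local multiplicities agree by construction. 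Verifying that the tropical resolution recursion carries precisely the coefficients of the geometric recursion, including the correct automorphism factors at $x$, is the step demanding the most care.
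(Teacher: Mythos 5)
A preliminary remark on the comparison itself: the paper contains no proof of this statement. It is quoted from Gross \cite{G7}, and the argument there --- as the present paper describes in its introduction and in Remark \ref{rmk:Gross} --- is indirect: Gross matches the tropical descendant invariants with coefficients of oscillatory integrals of the tropical bulk-deformed potential, and then invokes Barannikov's closed-string mirror symmetry for $\mathbb{P}^2$ \cite{Barannikov2}, which identifies those integrals with ordinary descendant Gromov--Witten invariants. That detour through mirror symmetry is exactly why the theorem is asserted only for $\mathbb{P}^2$. Your degeneration-style correspondence argument is therefore a genuinely different route, and it must be judged on its own merits; as written, it has a gap that Gross's route was designed to circumvent.

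The gap is the passage from log (relative) to absolute invariants. Both of your mechanisms for the key step output invariants relative to the toric boundary: a toric degeneration in the style of \cite{N2, NS} counts curves with prescribed intersection behavior along $D$, and your ``more direct'' fallback, Theorem \ref{thm:MR_main} of Mandel--Ruddat \cite{MR16}, by its very statement identifies tropical descendants with \emph{log} Gromov--Witten descendants $\langle\cdots\rangle^{X(\log D)}_{\Delta,n}$. The theorem you must prove asserts equality with the \emph{ordinary} descendant invariant $\langle\cdots\rangle_{dH}$, so after either mechanism you still owe the identity $\sum_{\Delta}\frac{1}{|\mathrm{Aut}(\Delta)|}\langle\cdots\rangle^{X(\log D)}_{\Delta,n}=\langle\cdots\rangle_{dH}$, i.e.\ that summing log descendants over all tangency profiles $\Delta$ of class $dH$ reproduces the absolute descendant invariant. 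With $\psi$-insertions this log/absolute comparison is not automatic and is nowhere addressed in your proposal; note that the present paper is careful to phrase its own period theorem (Theorem \ref{thm:period_thm}) entirely in log terms precisely to avoid this issue. Your first alternative --- classical genus-zero TRR matched against a tropical vertex-resolution recursion, with Mikhalkin's theorem \cite{M2} as base case --- is the only one of your routes that stays on the absolute side (it is essentially Markwig--Rau's strategy, not Gross's), but it transfers the entire burden onto proving that the tropical counts with the weights $\prod_{\rho}1/n_{\rho}!$ of Definition \ref{defn:tropical_descedant} satisfy the tropical TRR with the correct coefficients; you explicitly defer exactly this step, and the paper's remark immediately following the theorem (``the weights for tropical descendant curves are still mysterious'') flags it as the real difficulty. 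A small further error: in your base case $\nu=0$, a $2$-valent vertex $x$ may well lie on an unbounded edge, so your claim that no unbounded edge is adjacent to $x$ is false; the conclusion $\prod_{\rho}1/n_{\rho}!=1$ survives only because $n_{\rho}\leq 1$ in that situation.
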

  \begin{remark}
     It is proved tropical-holomorphic correspondence of descendant Gromov-Witten invariants with more general $\psi$-class insertions for $\mathbb{P}^2$. However, the weights for tropical descendant curves are still mysterious to the authors. 
  \end{remark}
  Tropical geometry and log geometry are naturally arising together. It is well-known that the tropical rational curves are non-superabundant. As a consequence of the main result of Mandel-Ruddat, for non-superabundant curves one has the following correspondence between tropical descendant invariants and descendant log Gromov-Witten invariants.
  \begin{theorem}\cite{MR16} \label{thm:MR_main}
   Let $\langle q_1,\cdots, q_k,\psi^{\nu}q_u\rangle^{X(log D)}_{\Delta,n}$ be the log Gromov-Witten descendant invariant of degree $\Delta$, then 
    \begin{align*}
    \langle p_1,\cdots, p_k,\psi^{\nu}u\rangle^{trop}_{\Delta,n}=\frac{1}{|\mbox{Aut}(\Delta)|}\langle q_1,\cdots, q_k,\psi^{\nu}q_u\rangle^{X(log D)}_{\Delta,n}
   \end{align*} 
  \end{theorem}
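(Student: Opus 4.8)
The plan is to obtain this equality as a direct consequence of the main correspondence theorem of Mandel-Ruddat \cite{MR16}, once the enumerative data in our situation is matched to theirs and the relevant non-superabundance hypothesis is verified. First I would recall that \cite{MR16} establishes, for non-superabundant rational tropical curves, an equality between a tropical descendant count and the associated descendant log Gromov-Witten invariant of the pair $(X, \log D)$. Since $X$ is a toric Fano surface, $\dim_\C X = 2$ and all curves in $\mathcal{M}^{trop}_{\Delta,n}(X, p_1, \cdots, p_k, \psi^\nu u)$ are rational, the non-superabundance is automatic --- as already remarked above the statement --- so their theorem applies.

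The second step is to set up the dictionary between the two sides. I would identify the tropical degree $\Delta$, which records the weighted primitive directions of the unbounded edges, with the tangency profile along the toric boundary $D$ that indexes the log invariant; match the $n$ bivalent vertices mapping to $p_1, \cdots, p_k$ with the $n$ affine point constraints $q_1, \cdots, q_k$; and match the distinguished vertex $x$ of valency $\nu + 2$ with $h(x) = u$ to the single descendant insertion $\psi^\nu q_u$. The tropical weight $\mathrm{Mult}'(h)$ of Definition \ref{defn:tropical_descedant}, built from the Mikhalkin vertex multiplicities together with the factors $\frac{1}{n_\rho!}$ for unbounded edges parallel to each ray $\rho \in \Sigma(1)$, must then be shown to reproduce the local contribution of each tropical curve in the correspondence of \cite{MR16}.

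The normalizing factor $\frac{1}{|\mathrm{Aut}(\Delta)|}$ arises from comparing the two conventions for symmetry. On the tropical side the marking $\epsilon$ is part of the datum, so one counts marked curves, and the partial symmetrization already built into $\mathrm{Mult}'$ accounts only for the unbounded edges incident to the central vertex. On the log side, by contrast, the Gromov-Witten invariant is defined with an unordered tangency profile and therefore carries the full automorphism group $\mathrm{Aut}(\Delta)$ of the degree. Tracking how the marked tropical count relates to the unmarked log count produces exactly the global factor $\frac{1}{|\mathrm{Aut}(\Delta)|}$.

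The hardest part will be the precise reconciliation of weights, in particular checking that the tropical $\psi$-class contribution at the higher-valency vertex matches the descendant $\psi$-class on the log side under the multiplicity conventions of \cite{MR16}, and that no residual superabundance correction enters for the degrees $\Delta$ with $|\Delta| = m + n$ relevant to our application in Theorem \ref{thm:big_quantum_period}. Since the statement is a specialization of the Mandel-Ruddat correspondence, however, once the dictionary between $\Delta$, the point constraints, and the descendant insertion is fixed, the claimed equality follows from their theorem together with the non-superabundance of planar rational tropical curves.
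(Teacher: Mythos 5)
Your proposal takes essentially the same route as the paper: the paper gives no independent argument for this statement, but simply invokes the main correspondence theorem of Mandel--Ruddat \cite{MR16}, justified by the observation that planar rational tropical curves are non-superabundant, exactly as you do. Your additional bookkeeping --- the dictionary between the tropical degree $\Delta$ and the tangency profile along $D$, and the origin of the $\frac{1}{|\mbox{Aut}(\Delta)|}$ factor in the marked-versus-unmarked conventions --- is a correct unpacking of the cited statement rather than a different approach.
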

Here we omit the definitions of descendant log Gromov-Witten invariants for readability and refer to \cite[Section 3]{MR16} for details.

\section{Lower order holomorphic bulk-deformed potential for $\mathbb{P}^2$}\label{sec:wall_crossing_Wk}
In this section, we will compute $W_k$ of $\mathbb{P}^2$ for $k=1,2$ explicitly via two different methods. One can find the tropical analogue in \cite{G2}. 
We first provide the initial wall structure for the holomorphic bulk-deformed potential $W_k(u)$. We illustrate some of the difficulties lying ahead of determining the holomorphic wall structure on the log base without a tropical-holomorphic correspondence Theorem \ref{thm:correspondence} in the case of $X=\mathbb{P}^2.$
The arguments in this section are also valid for all toric Fano surfaces, but we focus on $\mathbb{P}^2$ in this section to make the exposition more explicit.

For the first-order potential, all the relevant walls are induced by Maslov index 2 discs which are known to be regular by Cho-Oh \cite{CO}, and hence we can illustrate the wall structure more concretely. For the second order, we can still compute the loci of corresponding walls by direct computation, but we will see that they experience some nonlinear behaviors unlike the first order ones, which motivates us to seek for another method for higher orders in the next section. 

Our setting in this section is as follows.
We fix a generic point $q_1$ in $X=\mathbb{P}^2$, and bulk-deform the Lagrangian Floer complex of toric fibers $L$ by $\mathfrak{b} = t_1 q_1$ where the parameter $t_1$ is taken from $R_1=\Lambda[ t_1 ] / t_1^2$.  The condition $t_1^2 =0$ automatically discard the contributions by holomorphic discs with more than one interior markings, and the resulting bulk-deformed Floer theory can be interpreted as the first-order deformation.

\subsection{Wall-crossing for first-order bulk-deformed potentials} 
The first-order bulk-deformed potential $W_1 (u)$ for a torus fiber $L_u$ is given by the coefficient of the unit class appearing in
$$\sum m_{k,\beta}^{\mathfrak{b}} (b, \cdots, b), $$
where $\mathfrak{b}=t_1 q_1$ as above, and $b= x_1 e_1 + x_2 e_2 \in H^1 (L_u)$ for a toric fiber $L_u$ in $\mathbb{P}^2$. We will write $W_{k,u}$ for $W_k(u)$ in this section.

Due to the relation $t_1^2=0$, the first order bulk-deformed potential $W_{1,u}$ has only two components
$$ W_{1,u} (z_1,z_2) = W_{0,u} (z_1,z_2) + t_1 F_u (z_1,z_2),$$
where $z_i = e^{x_i}$ and $W_{0,u} (z_1,z_2) = T^{u_1} z_1 + T^{u_2} z_2 + \frac{T^{1-u_1-u_2}}{z_1 z_2}$ is the Hori-Vafa potential studied in \cite{CO}, and $F$ counts holomorphic discs with one interior insertion constrained by $q_1$. Let $\beta_0,\beta_1,\beta_2$ be the generators of $\pi_2 (X,L_u)$ represented by the standard Maslov 2 discs that are responsible for the terms $\frac{1}{z_1 z_2}$ and $z_1,z_2$ in $W_{0,u}$. Then $\beta$ can be written as a linear combination of $\beta_i$'s. By degree reason, the term $F_u$ is contributed by Maslov 4 discs which are in the classes $\beta_i + \beta_j$ with $i \neq j$. By Lemma \ref{799}, we do not have contributions from classes of the type $2 \beta_i$ by generic choice of the point constraint $q_1$.

\begin{figure}[htb!]
    \includegraphics[scale=0.55]{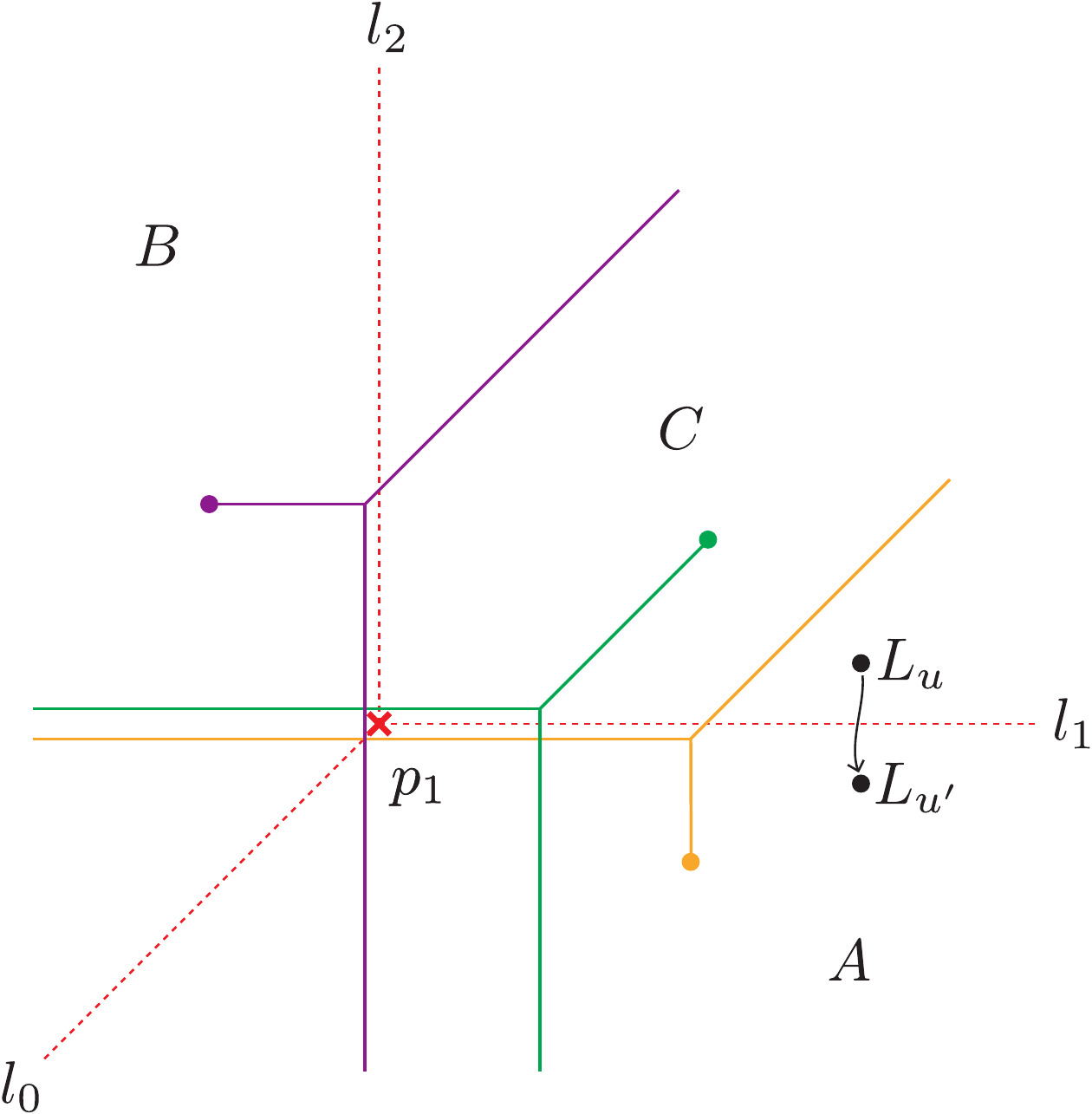}
    \caption{First order walls drawn on the Log base. 
    }
		\label{fig:obulkP2}
\end{figure}


Therefore, 1we are interested in the behavior of the moduli space
$$\mathcal{M}_{k,1} (L_u,\beta;q_1) = \mathcal{M}_{k,1} (L_u,\beta) _{ev^{int}_1} \times \{ q_1\}$$
for $\beta=\beta_i + \beta_j$ when $L_u$ varies among toric fibers. We will show that this moduli space jumps when we cross one of rays depicted in Figure \ref{fig:obulkP2}, or in other words, there is a wall-crossing. These rays are indeed the images of boundaries of Maslov 2 discs that pass through $q_i$ under the fibration map $log$. As drawn in the Figure \ref{fig:obulkP2}, there is an obvious change in countings of \emph{tropical discs} which pass through $p_1 :=\mu (q_1)$ for different chambers $A$, $B$ and $C$. One can consult Section \ref{sec:trop_sec} for details on the analogous tropical disc counting.

In what follows, we shall provide precise Floer theoretical analysis for this wall-crossing phenomenon. The wall-crossing occurs since the $A_{\infty}$ homomorphism defined in \eqref{eqn:induced_on_MC} induces a non-trivial map between the weak Maurer-Cartan spaces associated to a torus fiber in one chamber and that of the another chamber. As explained in Section \ref{subsec:canmcelt}, this map is contributed by the following moduli space of holomorphic discs
\begin{equation}\label{eqn:ftwcmod}
 \mathcal{M}_{k,1 } (L_u,\beta_i, J_t; q_1) := \mathcal{M}_{k,1} (L_u,\beta_i, J_t) _{ev^{int}_1} \times \{ q_1\}
\end{equation}
associated to the $1$-parameter family of almost complex structure $J_t$ considered in \eqref{eqn:J_t}.
It can observed that the virtual dimension of \eqref{eqn:ftwcmod} becomes $-1$  after forgetting the boundary marking. Hence for generic $L_u$, this moduli is empty. 


By the classification of Maslov 2 discs in \cite{CO}, this moduli space becomes nonempty precisely when $L_u$ lies over the three rays $l_0, l_1, l_2$ in Figure \ref{fig:bulkP2} that emanates from $p_1$. Fukaya's trick gives an identity unless two Lagrangians are in different chambers separated by these rays. Thus, one can conclude the following Lemma.

\begin{lemma}
The moduli space $\mathcal{M}_{k,0} (L_u ,\beta_i ; q_1) \neq \phi$ if and only if $L_u$ lies over the rays $l_i$ in the $log$ base.
\end{lemma}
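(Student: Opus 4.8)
The plan is to reduce the statement to Cho--Oh's explicit classification of Maslov index $2$ discs bounding the toric fibers of $\mathbb{P}^2$ and then to read off the wall as the locus of fibers for which the disc family meets $q_1$. First I would recall from \cite{CO} that for each of the three basic classes $\beta_i$ the moduli space $\mathcal{M}_1(L_u,\beta_i,J)$ of Maslov $2$ discs with one boundary marked point is regular and that $ev_0$ is a diffeomorphism onto $L_u\cong T^2$; concretely, in toric coordinates each such disc has a single coordinate varying linearly over $D^2$ while the other is constant of modulus $e^{u_j}$. Since $J_t=(H_t)^\ast J$ and $\mathrm{Log}\circ H_t=\tfrac{1}{\log t}\,\mathrm{Log}$, every $J_t$-holomorphic disc is the $H_t$-image of a $J$-holomorphic one, so it is enough to run the argument for the standard $J$ and transport it by the ray-preserving rescaling $H_t$; this handles the moduli space of \eqref{eqn:ftwcmod} appearing in the lemma.

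The core computation is the image of such a disc under $\pi=\mathrm{Log}$. Writing $v_i:=\partial\beta_i$ for the boundary class, I would show from the explicit formula that $\mathrm{Log}$ collapses the entire disc onto the ray emanating from $u$ in the direction $-v_i$ (pointing toward the divisor $D_i$), and that this ray is the same for every member of the $T^2$-family because the fiberwise torus rotations commute with $\mathrm{Log}$. Hence some disc in the class $\beta_i$ can carry its interior marked point to $q_1$ exactly when $p_1=\mathrm{Log}(q_1)$ lies on this ray, i.e.\ on the side of $u$ toward $D_i$; when it does, the leftover phase data pin down a unique disc. Translating $p_1=u-s\,v_i$ with $s>0$ into a condition on $u$ gives precisely the ray $l_i=\{\,p_1+s\,v_i:s>0\,\}$, and letting $i$ range over $0,1,2$ recovers the three rays of Figure \ref{fig:obulkP2}. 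This yields both directions: if $u\in l_i$ the disc is constructed explicitly, and if $u\notin l_i$ no disc in the class $\beta_i$ meets $q_1$, so the moduli space is empty.

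To make the emptiness off the wall rigorous I would supplement this with the dimension count of Lemma \ref{798}: the interior evaluation map on $\bigcup_u \mathcal{M}_{0,1}(L_u,\beta_i,J)$ has a domain of dimension one less than $X$, so a generic $q_1$ is missed except along the one-dimensional image $l_i$; Proposition \ref{1086} is then consistent, the induced Maurer--Cartan map being the identity away from the walls. The chief difficulty I anticipate is bookkeeping rather than analysis: one must fix once and for all the sign conventions linking the fan generator $v_i$, the orientation of the disc toward $D_i$, and the direction in which $l_i$ leaves $p_1$, and one must invoke the genericity of $q_1$ to discard the degenerate configurations $p_1=u$ and discs tangent to a toric divisor, where the clean ``one disc per point of $l_i$'' description breaks down. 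Completeness of the classification (no other Maslov $2$ classes, and no sphere or disc bubbling in this range) is guaranteed by \cite{CO} together with Lemmas \ref{799} and \ref{lem:no_bubbles}.
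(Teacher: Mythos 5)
Your proof is correct and takes essentially the same approach as the paper, which likewise deduces the lemma from the Cho--Oh classification of Maslov index $2$ discs (the rays $l_i$ being precisely the Log-images of the boundaries of such discs passing through $q_1$) together with the virtual-dimension count for generic emptiness. One contextual slip that does not affect the argument: the family $J_t$ appearing in \eqref{eqn:ftwcmod} is the Fukaya-trick family $(\phi_t^{-1})_* J$ associated to a path of fibers, not Mikhalkin's rescaled structure $(H_t)^* J$ (which only enters in Section \ref{sec:HTcorres}), but your reduction to the standard $J$ goes through verbatim with $\phi_t$ in place of $H_t$.
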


Moreover, the wall-crossing maps are given explicitly as follows. 

\begin{prop}\label{prop:wcmap1p2}
When one travels in clockwise direction around $p$, the wall-crossing maps for three rays $l_0,l_1$ and $l_2$ are given as follows. Namely, along the directions $A \to B$, $C \to A$ and $B \to C$ in Figure \ref{fig:bulkP2}, we have
\begin{equation}\label{eqn:wcformulae1p2}
\begin{array}{lclcl}
(z_1,z_2) &\mapsto&  \left(z_1( 1 + \frac{t_1}{z_1 z_2}  T^{\epsilon_0} ) , z_2 ( 1 + \frac{t_1}{z_1 z_2}  T^{\epsilon_0} )^{-1} \right) &  \mbox{for}& l_0 \\
(z_1,z_2) &\mapsto& \left( z_1, z_2( 1 + t_1 z_1 T^{\epsilon_1} ) \right) & \mbox{for}& l_1 \\
(z_1,z_2) &\mapsto& \left( z_1 ( 1+ t_1 z_2 T^{\epsilon_2})^{-1} ,z_2 \right) & \mbox{for}& l_2
\end{array}
\end{equation}
for some $\epsilon_i>0$.
\end{prop}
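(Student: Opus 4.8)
The plan is to compute the induced map $(F^{\mathfrak{b},can})_*$ on the weak Maurer--Cartan space $H^1(L_u;\Lambda_+)$ across each of the three rays and then translate the result into the coordinates $z_j=e^{x_j}$. First I would exploit the relation $t_1^2=0$: since $\mathfrak{b}=t_1q_1$, the map \eqref{eqn:map_MC} splits as $(F^{\mathfrak{b},can})_*=\mathrm{id}+t_1\,\delta$, where the $t_1^0$-part is the identity because the unbulked Floer theory of a toric fiber of $\mathbb{P}^2$ has no walls (the Hori--Vafa potential $W_{0,u}$ is an honest Laurent polynomial, so there are no generalized Maslov $0$ discs, and Fukaya's trick, Proposition \ref{1086}, forces the identity). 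Thus it suffices to extract the $t_1$-linear correction $\delta$ of $\sum_k f^{\mathfrak{b},can}_k(b^{\otimes k})$.

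Next I would resum the boundary insertions via the divisor axiom \eqref{eqn:divisor2}. Applying it with no $x$-inputs converts $\sum_k f^{\mathfrak{b},can}_k(b^{\otimes k})$ into $\sum_\beta T^{\omega(\beta)}e^{\langle\partial\beta,b\rangle}f^{\mathfrak{b},can}_{0,\beta}(1)=\sum_\beta T^{\omega(\beta)}z^{\partial\beta}\,f^{\mathfrak{b},can}_{0,\beta}(1)$, so that each contributing class $\beta$ produces a monomial $z^{\partial\beta}$ weighted by its disc count $f^{\mathfrak{b},can}_{0,\beta}(1)\in H^1(L_u)$. The $t_1$-linear part selects exactly those $\beta$ carrying a single interior marked point constrained to pass through $q_1$; by the virtual dimension computation below \eqref{eqn:ftwcmod}, such discs are absent for generic $L_u$ and appear only when $L_u$ lies over a ray $l_i$.

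Then I would identify the contributing class across each ray. By the Cho--Oh classification \cite{CO} together with the exclusions of Lemmas \ref{799}, \ref{798} and \ref{lem:no_bubbles} (no multiple covers, no negative generalized Maslov discs, no sphere bubbles for generic $q_1$), across the ray $l_i$ the only contribution is a single regular Maslov $2$ disc in the basic class $\beta_i$ through $q_1$. Hence $\delta=T^{\epsilon_i}z^{\partial\beta_i}v_i$ with $\epsilon_i=\omega(\beta_i)>0$ the symplectic area recorded by the Novikov variable, $z^{\partial\beta_i}\in\{z_1,z_2,(z_1z_2)^{-1}\}$ for $i\in\{1,2,0\}$, and $v_i\in H^1(L_u)$ the output direction of the count. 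To pin down $v_i$ I would use that the wall-crossing map preserves the holomorphic volume form $\frac{dz_1}{z_1}\wedge\frac{dz_2}{z_2}$, as in \eqref{eqn:cluster} and Definition \ref{1033}; writing $v_i=c_1e_1+c_2e_2$ this forces $\langle\partial\beta_i,v_i\rangle=0$, which already yields the three coordinate-wise shapes of \eqref{eqn:wcformulae1p2}. Exponentiating $b\mapsto b+t_1T^{\epsilon_i}z^{\partial\beta_i}v_i$ through $z_j=e^{x_j}$ and using $t_1^2=0$ (so $\log(1+t_1X)=t_1X$) gives the stated formulae.

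The main obstacle is the last refinement: fixing the overall sign of $v_i$ --- equivalently, whether the slab function enters with exponent $+1$ or $-1$ in each coordinate --- for the prescribed clockwise directions $A\to B$, $C\to A$, $B\to C$ around $p_1$. This is governed by the orientation of the one-parameter family moduli space \eqref{eqn:ftwcmod} defining the pseudo-isotopy together with the sign $\mathrm{sgn}\langle\phi'(t_{\mathfrak{d}}),\partial\beta_i\rangle$ of the crossing, and is the one place where the volume-preservation shortcut does not suffice. Verifying that the disc count is exactly $\pm1$ rather than a higher multiplicity likewise rests on the regularity of the Cho--Oh discs and the dimension exclusions above, and is the other ingredient that must be checked with care.
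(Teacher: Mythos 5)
Your proposal follows the same skeleton as the paper's proof: a Fukaya pseudo-isotopy between fibers on the two sides of the ray, the dimension count showing the parametrized moduli space \eqref{eqn:ftwcmod} has a nonempty slice only when the deformed fiber sits exactly over $l_i$, the Cho--Oh classification together with Lemmas \ref{799}, \ref{798} and \ref{lem:no_bubbles} to isolate the single class $\beta_i$, the divisor axiom \eqref{eqn:divisor2} to resum the boundary insertions into $T^{\omega(\beta_i)}e^{\langle\partial\beta_i,b\rangle}f^{\fb,can}_{0,\beta_i}(1)$, and exponentiation using $t_1^2=0$. Up to that point your argument and the paper's coincide.

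The gap is in how you pin down the direction $v_i$ of the correction term $f^{\fb,can}_{0,\beta_i}(1)$. You derive $\langle\partial\beta_i,v_i\rangle=0$ from the claim that the holomorphic wall-crossing map preserves $\frac{dz_1}{z_1}\wedge\frac{dz_2}{z_2}$, citing \eqref{eqn:cluster} and Definition \ref{1033}. Neither reference supplies this fact for the Floer-theoretic map: \eqref{eqn:cluster} is an expository statement in the introduction, and Definition \ref{1033} concerns the \emph{tropical} wall-crossing maps, where volume preservation is built into the definition. For the map induced by the pseudo-isotopy, writing $v_i=c_1e_1+c_2e_2$, a one-line computation with $t_1^2=0$ shows that preservation of the volume form is \emph{equivalent} to $c_1m_1+c_2m_2=0$ where $\partial\beta_i=(m_1,m_2)$; so invoking volume preservation to get the orthogonality is circular --- it is precisely the geometric content one must establish. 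The paper instead reads the direction off from the geometry of the moduli space: $f^{\fb,can}_{0,\beta_i}(1)$ is the pushforward $(ev_0)_!(1)$ over the one-dimensional moduli of $\beta_i$-discs through $q_1$ at the wall-crossing instant, and since the boundary evaluation sweeps the boundary circle of the disc, the output is a multiple of the Poincar\'e dual of $\partial\beta_i$ in $H^1(L_u)$, which pairs to zero with $\partial\beta_i$ automatically; this is what the paper's proof means when it says the divisor axiom implies $f_*(x_1e_1+x_2e_2)=x_1e_1+(x_2+t_1e^{x_1}T^{\epsilon_1})e_2$ for $l_1$. The same identification also settles the two items you defer: the multiplicity is $1$ because the Cho--Oh disc in class $\beta_i$ through $q_1$ bounded by a fiber over the wall is unique and regular, and the sign is fixed by the orientation of the crossing, i.e.\ by $\mathrm{sgn}\langle\phi'(t),\partial\beta_i\rangle$. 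So your proof is completable, but the volume-preservation shortcut should be replaced by the identification $v_i=\mathrm{PD}(\partial\beta_i)$ coming from the boundary evaluation.
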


If we introduce new variable $z_0 = \frac{1}{z_1 z_2}$, then the first map in \eqref{eqn:wcformulae1p2}  also simplifies into a standard form of the cluster transformation.

\begin{proof}
We will prove the formula for $l_1$ as the rest is similar. As in Figure \ref{fig:bulkP2}, one chooses two Lagrangian torus fibers $L_u$ and $L_{u'}$ lying on different sides of $l_1,$ and a smooth isotopy $\phi_{t}$ for $0 \leq t \leq q$ between them along the paths drawn in Figure \ref{fig:obulkP2}. For $J_t=(\phi_t)_*J,$ we recall that the moduli space
\begin{equation}\label{eqn:cupsmc}
 \mathcal{M}_{k,1 } (L_u,\beta_i, \mathcal{J}; q_1) :=\bigcup_{t \in [0,1]} \{t\} \times \mathcal{M}_{k,1} (L_u,\beta_i, J_t) _{ev^{int}_1} \times \{ q_1\} 
\end{equation}
induces a map on the weak Maurer-Cartan spaces of $L_u$ and $L_{u'}$ is given by
$$f_\ast : H_1 (L_u) \to H_1 (L_{u}) \quad x \mapsto f_0 (1) + f_1(b) + f_2(b,b) + \cdots,$$
defined explicitly in equation \eqref{eqn:comptwf}.

One observes that the only nonempty slice of \eqref{eqn:cupsmc} is when $\phi_t (L_u)$ precisely sits over $l_1$. Since the corresponding disc has class $\beta_1$, the divisor axiom proved in Lemma \ref{lem:divisor} implies that
$$ f_\ast (x_1 e_1 + x_2 e_2) =  x_1 e_1+  (x_2 + t_1 e^{x_1} T^{\epsilon_1}) e_2,$$
where $\epsilon_1=\omega(\beta_1)$, which is equivalent to the desired statement in exponential coordinates. One notices that
$$ e^{x_2 + t_1 e^{x_1} T^{\epsilon_1}} = e^{x_2} ( 1+ t_1 e^{x_1} T^{\epsilon_1} ) = z_2 ( 1+ t_1 z_1 T^{\epsilon_1})$$
since $t_1^2=0$. This completes the proof.
\end{proof}

\begin{remark}
Notice that the composition of three wall crossing maps produces a nontrivial monodromy while the assumption in Proposition \ref{1086} fails.
\end{remark}

Once we fix the order $k$, Maslov indices of holomorphic discs contributing to both the wall-crossing maps and the bulk-deformed potentials are bounded from above. Hence there are finitely many terms with respect to exponential variables $z_1$ and $z_2$ due to the Fano condition and divisor axioms \ref{lem:divisor}. In the following discussions, we will set $T=1$ to simplify the expressions of potentials.

\subsection{Computation of $W_1$}\label{subsec:W1comp}
Recall that we have defined $W_{1,u} = W_{0,u} + t_1 F_u,$ where $F_u$ counts Maslov $4$ discs passing through $q_1$ which boundary on $L_u,$ or a generalized Maslov two disc contributing to $W_{1,u}$. Due to wall-crossing, the expressions of $F_u$ differ in each chamber. We choose generic points $u_A,$ $u_B$ and $u_C$ in the chambers $A$,$B$ and $C$ respectively as in Figure \ref{fig:bulkP2}, and we compute $F_{u_A}$, $F_{u_B}$ and $F_{u_C}$ using wall-crossing maps as follows.

\begin{figure}[htb!]
    \includegraphics[scale=0.55]{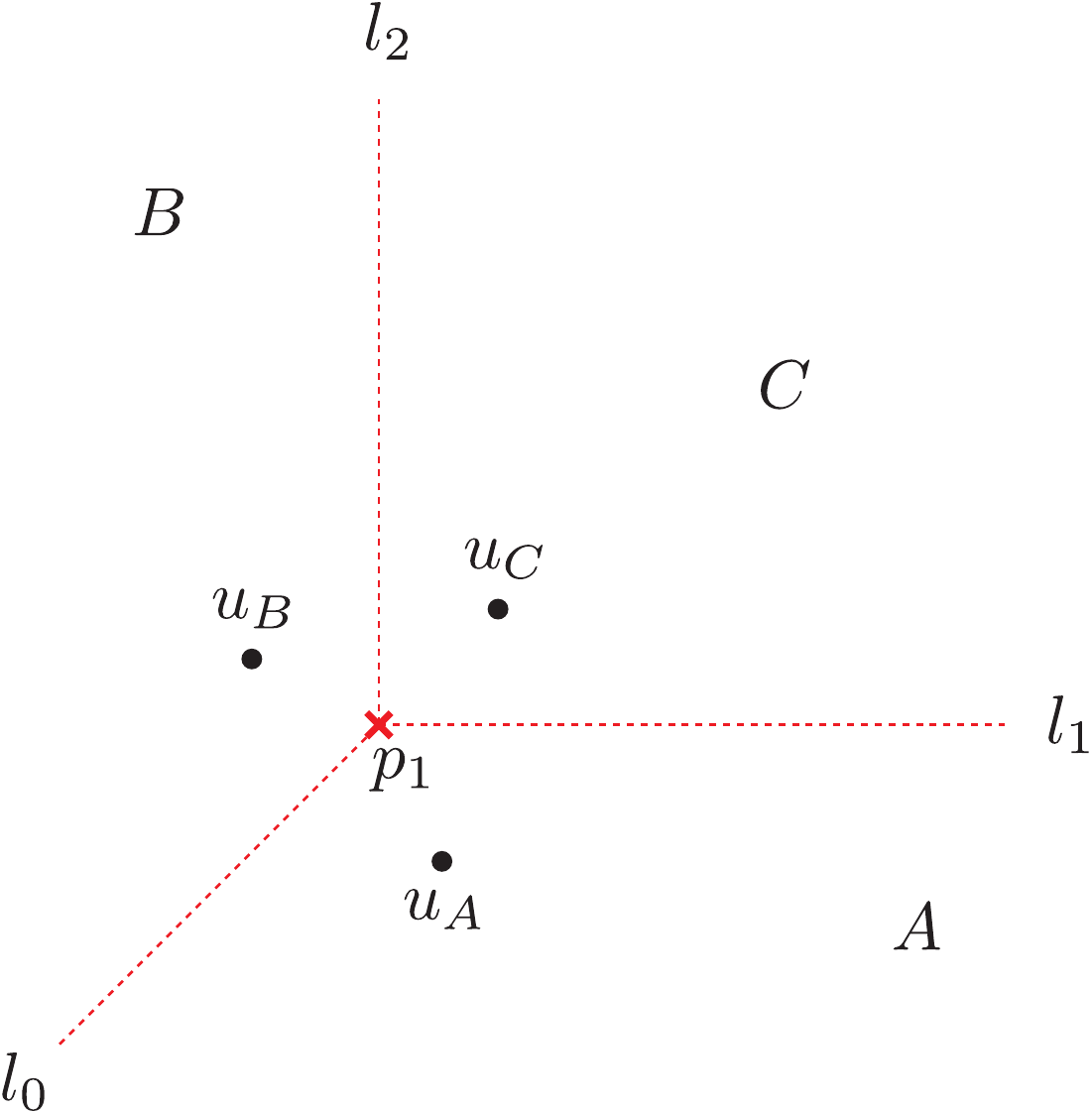}
    \caption{Three chambers induced by the point constraint $q_1$ ($p_1=log (q_1)$).}
		\label{fig:bulkP2}
\end{figure}

Since $W_1$ is compatible with wall-crossing maps,
one observes that $F_{u_A}$ does not depend on torus fibers $L_{u_A}$ as long as $u_A$ stays in $A$ given the condition that $T=1$, and similarly for $F_{u_B}$ and $F_{u_C}$. By the classification theorem \cite{CO} and by Lemma \ref{799}, the only possible contributions to $F$ are from primitive classes $\beta_0 + \beta_1$, $\beta_1 + \beta_2$ and $\beta_2 + \beta_0,$ which gives the terms
$$ \frac{1}{z_2}, \quad z_1 z_2, \quad \frac{1}{z_1}$$
in the bulk-deformed potential, respectively. 

Next let us investigate the term $z_1 z_2$. If there exists a holomorphic disc $w: (D^2, \partial D^2) \to (X,L_u)$ of the class $\beta_1 + \beta_2$ passing through $q_1$, then its entire image should lie in $X \setminus \{ z_0 = 0\}$ by the classification of such discs, as otherwise it would have $\beta_0$ component in its class. Therefore, one can compose $|z_1| : X \setminus \{ z_0 = 0\} \to \R$ with $w$ to get a well-defined function on $D^2$. Since this function achieves the value $|q_1|^2$ in the interior of $D^2$, the $x$-coordinate of $w$ must be bigger than that of $p_1$ by maximum principle.  We can apply the same argument to $|z_2|$, and see that $w$ should lie in the chamber $C$. The same is true for other classes $\beta_0 + \beta_1$ and $\beta_2 + \beta_0$. Therefore we have
$$F_{u_A} (z_1,z_2) =   a \frac{1}{z_2}, \quad F_{u_B}(z_1,z_2) = b \frac{1}{z_1}, \quad F_{u_C}(z_1,z_2) =  c z_1 z_2$$
for some constants $a,b,c$. One can determine $a,b,c$ using the wall-crossing maps \eqref{eqn:wcformulae1p2} as follows.

\begin{prop}\label{prop:firstorderWp2}
The first order bulk-deformed potential on the chambers $A$,$B$ and $C$ are given by
\begin{equation*}
\begin{array}{l}
W_{1,{u_A}} (z_1,z_2)= z_1 +  z_2 + \frac{1}{z_1 z_2} + t_1 a  \frac{1}{z_2} \\
W_{1,{u_B}} (z_1,z_2)=   z_1 +   z_2 + \frac{1}{z_1 z_2} + t_1 b   \frac{1}{z_1} \\
W_{1,{u_C}} (z_1,z_2)=  z_1 +   z_2 + \frac{1}{z_1 z_2} + t_1 c  z_1 z_2
\end{array}
\end{equation*}
with $a=b=c=1$.
\end{prop}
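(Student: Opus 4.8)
The plan is to determine the three unknown constants $a$, $b$, $c$ by exploiting the compatibility of the bulk-deformed potential with the wall-crossing maps, as established in equation \eqref{eqn:comptwf} and Theorem \ref{1002}'s holomorphic analogue. Since $W_1$ is preserved under the cluster transformations \eqref{eqn:wcformulae1p2} when passing between adjacent chambers, applying the appropriate wall-crossing map to the potential on one chamber must reproduce the potential on the adjacent chamber. This gives a system of algebraic relations among $a$, $b$, and $c$ that should pin down their values.

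Concretely, I would start from $W_{1,u_A}(z_1,z_2) = z_1 + z_2 + \frac{1}{z_1 z_2} + t_1 a \frac{1}{z_2}$ and apply the wall-crossing map for $l_1$ along the direction $A \to B$, namely $(z_1, z_2) \mapsto (z_1, z_2(1 + t_1 z_1))$ (setting $T=1$). Since $t_1^2 = 0$, substituting and expanding to first order in $t_1$ is straightforward: the term $z_2$ becomes $z_2 + t_1 z_1 z_2$, the term $\frac{1}{z_1 z_2}$ becomes $\frac{1}{z_1 z_2}(1 - t_1 z_1) = \frac{1}{z_1 z_2} - \frac{t_1}{z_2}$, while $z_1$ and $\frac{1}{z_2}$ contribute their own first-order pieces. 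Matching the result with $W_{1,u_B}(z_1,z_2) = z_1 + z_2 + \frac{1}{z_1 z_2} + t_1 b \frac{1}{z_1}$ forces cancellation of the spurious $z_1 z_2$ and $\frac{1}{z_2}$ monomials and equates the surviving coefficient with $b$. I expect this to yield a relation such as $a = 1$ together with a new $\frac{1}{z_1}$ term whose coefficient identifies $b$. Repeating this for the remaining two walls, $C \to A$ via $l_2$ and $B \to C$ via $l_0$, produces the complementary relations that close the system.

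The key observation making the system solvable is that each wall-crossing map, applied to the three standard Hori-Vafa monomials, generates exactly the new monomial carried by the adjacent chamber's correction term, and these forced matchings propagate consistently around the loop $A \to B \to C \to A$. The consistency of the full monodromy being trivial (as noted in the remark following Proposition \ref{prop:wcmap1p2}, the composition is in fact nontrivial as a cluster monodromy, but the potential itself must return to itself) guarantees that the three relations are compatible and uniquely determine $a = b = c = 1$. I would verify that the loop closes by checking that composing all three substitutions returns $W_{1,u_A}$ to itself modulo $t_1^2$.

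The main obstacle I anticipate is bookkeeping the signs and the direction conventions: the wall-crossing formulae in \eqref{eqn:wcformulae1p2} are stated for specific traversal directions (clockwise around $p_1$), and one must apply the correct map (or its inverse) when moving in the prescribed order $A \to B \to C$. A subtle point is ensuring the exponent $\langle e_i, \partial[h]\rangle$ in the cluster transformation matches the boundary class of the relevant Maslov $2$ disc, so that the induced change of the Hori-Vafa monomials lands precisely on the expected correction monomial rather than introducing extraneous terms. Once the directional conventions are fixed consistently with Figure \ref{fig:bulkP2}, the computation reduces to the elementary first-order expansions above, and the identification $a=b=c=1$ follows from matching coefficients.
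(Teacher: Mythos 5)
Your overall strategy---pinning down $a,b,c$ from the compatibility \eqref{eqn:comptwf} of $W_1$ with the wall-crossing maps of Proposition \ref{prop:wcmap1p2}, expanding to first order in $t_1$, and matching monomials---is exactly the argument the paper uses. However, the concrete step you propose would fail, because you have mispaired the walls with the chamber transitions. In Proposition \ref{prop:wcmap1p2} the map $(z_1,z_2)\mapsto (z_1,z_2(1+t_1z_1))$ is the crossing of $l_1$, which is the wall separating $C$ from $A$ (direction $C\to A$); the wall separating $A$ from $B$ is $l_0$, with map $(z_1,z_2)\mapsto \bigl(z_1(1+\tfrac{t_1}{z_1z_2}),\, z_2(1+\tfrac{t_1}{z_1z_2})^{-1}\bigr)$. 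Your own expansion, carried to the end, gives
\[
W_{1,u_A}\bigl(z_1, z_2(1+t_1z_1)\bigr) \;=\; z_1+z_2+\frac{1}{z_1z_2}\;+\;t_1\,z_1z_2\;+\;(a-1)\,t_1\frac{1}{z_2},
\]
which contains the monomial $t_1z_1z_2$ with coefficient $1$ \emph{independently of $a$} and never produces a $\tfrac{1}{z_1}$ term. Matching this against $W_{1,u_B}=z_1+z_2+\tfrac{1}{z_1z_2}+t_1b\tfrac{1}{z_1}$, as you propose, is therefore impossible (it forces $1=0$); there is no ``cancellation of the spurious $z_1z_2$'' and no coefficient that ``identifies $b$.'' The correct matching is with $W_{1,u_C}$, which yields $a=c=1$---this is precisely the paper's computation.

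This is more than the sign/direction bookkeeping you flag as the anticipated obstacle: it is an incorrect incidence relation between walls and chambers, and as written the system of equations you would derive is inconsistent rather than solvable. The repair is straightforward. Read off from Figure \ref{fig:bulkP2} that $l_0$ separates $A$ and $B$, $l_1$ separates $C$ and $A$, and $l_2$ separates $B$ and $C$, and use the convention implicit in the paper's proof that for a crossing $X\to Y$ with map $F$ one has $W_{1,u_Y}\circ F = W_{1,u_X}$. Then the $l_1$ crossing gives $a=c=1$, and the $l_0$ crossing gives
\[
W_{1,u_B}\Bigl(z_1+\frac{t_1}{z_2},\, z_2-\frac{t_1}{z_1}\Bigr)=z_1+z_2+\frac{1}{z_1z_2}+t_1\frac{1}{z_2}+(b-1)\,t_1\frac{1}{z_1}
\;=\;W_{1,u_A}(z_1,z_2),
\]
forcing $a=1$ and $b=1$. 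With the wall--chamber incidences corrected, your argument coincides with the paper's proof.
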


\begin{proof}
By \eqref{eqn:comptwf}, $W_1$ in different chambers have compatibility with the wall-crossing maps given in Proposition \ref{prop:wcmap1p2}. For instance, the expression
\begin{equation}\label{eqn:WABcoord}
W_{1, u_A} \left( z_1, z_2( 1 + t_1 z_1   ) \right)
\end{equation}
should agree with $W_{1, u_C} (z_1,z_2)$. Then $W_{1, u_A} \left( z_1, z_2( 1 + t_1 z_1   ) \right)$ becomes
\begin{eqnarray*}
 &&  \ \ z_1   +   z_2 \left(  1 + t_1 z_1    \right)  + \frac{1}{z_1 z_2  \left(  1 + t_1 z_1   \right) } + a  \frac{1}{z_2  ( 1 + t_1 z_1  ) } \\
&&= z_1   +    z_2 + t_1  z_1 z_2 + \frac{  (1 + t_1 a  z_1)  }{z_1 z_2  \left(  1 + t_1 z_1    \right) }.
\end{eqnarray*}
%
Using $\frac{1}{1+t_1 z_1} = 1- t_1 z_1$ due to $t_1^2=0$, one obtains that
$$   z_1   +  z_2 + t_1   z_1 z_2 +   \frac{1}{z_1z_2} + (a-1) t_1 \frac{1}{z_2}. $$
Comparing with the formula of $W_{1,u_C}$, we conclude that $a=c=1$. Similarly, one can show that $b=1$.
 \end{proof}


\subsection{The second order bulk-deformed potential} \label{sec:9999}

We next  consider the case of two point constraints $q_1$, $q_2$ in $\mathbb{P}^2$ and study the behavior of the second order potential $W_2$. We will only give a partial computation in this subsection since we will provide a different approach to compute $W_k$ explicitly for higher-order potentials later in Section \ref{sec:HTcorres}.

Similarly as before, we set our bulk parameters to be
$$ t_1 q_1 + t_2 q_2$$
where $t_i$'s are taken from $\Lambda [ t_1, t_2 ] / t_1^2= t_2^2 = 0 $ as before. As functions in $t_1$ and $t_2$, the second-order potential $W_2$ decomposes into
$$ W_2(z_1,z_2) = W_0 (z_1,z_2) + t_1 F_1^1 (z_1,z_2) + t_2 F_1^2  (z_1,z_2)+ t_1 t_2 F_2 (z_1,z_2),$$
where $F_1^1$ and $F_1^2$ are computed in Proposition \ref{prop:firstorderWp2} above. However, their wall-crossing phenomena depend on different chamber structures. We conclude that the wall structure for $W_2$ should at least contain two trivalent graphs with vertices $p_1= \mu (q_1)$ and $p_2  = \mu (q_2)$. Recall that these are the loci of Lagrangian torus fibers bounding generalized Maslov zero discs passing through $q_1$ and $q_2$, respectively. Thus the only missing part is to count discs that pass through both $q_1$ and $q_2$. Such discs should have Maslov index $4$ in order for the corresponding moduli space to be isolated. By the classification, their relative classes should fall into one of primitive classes
$$\beta_0 + \beta_1 , \beta_1 + \beta_2, \beta_2 + \beta_0$$
as we exclude multiple cover contributions by imposing generic point constraints.

Let us first find all possible contributions in the class of $\beta_1 + \beta_2$.
Fix two points 
$$q_1:=(a,b), \ q_2:=(c,d)\in \C^2 = \mathbb{P}^2 \setminus \{z_0 =0\}. $$ For convenience, we assume that $a,b,c,d \in \R$ as an example to demonstrate the non-linearity of the second-order wall. Any holomorphic discs with Maslov index $4$ in class $\beta_1 +\beta_2$ can be written as
$$u(z): \{ |z| \leq 1\} \to X, \qquad z \mapsto  \left(\lambda \dfrac{z-\alpha}{1- \bar{\alpha}z}, \rho \dfrac{z-\beta}{1-\bar{\beta}z} \right).$$
If we choose two interior marked points to be at $0$ and $r \in (0,1)$ inside the unit disc, respectively, then the incident condition with $q_1$ and $q_2$ gives
$$ u(0) = q_1=(a,b) ,\quad u(r) = q_2 = (c,d).$$
Hence,
$$ (-\lambda \alpha, -\beta \rho) = (a,b),\quad \left(\lambda \dfrac{r-\alpha}{1-\bar{\alpha} r},  \rho\dfrac{r-\beta}{1-\bar{\beta} r}\right) = (c,d).$$
From the first equation, one concludes that $\alpha= - a / \lambda$ and $\beta = -b / \rho$. Plugging it to the second equation gives
\begin{equation*}
\begin{array}{l}
\lambda \dfrac{r+ \frac{a}{\lambda}}{1+ r \frac{a}{\bar{\lambda}}} = c \\
\rho \dfrac{r+ \frac{b}{\rho}}{1+ r \frac{b}{\bar{\rho}}} = d
\end{array},
\end{equation*}
or equivalently,
\begin{equation}\label{eqn:quadeq2order}
\begin{array}{l}
|\lambda|^2 r + a \bar{\lambda} = c \bar{\lambda} + r ac \\
| \rho|^2 r + b \bar{\rho} = d \bar{\rho} + bd r 
\end{array}
\end{equation}
which forces $\lambda$ and $\rho$ to be real numbers.

We are interested in the locus that the tours fibers bounding such discs draw in the base of the torus fibration
$$(\log |z_1|, \log |z_2|) : \C^2 = \mathbb{P}^2 \setminus \{z_0 = 0\} \to \R^2$$
on $\C^2 = \mathbb{P}^2 \setminus \{ z_0 = 0\}$. 
Note that the image of the boundary of $u$ under the moment map is precisely $(x,y):=(\log |\lambda|,\log |\rho|)$. 
%
We first parametrize $\lambda$ and $\rho$ in terms of $r$ by solving \eqref{eqn:quadeq2order}.
$$\lambda (r) =  \dfrac{(c-a) + \sqrt{(c-a)^2 + 4r^2ac}}{2r} >0$$
$$\rho (r) = \dfrac{(d-b) + \sqrt{(d-b)^2 + 4r^2bd}}{2r} >0.$$
Since $r \in (0,1),$ we  can set $s =\frac{1}{r}$ for $1 \leq s < \infty$ and obtain the explicit formulae
\begin{equation}\label{eqn:estimatess}
\begin{array}{lcl}
\lambda (s) =  \dfrac{(c-a)s + \sqrt{(c-a)^2 s^2 + 4ac}}{2} &\sim& (c-a)s \quad \mbox{for} \quad s \gg 1,\\
\rho (s) = \dfrac{(d-b)s + \sqrt{(d-b)^2 s^2 + 4bd}}{2} &\sim& (d-b)s \quad \mbox{for} \quad s \gg 1 
\end{array}
\end{equation}
Notice that $(\lambda(1),\rho(1)) = ( \max \{a,c \}, \max \{b,d\})$. Namely, the new wall due to the discs in the class $\beta_1 +\beta_2$ emanates from the intersection point between the horizontal wall from $p_1$ and the vertical wall from $p_2$ (see Figure \ref{fig:2ndordern}), which can be interpreted as a scattering phenomenon.


\begin{figure}[htb!]
    \includegraphics[scale=0.45]{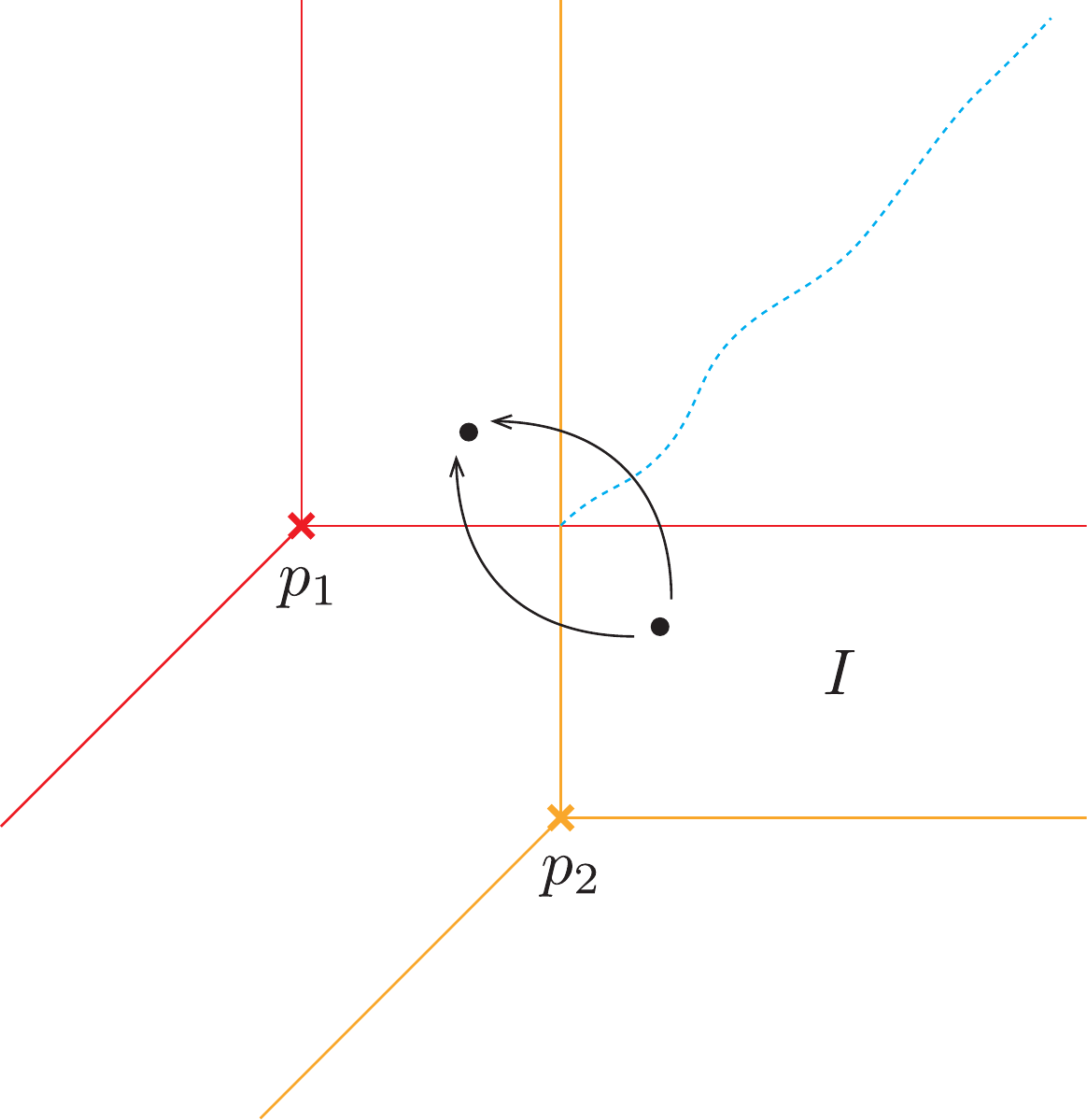}
    \caption{The new wall from scattering}
		\label{fig:2ndordern}
\end{figure}

%
%
%
%

The new wall described above can be also detected by Fukaya's trick \ref{1086}. Consider the two different compositions of the wall-crossing maps from $J$ to $I$ as in Figure \ref{fig:2ndordern}. Since $q_i$ are away from the region enclosed by these two paths, they would produce the same map if there was not an additional wall. However, one can check that these two maps do not agree with each other, which implies that there is an extra contribution to the wall-crossing map. We will use this to compute the precise wall-crossing map for the new wall in the proof of Proposition \ref{prop:wcforw2} below.

One can perform similar computations for other classes of Maslov 4 discs, and the full wall structure is drawn in Figure \ref{fig:2ndorderfull}.  
The additional two walls induced by the classes $\beta_0 + \beta_1$ and $\beta_2 + \beta_0$ can be also understood by the following disc degenerations. From the computation in Section \ref{subsec:W1comp}, we know that there is a Maslov 4 disc in the class $\beta_0 + \beta_1$ which passes through $q_1$ and is bounded by $L_{p_1}$. This holomorphic disc contributes to $W_1$ at $p_2$ by $t_1 \frac{1}{z_2}$. Then it glues with a constant disc at $q_2,$ where $\pi(q_2) = p_2$, and the disc continues downward from $p_2$ to create the vertical wall as in Figure \ref{fig:2ndorderfull}. We will give more detailed analysis on the moduli of such glued discs in Proposition \ref{1301}. This type of walls will be called \emph{extended walls}.

We remark that these new walls together with two trivalent graphs make  Figure \ref{fig:2ndorderfull} into a scattering diagram after straightening the new walls.

\begin{remark}
It is an important observation that the additional walls are not straight lines. In particular, this makes it hard to compute the bulked superpotential by direct tropical counting. 
\end{remark}

\begin{figure}[htb!]
    \includegraphics[scale=0.45]{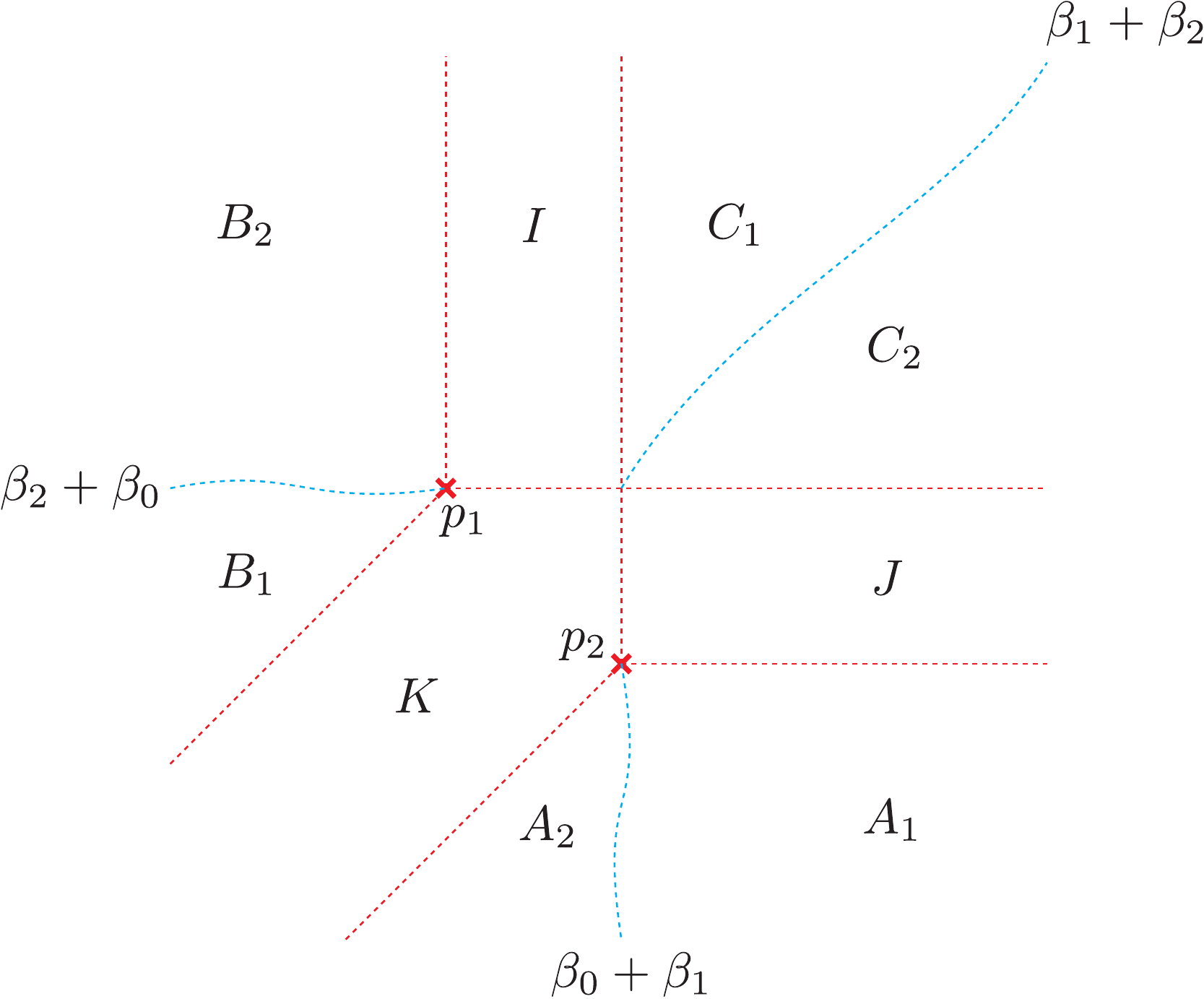}
    \caption{Full wall structure for the second order bulk-deformation}
		\label{fig:2ndorderfull}
\end{figure}

We first show that three chambers $I,J$ and $K$ in Figure \ref{fig:2ndorderfull} do not admit higher order contribution to $W_2$. Such chambers will be called \emph{initial chambers}, and serve as starting regions for computing $W_2$ in other chambers making use of wall-crossing maps.

\begin{lemma}\label{lem:w2ijk}
The second-order potential $W_2$ in chambers $I,J$ and $K$ does not involve terms with $t_1 t_2$, and
\begin{equation*}
\begin{array}{lcl}
W_2|_{I} &=& z_1 + z_2 + \dfrac{1}{z_1 z_2}  + t_1 z_1 z_2 +t_2 \dfrac{1}{z_1}, \\
W_2|_{J} &=&z_1 + z_2 + \dfrac{1}{z_1 z_2}  + t_1 \dfrac{1}{z_2} + t_2 z_1 z_2, \\
W_2|_{K} &=& z_1 + z_2 + \dfrac{1}{z_1 z_2} + t_1 \dfrac{1}{z_2} + t_2 \dfrac{ 1}{z_1}.
\end{array}
\end{equation*} 
\end{lemma}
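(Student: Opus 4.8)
The plan is to start from the decomposition already recorded in the text,
\[
W_2(z_1,z_2) = W_0(z_1,z_2) + t_1 F_1^1(z_1,z_2) + t_2 F_1^2(z_1,z_2) + t_1 t_2 F_2(z_1,z_2),
\]
and to treat the three pieces separately over each of $I$, $J$, $K$. Because $t_1^2 = t_2^2 = 0$, the coefficient $F_1^1$ of $t_1$ involves no dependence on $q_2$, so it is literally the single-point first-order potential of Proposition \ref{prop:firstorderWp2} with center $p_1$; likewise $F_1^2$ is the single-point potential centered at $p_2$. Thus the first step is purely combinatorial: locate each of $I$, $J$, $K$ inside the first-order chamber structure attached to $p_1$ and, independently, inside the one attached to $p_2$, and read off the associated monomial. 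Concretely, $I$ lies in the chamber of $p_1$ producing $z_1 z_2$ and in the chamber of $p_2$ producing $1/z_1$; $J$ lies in the chamber of $p_1$ producing $1/z_2$ and the chamber of $p_2$ producing $z_1 z_2$; and $K$ lies in the chambers producing $1/z_2$ and $1/z_1$ respectively. These assignments give exactly the $t_1$ and $t_2$ terms in the asserted formulas.

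The crux is to show that the mixed term $t_1 t_2 F_2$ vanishes over $I$, $J$, $K$. By definition $F_2$ counts generalized Maslov two discs constrained by \emph{both} $q_1$ and $q_2$, which are genuine Maslov four discs; by the Cho-Oh classification together with Lemma \ref{799} excluding multiple covers, their relative classes lie in $\{\beta_0 + \beta_1, \beta_1 + \beta_2, \beta_2 + \beta_0\}$. For each such class I would reuse the maximum-principle argument from Section \ref{subsec:W1comp}: a disc of class $\beta_i + \beta_j$ avoids the toric divisor indexed by the missing generator, so the two affine coordinates adapted to the corresponding chart are subharmonic on the disc and attain their interior maxima at the marked points over $q_1$ and $q_2$. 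Since those coordinates are constant on the boundary fiber $L_u$, this forces $u$ to dominate both $p_1$ and $p_2$ in the relevant directions, i.e. to lie strictly beyond the scattering/extended wall emanating from the intersection of the first-order walls (compare \eqref{eqn:estimatess} and Figures \ref{fig:2ndordern}, \ref{fig:2ndorderfull}). As $I$, $J$, $K$ are precisely the chambers bounded only by the first-order walls of $p_1$ and $p_2$ and sit on the near side of every scattering wall, no disc through both points bounds a fiber over them, so $F_2|_{I} = F_2|_{J} = F_2|_{K} = 0$. Combining the two steps yields the stated expressions.

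I expect the main obstacle to be the simultaneous bookkeeping of the confinement regions for all three primitive classes $\beta_i + \beta_j$: each class pushes the boundary fiber into a different corner of the log base, and one must verify that none of these corner regions meets any of the three initial chambers. This is the quantitative content behind Figure \ref{fig:2ndorderfull}, and making it rigorous reduces to checking the inequalities extracted from \eqref{eqn:estimatess}, and their two $\mathbb{Z}/3$-rotated analogues, against the explicit location of $I$, $J$, $K$ relative to $p_1$ and $p_2$.
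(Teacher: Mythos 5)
Your overall strategy---read the $t_1$ and $t_2$ terms off the first-order computation of Proposition \ref{prop:firstorderWp2} chamber by chamber, then kill the mixed term $t_1 t_2 F_2$ on $I,J,K$ via the Cho--Oh classification, Lemma \ref{799}, and the maximum principle---is the same as the paper's, and your first step (the chamber identifications giving the $t_1$ and $t_2$ monomials) is correct. But there is a genuine error at the crux: you misidentify which discs contribute to $F_2$. By Definition \ref{defn:bulk_potential}, a disc passing through both $q_1$ and $q_2$ carries $l=2$ interior marked points, so it contributes to the potential only if its generalized Maslov index $\mu(\beta)-2l$ equals $2$, i.e. $\mu(\beta)=6$, not $4$. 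The relevant classes are therefore $a_0\beta_0+a_1\beta_1+a_2\beta_2$ with $a_0+a_1+a_2=3$; after Lemma \ref{799} excludes multiple covers these are the six permutations of $(2,1,0)$, which is the list the paper works with. The classes $\beta_0+\beta_1,\ \beta_1+\beta_2,\ \beta_2+\beta_0$ that you propose to exclude are Maslov $4$ discs through both points, which have generalized Maslov index $0$: those are exactly the wall-generating discs of Section \ref{sec:9999} (the scattering and extended walls), not contributions to $W_2$. It appears you carried the class list over from the wall discussion preceding the lemma, where it is correct, into the potential computation, where it is not.

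Consequently, even if your maximum-principle confinement were carried out in full, it would show that no new walls pass through $I$, $J$, $K$---a true and relevant fact---but not the vanishing of $F_2$, which is what the lemma asserts. Repairing the proof means rerunning the exclusion for the six Maslov $6$ classes, and there the bookkeeping genuinely changes: the paper handles chamber $I$ by applying the maximum principle to $|z_0|$ (available once the disc misses one of $\{z_1=0\}$, $\{z_2=0\}$) to rule out four of the six classes, and then to $|z_1|$, compared against its value at the marked point over $q_2$, to rule out the remaining classes $(2,1,0)$ and $(1,2,0)$; chambers $J$ and $K$ are treated symmetrically. Your single template---``both affine coordinates of the boundary fiber must dominate both $p_1$ and $p_2$''---is tailored to the primitive classes $\beta_i+\beta_j$ and does not cover classes such as $2\beta_1+\beta_2$, where only part of the coordinate data is controlled and the multiplicities matter.
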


\begin{proof}
The additional terms in $W_2$ that do not appear in $W_1$ are contributed by Maslov index $6$ discs passing through $q_1$ and $q_2$. By the classification result, these discs are in the classes $a_0 \beta_0 + a_1 \beta_1 + a_2 \beta_2$ with $a_0 + a_1 + a_2 = 3$, where $(a_0, a_1, a_2)$ can takes values in
$$(2,1,0), (2,0,1), (1,2,0), (0,2,1), (1,0,2), (0,1,2),$$ 
as multiple covers are excluded by Lemma \ref{799} again.
 

Let us first look at the chamber $I$. By applying maximum principle to the function $|z_0|$, we concludes that the discs in classes involving $\beta_0$ of the form
$$(2,0,1), (0,2,1), (1,0,2), (0,1,2)$$ 
can not contribute to $W_2|_{I}$. This is because the function $|z_0|$ is well-defined whenever the discs avoid either of the divisors $\{|z_1| = 0\}$ and $\{ |z_2| = 0\}$. Moreover, the other two classes $(2,1,0), (1,2,0)$ are excluded by applying the maximum principle to the function $|z_1|$ and considering its value at $p_2$. Therefore $W_2|_I$ is only contributed by the Maslov $4$ discs passing through one of $p_1$ and $p_2$, which we have computed already. \\
\indent The same argument applies to the chambers $J$ and $K$.
\end{proof}

We next compute the wall-crossing map for each of walls drawn in Figure \ref{fig:2ndorderfull}.

\begin{prop}\label{prop:wcforw2}
Wall crossing maps are given as follows.
\begin{equation*}
\begin{array}{rlcl}
A_2 \to A_1: & (z_1,z_2) & \mapsto & (z_1(1+  t_1 t_2 \frac{1}{z_2}), z_2 ) \\
B_2 \to B_1: &(z_1,z_2) & \mapsto &  (z_1, z_2 (1+t_1 t_2 \frac{1}{z_1})^{-1} ) \\
C_2 \to C_1: & (z_1,z_2)& \mapsto & (z_1(1+ t_1 t_2 z_1 z_2), z_2 (1+ t_1 t_2 z_1 z_2)^{-1})  \\
A_1 \to J: & (z_1,z_2)& \mapsto & (z_1,z_2( 1+ t_2 z_1)) \\
K \to A_2: & (z_1,z_2) & \mapsto & (z_1 (1+ t_2 \frac{1}{z_1 z_2}), z_2 (1+ t_2 \frac{1}{z_1 z_2})^{-1}) \\
B_1 \to K: & (z_1,z_2)& \mapsto & (z_1 (1+ t_1 \frac{1}{z_1 z_2}), z_2 (1+ t_1 \frac{1}{z_1 z_2})^{-1}) \\
I \to B_2: & (z_1,z_2) & \mapsto & (z_1(1+ t_1 z_2)^{-1}, z_2) \\
J \to K \,\, \mbox{and} \,\, C_1 \to I: &(z_1,z_2) & \mapsto & (z_1(1+ t_2 z_2)^{-1}, z_2) \\
K \to I\,\, \mbox{and} \,\, J \to C_2: &(z_1,z_2)  & \mapsto & (z_1, z_2(1+ t_1 z_1))
\end{array}.
\end{equation*}
 
\end{prop}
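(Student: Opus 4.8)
The plan is to split the listed wall-crossing maps into two groups according to whether they carry a single bulk parameter $t_i$ or the product $t_1t_2$, and in both cases to exploit the compatibility \eqref{eqn:comptwf} of $W_2$ with wall-crossing together with Fukaya's trick (Proposition \ref{1086}). Multiple-cover contributions are excluded throughout by the generic choice of $q_1,q_2$ and Lemma \ref{799}.

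First I would dispose of the six first-order walls, namely those whose maps involve only one of $t_1,t_2$: $A_1\to J$, $K\to A_2$, $B_1\to K$, $I\to B_2$, $J\to K$ (and $C_1\to I$), and $K\to I$ (and $J\to C_2$). These are precisely the walls emanating from $p_1$ (carrying $t_1$) and from $p_2$ (carrying $t_2$), and the moduli responsible for each of them is the moduli of Maslov-2 discs passing through a single point constraint. Hence every such map is an instance of Proposition \ref{prop:wcmap1p2}, applied to $q_1$ or, by the identical argument, to $q_2$: the divisor axiom (Lemma \ref{lem:divisor}) turns the weighted count of discs in the relevant class $\beta_0,\beta_1$ or $\beta_2$ into the claimed cluster transformation, and one reads off the class and the sign $\epsilon_{\mathfrak{d}}=\mbox{sgn}\langle\phi',\partial[h]\rangle$ from the direction of crossing. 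Since the constraints $q_1,q_2$ are generic, the presence of the second point does not alter these maps: crossing a wall from $p_i$ only changes the count of discs through that single $q_i$, so only a single $t_i$ appears. The work here is the bookkeeping of matching each wall in Figure \ref{fig:2ndorderfull} with the correct generator and orientation.

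The substance of the proposition lies in the three second-order walls $A_2\to A_1$, $B_2\to B_1$ and $C_2\to C_1$, carrying $t_1t_2$, which I would determine by consistency rather than by a direct disc count. By Proposition \ref{1086}, any contractible loop in $\Int(P)\setminus\{p_1,p_2\}$ induces the identity on the weak Maurer--Cartan space; equivalently, the composition of all wall-crossing maps around each scattering vertex is the identity, exactly as in Lemma \ref{1078}. Around the vertex where a $t_1$-wall meets a $t_2$-wall, the two incoming first-order transformations from the previous step do not commute, and because $t_1^2=t_2^2=0$ truncates the scattering at a single order, their order-$t_1t_2$ commutator must be absorbed by exactly one outgoing wall. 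Solving this consistency equation for the new slab function yields the stated maps: the class $\beta_1+\beta_2$ gives the genuine scattering wall $C_2\to C_1$ emanating from the crossing of the $\beta_1$- and $\beta_2$-walls (cf. Figure \ref{fig:2ndordern}), while $A_2\to A_1$ and $B_2\to B_1$ are the extended walls, for which the gluing of a Maslov-4 disc through one point constraint with the other point constraint, described before the proposition and made precise in Proposition \ref{1301}, furnishes an independent check.

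The main obstacle will be the order-$t_1t_2$ consistency computation itself: one must compose the explicit first-order cluster maps of Proposition \ref{prop:wcmap1p2} (for both $q_1$ and $q_2$) around each vertex in the correct clockwise order, expand modulo $t_1^2=t_2^2=0$, and verify that the resulting discrepancy is precisely the single new wall-crossing map claimed, with the sign and the which-chamber-to-which-chamber conventions tracked consistently. The point that makes this finite and forces the formulas uniquely is that the nilpotency of the $t_i$ collapses the a priori infinite scattering product to one outgoing wall, so that matching the commutator to a single slab function is both possible and rigid.
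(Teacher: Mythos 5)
Your treatment of the first-order walls and of the scattering wall $C_2 \to C_1$ matches the paper's own proof: the six single-parameter maps are instances of Proposition \ref{prop:wcmap1p2} applied to $q_1$ or $q_2$, and the map $C_2\to C_1$ is obtained, exactly as in the paper, by composing the four known first-order maps around the intersection of the $\beta_1$-wall from $p_1$ with the $\beta_2$-wall from $p_2$, where Fukaya's trick (Proposition \ref{1086}) applies because a small loop around that vertex is contractible in $\Int(P)\setminus\{p_1,p_2\}$.

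The gap is in your derivation of the extended walls $A_2\to A_1$ and $B_2\to B_1$. These walls do \emph{not} emanate from a vertex where a $t_1$-wall meets a $t_2$-wall: they emanate from the point constraints $p_2$ and $p_1$ themselves (a Maslov $4$ disc through $q_1$ glues with a constant disc bubble at $q_2$ and continues downward from $p_2$, and symmetrically with the roles of $q_1,q_2$ exchanged). Around $p_1$ or $p_2$ the hypothesis of Proposition \ref{1086} fails, since the loop encircles a point constraint; indeed the remark following Proposition \ref{prop:wcmap1p2} emphasizes that the monodromy around such a point is nontrivial, so you cannot set any composition of wall-crossing maps around it equal to the identity. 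Moreover, at any vertex where an extended wall crosses a first-order wall the discrepancy is of order $t_1 t_2^2=0$ or $t_1^2 t_2=0$, so your consistency equations never constrain the extended-wall slab functions at all: the commutator argument produces only the single scattering wall. The paper pins down the extended walls by a different mechanism, namely the compatibility \eqref{eqn:comptwf} of $W_2$ with wall-crossing: it first shows $W_2$ has no $t_1t_2$-term on the initial chambers $I,J,K$ (Lemma \ref{lem:w2ijk}), transports these expressions into chamber $A$ across the two known first-order walls to obtain two distinct Laurent polynomials $W_{A_1}\neq W_{A_2}$, restricts the unknown map to the form $(z_1,z_2)\mapsto (z_1(1+t_1t_2 f),z_2)$ because $\partial(\beta_0+\beta_1)$ pairs trivially with the cycle in the $e_2$-direction, and then solves $W_{A_2}(z_1(1+t_1t_2 f),z_2)=W_{A_1}(z_1,z_2)$ to find $f=\frac{1}{z_2}$. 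Your citation of Proposition \ref{1301} (applied to the term $t_1\frac{1}{z_2}$ of $W_1(p_2)$) could also serve as a valid derivation, but you invoke it only as an ``independent check''; as written, your primary argument for these two maps does not go through, and one of these two mechanisms must be promoted to the actual proof.
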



\begin{proof}
Wall-crossing maps for the initial walls emanating from $p_1$ and $p_2$ are the same as before. We now consider the wall induced by the discs in the class $\beta_1 + \beta_2$. Let us denote this wall by $l_{12}$, and the corresponding wall-crossing map by $f_{12}$. 

We compare two different ways of composing the wall-crossing maps, one along the path (i) and the other along (ii) in Figure \ref{fig:2ndorderfull}. Since we know that there is no wall in the region $K$ and the composition of the wall-crossing maps around the intersection point of two walls from $p_1$ and $p_2$ should give the identity, we conclude that
$$f_{12} = f_{I C_1}\circ f_{KI} \circ  f_{JK} \circ f_{C_2 J} ( = f_{C_1 I}^{-1} \circ f_{KI} \circ f_{JK} \circ f_{J C_2}^{-1} ),$$ 
where $f_{AB}$ denotes the wall-crossing map from a chamber $A$ to $B$. This is because there are no singular fibers or point constraints in the region enclosed by this loop. Hence Fukaya's trick (Proposition \ref{1086}) can be applied. 

Note that the right hand side only consists of first-order wall-crossing maps which we have already computed. Direct computation of this composition of four maps gives the desired formula in the statement.

We next compute the wall-crossing map for the extended walls. 
Let us consider the contributions of discs in the class $\beta_0 + \beta_1$ that pass through $q_1$ and $q_2$. As explained above, this disc defines a wall going downward from $p_2$.
Recall that the potentials $W_J$ and $W_K$ on the chambers $J$ and $K$ do not involve the second order terms as in Lemma \ref{lem:w2ijk}. By applying the wall-crossing maps to $W_J$ and $W_K$ for the initial walls emanating from $p_2$, we obtain two Laurent polynomial on the chamber $A$, which are
\begin{equation*}
\begin{array}{lcl}
W_{A_1} &=& z_1 + z_2 + \dfrac{1}{z_1 z_2} + t_1 \dfrac{1}{z_2} + t_2 \dfrac{1}{z_2} + t_1 t_2 \dfrac{z_1}{z_2},\\
W_{A_2} &=& z_1 + z_2 + \dfrac{1}{z_1 z_2} +t_1 \dfrac{1}{z_2} + t_2 \dfrac{1}{z_2} + t_1 t_2 \dfrac{1}{z_1 z_2^2}.
\end{array}
\end{equation*}
We remark that as these two expressions do not agree, there should be additional walls in $A$, which is one of indirect ways to show the existence of this extended wall.



First one notices that the boundary of $\beta_0 + \beta_1$ does not intersect the cycle in $e_2$-direction generically. 
Therefore, we see that the wall-crossing map $\varphi$ should be of the form
$$ \varphi: (z_1, z_2) \mapsto (z_1 ( 1+ t_1 t_2 f), z_2)$$
for some Laurent polynomial $f$. Also, the expressions $W_{A_1}$ and $W_{A_2}$ should be compatible with $\varphi$ due to $A_\infty$-relations \eqref{eqn:comptwf}. Namely, we have
$$W_{A_2} ( z_1 (1+t_1 t_2 f), z_2) = W_{A_1} (z_1,z_2).$$
Direct computation of the left hand side shows
$$W_{A_2} ( z_1 (1+t_1 t_2 f), z_2) = z_1 + z_2 + \frac{1}{z_1 z_2} + t_1 \frac{1}{z_2} + t_2 \frac{1}{z_2} + t_1 t_2 \left(  z_1 f - \frac{1}{z_1 z_2} f + \frac{1}{z_1 z_2^2} \right).$$
Using the relation $t_i^2=0$, this implies
$$z_1 f - \frac{1}{z_1 z_2} f + \frac{1}{z_1 z_2^2}= \frac{z_1}{z_2}$$
by comparing with $A_1$. Therefore $f(z_1,z_2) = \frac{1}{z_2}$, and 
$$ \varphi: (z_1, z_2) \mapsto \left(z_1 \left( 1+ t_1 t_2 \frac{1}{z_2} \right), z_2 \right).$$

The other wall-crossing maps  can be deduced similarly.
\end{proof}

We are now ready to compute $W_2$ in other chambers besides chambers $I,J,K$. 

\begin{prop}
Indexed by the labeling of chambers as in Figure \ref{fig:2ndorderfull}, the second-order potential $W_2$ can be written as follows.
\begin{equation*}
\begin{array}{lcl}
W_2|_{A_1} &=& z_1 + z_2 + \dfrac{1}{z_1 z_2} + t_1 \dfrac{1}{z_2} + t_2 \dfrac{1}{z_2} + t_1 t_2 \dfrac{z_1}{z_2} \\
W_2|_{A_2} &=& z_1 + z_2 + \dfrac{1}{z_1 z_2} +t_1 \dfrac{1}{z_2} + t_2 \dfrac{1}{z_2} + t_1 t_2 \dfrac{1}{z_1 z_2^2} \\
W_2|_{B_1} &=& z_1 + z_2 + \dfrac{1}{z_1 z_2}+ t_1 \dfrac{1}{z_1} + t_2 \dfrac{1}{z_1} + t_1 t_2  \dfrac{1}{z_1^2 z_2}\\
W_2|_{B_2} &=& z_1 + z_2 + \dfrac{1}{z_1 z_2} + t_1 \dfrac{1}{z_1} + t_2 \dfrac{1}{z_1} + t_1 t_2 \dfrac{z_2}{z_1} \\
W_2|_{C_1} &=& z_1 + z_2 + \dfrac{1}{z_1 z_2}+ t_1 z_1 z_2 + t_2 z_1 z_2 + t_1 t_2 z_1 z_2^2 \\
W_2|_{C_2} &=& z_1 + z_2 + \dfrac{1}{z_1 z_2} + t_1 z_1 z_2 + t_2 z_1 z_2 + t_1 t_2 z_1^2 z_2
\end{array}
\end{equation*}
\end{prop}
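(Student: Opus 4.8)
The plan is to propagate the potential outward from the three initial chambers $I$, $J$, $K$, where $W_2$ is already pinned down in Lemma \ref{lem:w2ijk}, using the wall-crossing maps recorded in Proposition \ref{prop:wcforw2}. The governing principle is the $A_\infty$-compatibility \eqref{eqn:comptwf}: whenever two chambers are separated by a single wall $\mathfrak{d}$ with wall-crossing map $\varphi_{\mathfrak{d}}$, the two expressions of $W_2$ are related by pullback along $\varphi_{\mathfrak{d}}$, exactly as in the first-order computation of Proposition \ref{prop:firstorderWp2}. Since every map in Proposition \ref{prop:wcforw2} is an upper-triangular cluster transformation whose inverse, modulo the relations $t_i^2=0$, is obtained simply by replacing each factor $1+t_i g$ with $1-t_i g$, every crossing reduces to a monomial substitution followed by truncation in the coefficient ring.

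Concretely, first I would reach the ``outer'' sub-chambers by a single crossing from an adjacent initial chamber. For instance $W_2|_{A_1}$ is obtained from $W_2|_J$ by pulling back along the wall between $J$ and $A_1$, i.e. the substitution $z_2 \mapsto z_2(1+t_2 z_1)^{-1}=z_2(1-t_2z_1)$; after truncation the two $t_2 z_1 z_2$ contributions cancel and one is left with $z_1+z_2+\frac{1}{z_1z_2}+t_1\frac{1}{z_2}+t_2\frac{1}{z_2}+t_1t_2\frac{z_1}{z_2}$, as claimed. The chambers $B_1$ and $C_1$ are handled identically, starting from $K$ and $I$ respectively. Next I would reach the ``inner'' sub-chambers $A_2$, $B_2$, $C_2$ across the scattering and extended walls: for example $W_2|_{A_2}=W_2|_{A_1}\circ\varphi_{A_2\to A_1}^{-1}$, where $\varphi_{A_2\to A_1}\colon z_1\mapsto z_1(1+t_1t_2\frac{1}{z_2})$ is the extended-wall map. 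Here the only net effect is to trade the monomial $t_1t_2\frac{z_1}{z_2}$ for $t_1t_2\frac{1}{z_1z_2^2}$, reproducing the stated $W_2|_{A_2}$; the remaining inner chambers follow the same pattern.

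The essential structural point, rather than the arithmetic, is \emph{consistency}: each inner sub-chamber can also be reached directly from a different initial chamber (e.g. $A_2$ from $K$ via $\varphi_{K\to A_2}$), and the two routes must yield the same Laurent polynomial. This is guaranteed by Fukaya's trick, Proposition \ref{1086}: the composite of wall-crossing maps around any vertex of the wall structure enclosing no point constraint is the identity, which is precisely the input that was used in Proposition \ref{prop:wcforw2} to determine the scattering map and the extended-wall maps in the first place. Thus the six values are forced and well-defined independently of the chosen sequence of chambers, and each closed loop of crossings provides a built-in check on the computation.

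The main obstacle I anticipate is not the substitutions themselves but ensuring that Proposition \ref{prop:wcforw2} records a \emph{complete} list of walls, so that no hidden higher-order wall is traversed unnoticed along any propagation path; equivalently, that the wall-crossing maps genuinely close up around every vertex in the sense of Lemma \ref{1078}. Once completeness is granted, the remaining work is purely bookkeeping --- tracking the orientation of each crossing and applying the correct map or its inverse modulo $t_i^2=0$ --- and the closure of all loops guarantees that the resulting expressions for $W_2$ in the six chambers are mutually consistent.
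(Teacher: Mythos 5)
Your proposal matches the paper's proof, which is exactly this: starting from the initial chambers $I$, $J$, $K$ of Lemma \ref{lem:w2ijk} and applying the wall-crossing maps of Proposition \ref{prop:wcforw2} compatibly with \eqref{eqn:comptwf} to propagate $W_2$ into the six remaining chambers. Your sample computations (e.g.\ $W_2|_{A_1}=W_2|_J\circ(z_2\mapsto z_2(1+t_2z_1)^{-1})$ and $W_2|_{A_2}=W_2|_{A_1}\circ\varphi_{A_2\to A_1}^{-1}$) are carried out with the correct orientation of the substitutions modulo $t_i^2=0$, and your appeal to Fukaya's trick for path-independence is precisely the consistency the paper relies on.
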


\begin{proof}

Recall that $W_2$ in different chambers are related compatibly with the wall-crossing maps.
Hence, having the formulae for $W_2|_{I}$, $W_2|_{J}$ and $W_2|_{K}$, the 2nd order potential in the other six chambers are simply obtained by applying the wall-crossing formulae in Proposition \ref{prop:wcforw2}. 
\end{proof}

The first and second-order potentials $W_1, W_2$ obtained from holomorphic disc counting precisely matches the tropical counting in \cite{G7}, apart from the fact that the new wall appearing in the second stage are not straight lines with respect to the standard affine structure on $\R^2$. 
\subsection{Wall-crossing via Floer theoretical isomorphism}
There is another way to capture the wall-crossing for the bulk-deformed potential using isomorphisms between  Lagrangian tori as objects of the Fukaya category. 

Let us consider a wall-crossing locally around the vertical wall $l_2$ in $\mathbb{P}^2$ induced by a generic point-constraint $q$ as in (a) of Figure \ref{fig:localconic}. 
Since two different torus fibers do not intersect, it is hard to make sense of isomorphisms between them. For this reason, we perturb torus fibers with help of the trivial conic fibration defined in (b) of Figure \ref{fig:localconic}. Such an idea was first appeared in \cite{Seinote}.

Two Lagrangians $\mathbb{L}_1$ and $\mathbb{L}_2$ that intersect are analogues of the torus fibers below/above the wall. We equip these two Lagrangians with flat connections $\nabla_1$ and $\nabla_2$ whose holonomies are parametrized by $(z_1,z_2)$ and $(z_1',z_2')$, respectively. More precisely, $z_1,z_1'$ are holonomies along the direction of the fiber circles of the conic fibration in the Lagrangians, and $z_2,z_2'$ are along that of the base circles.

\begin{figure}[htb!]
    \includegraphics[scale=0.45]{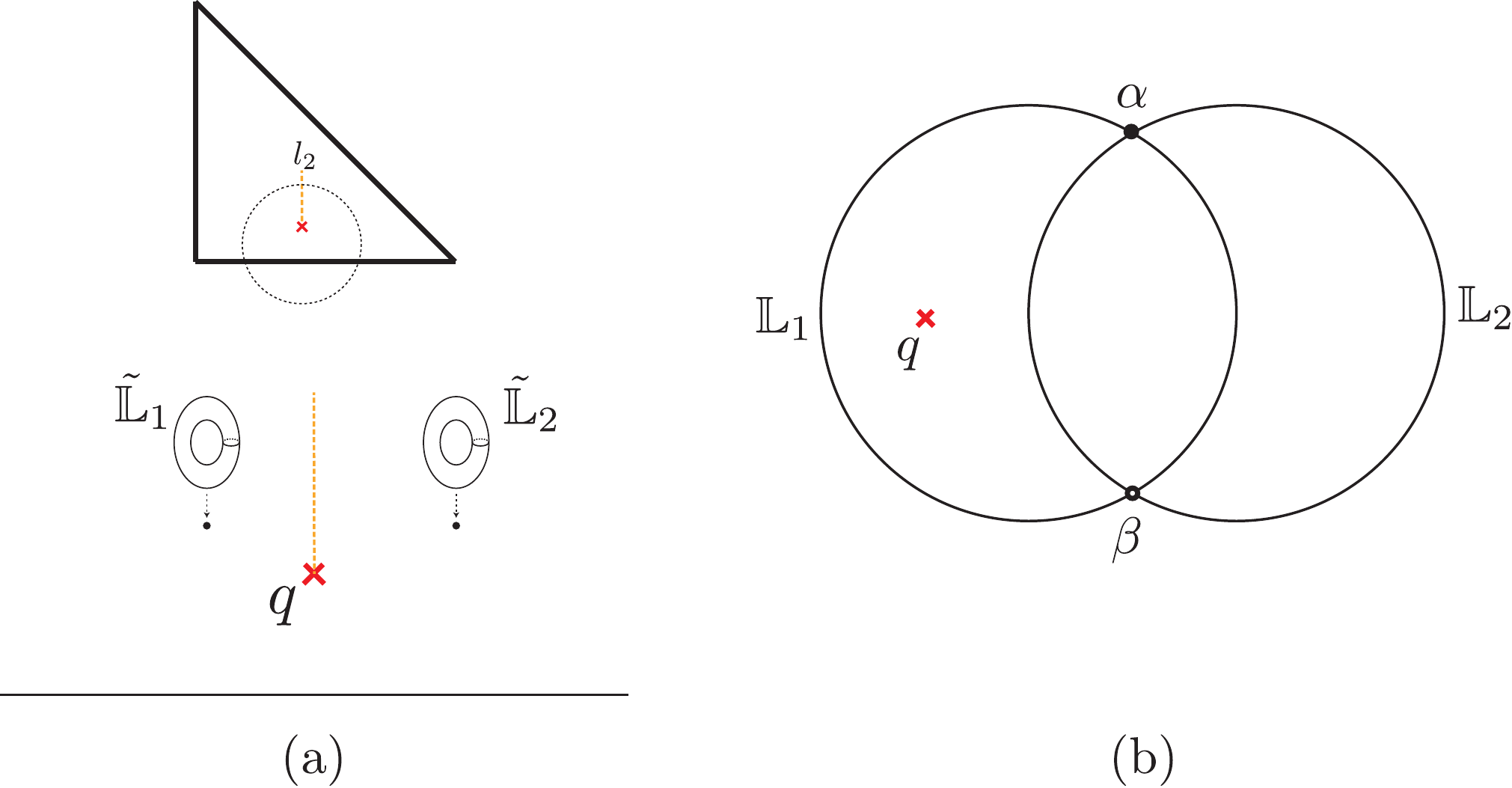}
    \caption{Two disjoint fibers $\tilde{\mathbb{L}}_1$ and $\tilde{\mathbb{L}}_2$ in (a) are deformed into $\mathbb{L}_1$ and $\mathbb{L}_2$ in (b).}
		\label{fig:localconic}
\end{figure}

Let $CF((\mathbb{L}_1, \nabla_1),(\mathbb{L}_2, \nabla_2) )$ be the Floer complex. In what follows, we will find the conditions under which the two objects $(\mathbb{L}_1, \nabla_1)$ and $ (\mathbb{L}_2, \nabla_2)$
in the Fukaya category are isomorphic to each other. 
%
As the two Lagrangians intersect along two disjoint circles, one chooses generic Morse functions on these circle to perturb the intersections. The direct sum of two Morse complexes will serve as a model for $CF((\mathbb{L}_1, \nabla_1),(\mathbb{L}_2, \nabla_2) )$. We denote the corresponding Floer differential by $d$ which counts pearl trajectories, which can possibly pass through $q$.

Notice that the degrees are shifted by $1$ for the Morse complex of one of the circles due to Floer theoretic intersection of two base circles in (b) of Figure \ref{fig:localconic}. There exists a unique degree 0 element, say $\alpha_{12}$, 
and there are two degree 1 elements, one lying in the same circle with $\alpha_{12}$ and the other lying in the other circle. We denote them by $\overline{\alpha_{12}}$ and $\beta_{12}$, respectively. Under the conic fibration, $\alpha_{12}$ and $\overline{\alpha_{12}}$ project to $\alpha$ in Figure \ref{fig:localconic}, where $\beta_{12}$ project to $\beta$.

We first count the strips contributing to coefficients of $\beta_{12}$ in $d(\alpha_{12})$. In Figure \ref{fig:L1L2}, we observe that there are two such strips, but they are weighted by different holonomies. Their contributions sum up to
$$ z_2 \beta_{12} - z_2' \beta_{12},$$
where the signs can be chosen in suitable way by adjusting the definitions of $\alpha,\beta,z$'s.

The coefficients of $\overline{\alpha_{12}}$ in $d(\alpha_{12})$ involves more complicated contributions. Two Morse flows with weighted by different holonomies give rise to $(z_1 - z_1') \overline{\alpha_{12}}$. Furthermore, there is a U-shape strip drawn in (b) of Figure \ref{fig:L1L2}. Originally, the moduli space of such strips is not isolated, but we pick precisely one in the moduli by requiring the strip to pass through $q$. 
This strip induces the terms $t z_2' z_1 T^\epsilon$ in $d(\alpha_{12})$ where $\epsilon$ is the symplectic area of the strip. $t$ appears in the term since this strip admits a nontrivial bulk-deformation by $t q$. Here, $t^2=0$ is crucial, as otherwise one would have to count more complicated strips of higher indices that wrap around the same region several times. 

\begin{remark}
This indeed depends on the position of $q$ along the conic fiber. If the point $q$ is away from certain range, all such $U$-shaped strips can avoid $q$, and hence do not contribute to $d$. This is analogous to perform Fukaya's trick for the path passing the region below $q$ in (a) of Figure \ref{fig:localconic} where there is no wall.
\end{remark}

\begin{figure}[htb!]
    \includegraphics[scale=0.4]{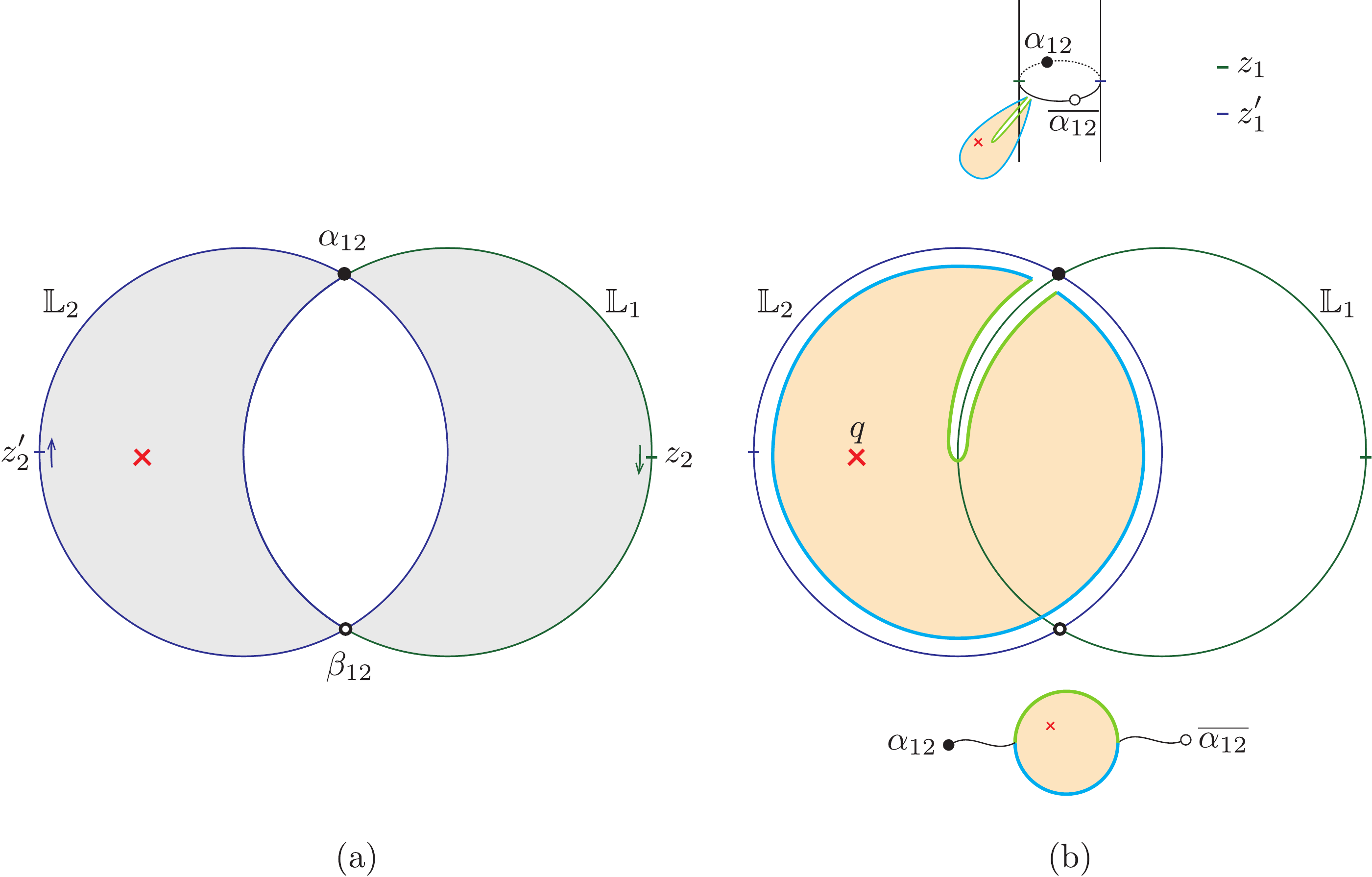}
    \caption{Holomorphic strips counted for the Floer differential of $\alpha_{12}$}
		\label{fig:L1L2}
\end{figure}

In total, we have
$$d(\alpha_{12}) = (z_2 - z_2')\beta_{12} + (z_1 - z_1' + t z_2' z_1 T^{\epsilon} )\overline{\alpha_{12}},$$
and $\alpha_{12}$ is $d$-closed if and only if
\begin{equation*}
\left\{
\begin{array}{l}
z_2' = z_2 \\
z_1' = z_1 ( 1+ t z_2 T^{\epsilon}).
\end{array}\right.
\end{equation*}
One can also check that $(\mathbb{L}_1, (z_1,z_2))$ and $(\mathbb{L}_2, (z_1',z_2'))$ are isomorphic under this condition. In fact, the element $\overline{\beta_{12}},$ viewed as a morphism in the opposite direction, gives an inverse of $\alpha_{12}$. Notice that the formula obtained this way coincides with our earlier computation in Proposition \ref{prop:wcmap1p2}.

\section{Tropical-Holomorphic Correspondence}\label{sec:HTcorres}
\indent Based on the computations of initial walls for $W_0(u)$ and $W_1(u),$ one observes that the holomorphic walls are in general \textit{nonlinear}, or not straight lines in the affine base. Due to this ambiguity of the wall structures of $W_k(u)$ for $k \geq 2,$ it is difficult to calculate $W_k(u)$ for all $k$ directly in holomorphic setting, i.e., by the wall-crossing techniques given in Proposition \ref{1086}. One solution to resolve this issue is to consider the \textit{rescaled} bulk-deformed potential defined as follows. 
\begin{defn}\label{def:holo_Wk}
Given $k$ generic marked points $q_1, \cdots, q_k$ in $(\C^*)^2$ and fixed Lagrangian torus fiber $L_u$ for each $t \gg 1,$
\begin{enumerate}
\item[(1)] the $k$th order rescaled bulk-deformed potential is defined as
$$W_k^t (u):=W^{H_t^{-1}(q_1),\cdots H_t^{-1}(q_k)}_k(H^{-1}_t(u)),$$
where $W^{H_t^{-1}(q_1),\cdots H_t^{-1}(q_k)}_k(H^{-1}_t(u))$ is the counts of generalized Maslov two discs subject to the $k$-point constraints given by ${\textbf{q}}_t$ and the condition that $\mu(\beta)=2k+2$ as in Definition \ref{defn:bulk_potential}.
\item[(2)] The \textit{the holomorphic walls} $\mathcal{W}^{holo}_t$ is defined to be the loci in the Log base $\R^2$ over which the Lagrangian torus fiber $L_u$ bounds generalized Maslov zero $J$-holomorphic discs subject to the constraint $\tilde{\textbf{q}}_t:=(H_t^{-1}(q_1), \cdots, H_t^{-1}(q_k))$.
\end{enumerate}
\end{defn}


In the following section, we will prove that as $t \rightarrow \infty,$ the holomorphic walls defined by this rescaled bulk-deformed potential $W_k^t (u)$ converges to that of the tropical walls given by straight line in the affine base.
(Recall from \eqref{eqn:estimatess} that we have seen such a feature in the computation of $W_2$ for $\mathbb{P}^2$.)
Hence at least for $t \gg 1,$ one still explicitly determine the wall structures and compute $W_k^t (u)$ for all $k$ via tropical geometry. In the subsequent discussions, all the $k$th bulk-deformed potentials are taken to be the \textit{rescaled} bulk-deformed potentials $W_k^t (u)$. If there are no further confusions, we will denote the rescaled bulk-deformed potentials as $W_k(u)$ as well.


\subsection{An anti-symplectic involution of $ (\C^*)^2$} 
 It is an important observation of Nishinou \cite{N2} that every holomorphic disc with boundary on a moment map torus fiber can be extended to a rational curve. 
 We observe that such an extension can be realized making use of an anti-symplectic involution of $X\backslash D \cong (\C^*)^2$, which does not extend to the entire toric Fano surface $X$.
After a suitable translation, one can identify the given moment torus fiber $L$ with $S^1(1)\times S^1(1)\subseteq (\mathbb{C}^*)^2$ and $T^*L\cong (\mathbb{C}^*)^2$. Notice that $L$ is the fixed locus of the following anti-holomorphic involution 
   \begin{align} \label{1005}
      \tilde{\sigma}:(z_1,z_2)\mapsto \left(\frac{1}{\bar{z}_1},\frac{1}{\bar{z}_2}\right).
   \end{align} 
  
Making use of \eqref{1005}, any holomorphic disc $f:(D^2,\partial D^2)\rightarrow (X,L)$ can be doubled to give a rational curve. More precisely, one can double the open part of the holomorphic disc in $(\mathbb{C}^*)^2$ and extend it to a rational curve via valuation criterion of properness. Although the map (\ref{1005}) does not extend to $X$, we will still call a rational curve fixed by $(\ref{1005})$ a \textit{real rational curve} throughout the section.
   
On the other hand, one can obtain a tropical curve from a given tropical disc by extending its affine edge that is adjacent to the end. Readers are warned that it is \textit{not} analogous to the above procedure of extending the holomorphic disc to a rational curve. Instead, one notices that there is the identity 
       \begin{align*}
          Log\circ \tilde{\sigma}=-Log.
       \end{align*}  
The corresponding operation in tropical geometry is to double the tropical disc via $\mathbb{Z}_2$ reflection symmetry with respect to the end.
  \begin{remark}
     The involution $\tilde{\sigma}$ is the composition of the complex conjugation and a Cremona transformation. Note that the latter is only a birational map. In particular, $\tilde{\sigma}$ cannot be extended globally to $X$ and is not a restriction of a global anti-symplectic involution on $X$. It is also worth keeping in mind that the other half of the doubled disc can have different Maslov index from the original disc, and is always torically non-transverse. 
  \end{remark}     
        
\begin{example}
    Given a Maslov index four disc in $\mathbb{P}^2$. Without loss of generality, we may assume that the image is contained in $\mathbb{C}\subset \mathbb{P}^2$ after a suitable coordinate change. By the classification theorem in \cite{CO}, one can parametrize the disc as 
       \begin{align*}
          \{z| |z|\leq 1\}\mapsto \left(\rho_1z,\rho_2 \frac{z-\alpha}{1-\bar{\alpha}z} \right)\in \mathbb{C}^2\subseteq \mathbb{P}^2,
       \end{align*} 
       up to domain automorphisms, and where $|\rho_1|=|\rho_2|=1$ and $|\alpha|\leq 1$. The image of the disc satisfies an algebraic equation 
          \begin{align}\label{eqn:rho1rho2ext}
             \frac{y}{\rho_2} \left(1-\bar{\alpha}\frac{x}{\rho_1} \right)=\frac{x}{\rho_1}-\alpha.
          \end{align} 
                    \begin{figure}[htb!]
    \includegraphics[scale=0.3]{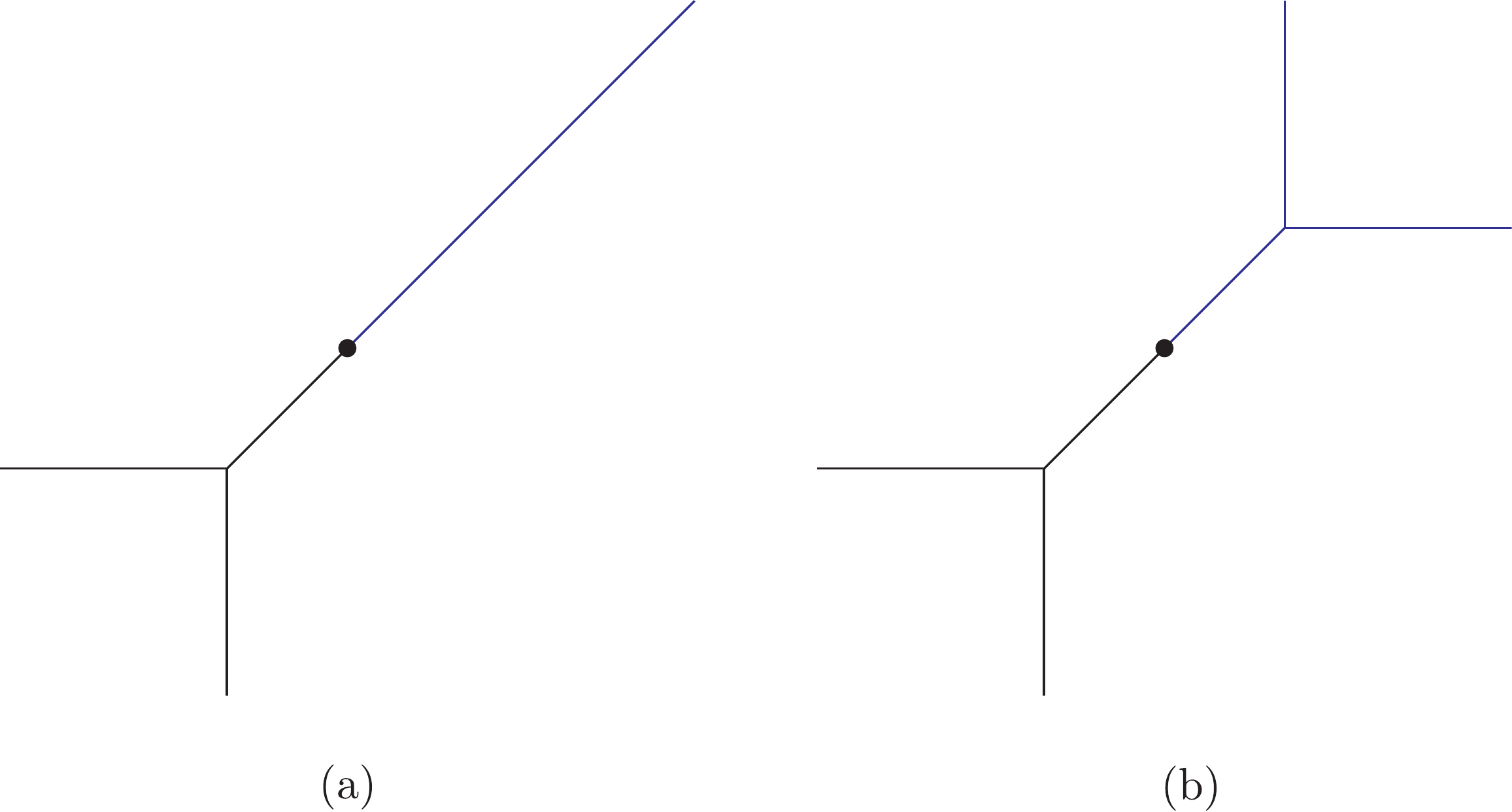}
    \caption{The tropicalization of the extended rational curve}
		\label{fig:extendedcurve}
\end{figure}
          This is a degree two curve in $\mathbb{P}^2$ which is set-theoretically fixed by $\tilde{\sigma}$. On the other hand, the naive extension of the tropical disc gives a degree one tropical rational curve of $\mathbb{P}^2$ (see (a) of Figure \ref{fig:extendedcurve}). 
          
          The tropicalization of the extended rational curve \eqref{eqn:rho1rho2ext} is depicted in (b) of Figure \ref{fig:extendedcurve}. 
          Two unbounded edges are not in the directions in the fan of $\mathbb{P}^2,$ because the extended rational curves always hit the $0$-dimensional toric stratum and are never torically transverse.
          In general, a holomorphic disc intersecting the coordinate lines of $\mathbb{P}^2$ $a,b,c$ times will be extended to a rational curve of degree $a+b+c$.  
\end{example}

 A priori, there might be real rational curves without real loci. 
However, the following lemma excludes this possibility.
Although we will not use this fact, we present it here, as it gives a good way to understand real curves as doublings of holomorphic discs.

 \begin{lemma}\label{1300}
   Any real irreducible rational curve admits non-trivial real locus. In other words, every real rational curve is a doubling of a holomorphic disc with boundary on a moment torus fiber.  
 \end{lemma}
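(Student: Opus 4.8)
The plan is to pass to the normalization $n\colon \mathbb{P}^1 \to \overline{C}$ of the projective, irreducible, rational curve $\overline{C}\subseteq X$ and transport the real structure there. Since $\tilde{\sigma}$ preserves $\overline{C}\cap (\C^*)^2$ and is anti-holomorphic, it restricts to an anti-holomorphic involution of the dense open locus where $n$ is an isomorphism onto its image; because the normalization is finite and birational, this lifts to an anti-holomorphic involution $\tau\colon \mathbb{P}^1 \to \mathbb{P}^1$ characterized by $n\circ\tau = \tilde{\sigma}\circ n$ on $n^{-1}((\C^*)^2)$, extended over the finitely many remaining points by properness. Writing $w_i = z_i\circ n$ for the two toric coordinate functions, the defining relation \eqref{1005} becomes the functional equation $w_i\circ\tau = 1/\overline{w_i}$.

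Next I would invoke the classification of anti-holomorphic involutions of $\mathbb{P}^1$: up to holomorphic conjugacy $\tau$ is either complex conjugation $\zeta\mapsto\overline{\zeta}$, whose fixed locus is a circle, or the antipodal map $\zeta\mapsto -1/\overline{\zeta}$, which is fixed-point free. In the first case the lemma follows at once: for $\zeta\in\mathrm{Fix}(\tau)$ one has $\tilde{\sigma}(n(\zeta)) = n(\tau\zeta) = n(\zeta)$, so $n(\mathrm{Fix}(\tau))\subseteq \mathrm{Fix}(\tilde{\sigma}) = L$. Thus the fixed circle maps into $L$, it divides $\mathbb{P}^1$ into two hemispheres interchanged by $\tau$, and the restriction of $n$ to either closed hemisphere is exactly a holomorphic disc with boundary on $L$ whose double is $\overline{C}$. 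This simultaneously realizes $\overline{C}$ as a doubling and exhibits its non-trivial real locus.

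The whole content of the lemma therefore lies in excluding the antipodal, fixed-point-free case, and this is the step I expect to be the main obstacle. The natural tool is a parity count. If $\xi_0$ is an $\iota$-fixed regular value of a Laurent monomial $w^{m} = w_1^{m_1}w_2^{m_2}$, where $\iota(w)=1/\overline{w}$ is the target involution with $\mathrm{Fix}(\iota)$ the unit circle, then $(w^{m})^{-1}(\xi_0)$ is $\tau$-invariant, and a free $\tau$ forces it to split into $\tau$-pairs; hence $\deg(w^{m})$ would be \emph{even} for every $m$. One then seeks a monomial of odd degree, using $\deg(w^{m}) = \tfrac12\sum_{p\in n^{-1}(D)}\lvert\langle m, v_p\rangle\rvert$, where $v_p$ records the orders of vanishing of $(w_1,w_2)$ along the toric boundary $D$. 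The delicate point is that a free $\tau$ pairs these directions as $v_{\tau p} = -v_p$, so the residues $v_p \bmod 2$ can cancel in pairs and bare irreducibility alone need not produce an odd-degree monomial; some additional input on how $\overline{C}$ meets the toric boundary (torical transversality, or the parity of an intersection number $\overline{C}\cdot D_i$ forced by the curve arising as a genuine doubling) is required. Pinning down this last input, i.e. ruling out the ``anisotropic'' real structure on the normalization, is the crux of the argument; everything else is formal.
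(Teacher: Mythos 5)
You have put your finger on the gap yourself, and it is a genuine one: your Steps 1--2 (lifting $\tilde{\sigma}$ to an anti-holomorphic involution $\tau$ of the normalization, and disposing of the case where $\mathrm{Fix}(\tau)$ is a circle) are fine, and Step 1 is essentially the first claim in the paper's proof; but the entire content of the lemma is the exclusion of the fixed-point-free $\tau$, which your proposal does not achieve. Moreover, the parity mechanism you outline cannot be completed: since $v_{\tau p}=-v_p$, choosing one representative $p_i$ from each $\tau$-pair gives $\deg(w^m)=\sum_i \left|\langle m,v_{p_i}\rangle\right|\equiv \langle m,\textstyle\sum_i v_{p_i}\rangle \pmod 2$, and nothing about irreducibility or $\tilde{\sigma}$-invariance prevents $\sum_i v_{p_i}$ from being even in both coordinates, in which case every monomial has even degree and the criterion is silent. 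The missing ingredient is not arithmetic (mod $2$) data at the toric boundary; it has to be a global topological input exploiting the specific shape of $\tilde{\sigma}$.

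That input is the identity $\mathrm{Log}\circ\tilde{\sigma}=-\mathrm{Log}$, and the paper's proof runs entirely through it, never classifying involutions at all. It gives $\mathrm{Fix}(\tilde{\sigma})=L=\mathrm{Log}^{-1}(0,0)$, so the lemma reduces to showing $(0,0)\in \mathrm{Log}(C)$. Suppose not. Pick $x\in C$ and a path $\phi$ in $C$ from $x$ to $\tilde{\sigma}(x)$; then $\ell=\phi\cup\tilde{\sigma}(\phi)$ is a loop whose $\mathrm{Log}$-image $\gamma$ lies in $\mathbb{R}^2\setminus\{(0,0)\}$ and satisfies $\gamma(t+\tfrac{1}{2})=-\gamma(t)$, hence has odd --- in particular nonzero --- winding number about the origin (a Borsuk--Ulam-type fact). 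On the other hand, every loop on a rational curve in $(\mathbb{C}^*)^2$ has winding number zero: lift $\ell$ to $\mathbb{P}^1\setminus S$, the normalization of the closure punctured at the preimage $S$ of the toric boundary; $\pi_1(\mathbb{P}^1\setminus S)$ is generated by small loops around the punctures, and near a puncture some $|z_i|$ tends to $0$ or $\infty$, so the $\mathrm{Log}$-image of such a loop lies in a half-plane avoiding the origin and winds zero times. This contradiction shows $C$ meets $L$, i.e.\ the real locus is nonempty; combined with your conjugation-case analysis this yields the doubling statement. (The paper compresses the last step to ``the loop is non-contractible, contradicting rationality''; the puncture bookkeeping above is what makes that precise, since the normalization of the affine curve is a punctured sphere with nontrivial $\pi_1$.) Note that this argument, being an integer winding-number count rather than a mod-$2$ count, is immune to exactly the cancellation that defeats your parity approach.
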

 \begin{proof}
   Given an irreducible rational real curve $C\subseteq (\mathbb{C}^*)^2$ with respect to $\sigma$, we claim that there exists an involution $\sigma$ on the normalization $\tilde{C}$ of $C$ such that $p \circ \sigma =\tilde{\sigma}\circ p$, where $p:\tilde{C}\rightarrow C$ is the normalization. Let $\bar{C}$ be the conjugate holomorphic curve of $C$. Then there exists morphisms $\tau:C\rightarrow \bar{C}$ and $\bar{\tau}:\bar{C}\rightarrow C$. From the universal property of the normalization, there exists a natural lifting of $\tau\times \bar{\tau}$ which makes the following diagram commutative
   \begin{align*}
      \xymatrix{
          \tilde{C}\times \tilde{\bar{C}} \ar[d]\ar[r] & \bar{\tilde{C}}\times C \ar[d] \\
          C\times \bar{C}\ar[r]_{\tau\times \bar{\tau}}& \bar{C}\times C
                   } .
   \end{align*} 
Here, we used the fact that $\tilde{\bar{C}}=\bar{\tilde{C}}$ and the product of normalization is the normalization of the product. Then the lifting of $\tau\times \bar{\tau}$ is independent of the second factor. Indeed, this is true on a Zariski open subset of $\tilde{C}$ and thus holds in general. The claim is proved. 
    
 Then it suffices to prove that $\mbox{Log}(C)$ contains $(0,0)$. Otherwise, let $\tilde{C}$ be the normalization of $C$. There exists $x\in \tilde{C}$ such that $\mbox{Log}(\tilde{\sigma}(x))\neq \mbox{Log}(x)$. Since $C$ is path connected, there exists a path $\phi$ connecting $x$ and $\tilde{\sigma}(x)$. Then $\tilde{\sigma}(\phi)\cup \phi$ gives a loop in $\tilde{C}$. The loop under Log map is non-contractible in $\mathbb{R}\backslash \{(0,0)\}$ and hence, so is the loop in $\tilde{C}$. This contradicts the fact that $\tilde{C}$ is a rational curve. 
 \end{proof}
 Similarly, one has a tropical version of Lemma \ref{1300}, whose proof is elementary.
\begin{lemma} \label{1036}
   Let $(h,T,w)$ be a real tropical curve with respect to $\sigma$, then $(0,0)$ is in the image $h(T)$. 
\end{lemma}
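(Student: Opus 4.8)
The plan is to exploit the reflection symmetry directly on the domain tree, in parallel with the loop argument used for Lemma \ref{1300}, but taking advantage of the fact that the tropical domain is literally a tree. By definition a real tropical curve $(h,T,w)$ with respect to $\sigma$ comes equipped with a simplicial involution $\iota \colon T \to T$ satisfying $h \circ \iota = \sigma \circ h = -h$, since $\sigma$ is the reflection $v \mapsto -v$ on $M_{\R} = \R^2$ (recall $Log \circ \tilde{\sigma} = -Log$, so that the analogue of $\tilde{\sigma}$ acts tropically as $-\mathrm{id}$). The first thing I would record is that this $\iota$ is exactly the tropical shadow of the involution on the normalization $\tilde{C}$ produced in the proof of Lemma \ref{1300}.

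The cleanest route is then a fixed-point argument. Since $\iota$ has order dividing $2$, it generates a finite subgroup of $\Aut(T)$, and a finite group acting on a tree fixes a point of the geometric realization --- either a vertex or the midpoint of an edge that $\iota$ flips. Calling such a fixed point $p$, we obtain $h(p) = h(\iota(p)) = -h(p)$, hence $h(p) = (0,0)$, so that $(0,0) \in h(T)$, as claimed.

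Alternatively, to stay strictly parallel to Lemma \ref{1300}, I would argue by contradiction, assuming $(0,0) \notin h(T)$ so that $h$ maps $T$ into $\R^2 \setminus \{(0,0)\}$. Choosing any $x \in T$ and a path $\phi$ in the connected tree $T$ from $x$ to $\iota(x)$, the concatenation $\phi \cup \iota(\phi)$ is a loop in $T$; its image under $h$ is $\gamma \cup (-\gamma)$, where $\gamma := h(\phi)$ runs from $v := h(x)$ to $-v$. A short computation with a continuous branch of the argument (the argument of $\gamma$ changes by $\pi + 2\pi n$, and that of the reflected arc $-\gamma$ again by $\pi + 2\pi n$) shows this loop has odd winding number about the origin, hence is non-contractible in $\R^2 \setminus \{(0,0)\}$. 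But $T$ is a tree, hence contractible, so every loop in $T$ --- and therefore its image under $h$ --- is null-homotopic, which is the desired contradiction.

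The only genuinely delicate point feeding both arguments is that set-theoretic $\sigma$-invariance of the image lifts to an honest involution $\iota$ of the domain tree, and that this $\iota$ has a fixed point at a finite location rather than merely fixing an end at infinity. The former is the tropical counterpart of the normalization step in Lemma \ref{1300} and follows from the uniqueness of the tree structure together with the balancing condition; the latter is automatic, since an involution, being of finite order, cannot act as a translation along a bi-infinite geodesic and so must be elliptic. Both are routine, which is precisely why the proof is elementary.
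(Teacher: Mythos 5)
Your proposal is correct, and it actually contains two valid proofs, one of which goes beyond what the paper does. The paper never writes out a proof of Lemma \ref{1036}: it simply declares it to be the ``elementary'' tropical version of Lemma \ref{1300}, whose proof is the loop-plus-winding-number contradiction. Your second argument is precisely that intended proof, with the simple connectivity of the normalization $\tilde{C}\cong\mathbb{P}^1$ replaced by the contractibility of the tree $T$; the winding-number computation (each of $\gamma$ and $-\gamma$ contributes $\pi+2\pi n$ to the argument, giving odd total winding) is the detail the paper leaves implicit in Lemma \ref{1300}. Your first argument is genuinely different and, in the tropical setting, cleaner: once one has the domain involution $\iota$ with $h\circ\iota=-h$, uniqueness of geodesics in a tree gives a fixed point directly (the midpoint $m$ of the geodesic $[x,\iota(x)]$ satisfies $\iota(m)=m$, since $\iota$ carries this geodesic to itself reversed), whence $h(m)=-h(m)=0$; this avoids any contradiction argument and even locates a point mapping to the origin. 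You are also right to single out the existence of the lift $\iota$ as the only delicate input common to both routes: this is exactly the tropical counterpart of the first half of the paper's proof of Lemma \ref{1300}, where the set-theoretic $\tilde{\sigma}$-invariance of $C$ is lifted to an involution of the normalization via the universal property. In the tropical setting this lift is supplied for free when the real curve arises from the doubling construction (reflection through the end), and in general follows, as you say, from essential uniqueness of the parametrizing tree of a reduced tropical curve; spelling that out would be the only thing needed to make either of your arguments fully rigorous.
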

  
\subsection{Tropical-holomorphic correspondence theorem}

We now establish the correspondence between holomorphic bulk-deformed potentials and tropical ones as well as their associated wall structures, $\mathcal{W}^{holo}$ (\ref{def:holo_Wk}) and $\mathcal{W}^{trop}$ (Definition \ref{1033}).
First of all, the following describes how the two walls are related.

\begin{theorem}\label{thm:wall}
  Given $q_1,\cdots, q_k\in (\mathbb{C}^*)^2\subset X$ in generic position and set $p_i=\mbox{Log}(q_i) \in \mathbb{R}^2$. Then there exists a neighborhood $\mathcal{U}_t$ of $\mathcal{W}^{trop}$ such that 
    \begin{enumerate}
       \item $\mathcal{U}_t$ deformation retracts to tropical wall structure $\mathcal{W}^{trop}$ associated to the $k$ point constraints $p_1,\cdots, p_k$. 
       \item The wall structure associated to $H_t^{-1}(q_1)\cdots, H_t^{-1}(q_k)$ contains in $\mathcal{U}_t$. 
       \item $\cap_{t\rightarrow \infty}\mathcal{U}_t$ is the tropical wall structure. 
    \end{enumerate}
\end{theorem}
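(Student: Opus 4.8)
The plan is to reduce this disc-theoretic statement to Mikhalkin's amoeba-to-spine convergence for rational curves \cite{M2} by means of the doubling construction associated to the anti-symplectic involution $\tilde{\sigma}$ of \eqref{1005}. First I would fix a point $u$ of the rescaled holomorphic wall structure of $W_k^t$; by Definition \ref{def:holo_Wk} the fiber $L_{H_t^{-1}(u)}$ then bounds a generalized Maslov zero $J_t$-holomorphic disc $D_t$ passing through a subset of the constraints $H_t^{-1}(q_i)$. After translating $(\C^*)^2$ so that $L_{H_t^{-1}(u)}$ becomes the fixed locus $S^1(1)\times S^1(1)$ of $\tilde{\sigma}$, I would double $D_t$ across its boundary, obtaining by Lemma \ref{1300} a real $J_t$-holomorphic rational curve $C_t$ which passes through the translated constraints together with their $\tilde{\sigma}$-images. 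Since $\mathrm{Log}\circ H_t=\tfrac{1}{\log t}\,\mathrm{Log}$, the (rescaled) constraints sit at the fixed tropical positions $p_i-u$, and their reflections $u-p_i$ appear because $\mathrm{Log}\circ\tilde{\sigma}=-\mathrm{Log}$, independently of $t$.

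Next I would invoke Gromov compactness together with the Fano condition and the nilpotency $t_i^2=0$: these bound the relative classes, and hence by Proposition \ref{1035} and Lemma \ref{798} the combinatorial complexity of the curves $C_t$, so that along $t\to\infty$ they converge (up to subsequence) in the Gromov--Hausdorff sense to a rational tropical curve whose $\mathrm{Log}$-image is, by Mikhalkin's correspondence \cite{M2}, a tropical curve through the prescribed points. Using $\mathrm{Log}\circ\tilde{\sigma}=-\mathrm{Log}$ together with the tropical symmetry of Lemma \ref{1036}, this limiting tropical curve is exactly the $\Z_2$-reflection doubling of a tropical disc about its end; halving the reflected structure shows that this disc has generalized Maslov index two, so that the extended edge adjacent to its root is precisely a tropical wall in the sense of Definition \ref{1033}. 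This establishes that every limit point of the rescaled holomorphic wall lies on $\mathcal{W}^{trop}$.

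For the reverse inclusion I would start from a tropical wall, that is, from a rigid generalized Maslov two tropical disc, double it into a tropical rational curve, and apply the converse half of Mikhalkin's correspondence to realize it as the $\mathrm{Log}$-limit of genuine $J_t$-holomorphic rational curves; genericity of the $p_i$ forces these curves to be $\tilde{\sigma}$-invariant, hence doublings of holomorphic discs bounding the corresponding fibers, so that each tropical wall is approximated arbitrarily well by the rescaled holomorphic walls. Combining the two inclusions yields convergence of the wall loci in $u$ over the finite (by $t_i^2=0$) collection of wall components. I would then \emph{define} $\mathcal{U}_t$ to be the open $\delta(t)$-neighborhood of $\mathcal{W}^{trop}$ for a radius $\delta(t)\to 0$ chosen smaller than the minimal separation of non-adjacent wall pieces: item (2) is the content of the convergence just described, item (1) holds because a sufficiently thin tubular neighborhood of the finite one-complex $\mathcal{W}^{trop}$ (including near its trivalent vertices) deformation retracts onto it, and item (3) is immediate from $\delta(t)\to 0$.

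The hard part will be the \emph{uniformity} of this convergence and the exclusion of spurious wall components. A priori the nonlinear holomorphic walls exhibited for $\mathbb{P}^2$ in Section \ref{sec:9999} could fail to converge, or could contribute limit segments not accounted for by any tropical disc; ruling this out requires a careful Gromov-compactness analysis showing that no boundary bubbling or curve degeneration survives the generic point constraints, for which Lemmas \ref{799}, \ref{lem:no_bubbles} and \ref{798} are the key inputs, and that the limiting amoeba spines are traced out uniformly in $u$ so that a single shrinking radius $\delta(t)$ captures the entire wall structure at once.
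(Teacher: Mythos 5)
Your first inclusion --- that the rescaled holomorphic walls fall into a shrinking neighborhood of $\mathcal{W}^{trop}$ --- is essentially the paper's own argument (doubling by $\tilde{\sigma}$ plus Mikhalkin's estimate, Lemma \ref{9998}), and is carried out there as Proposition \ref{1081}. The genuine gap is in your reverse inclusion. You propose to realize each tropical wall holomorphically by doubling the generalized Maslov index two tropical disc and invoking the converse half of Mikhalkin's correspondence, asserting that ``genericity of the $p_i$ forces these curves to be $\tilde{\sigma}$-invariant.'' This does not follow. Mikhalkin's correspondence in \cite{M2} produces $J_t$-holomorphic curves through a \emph{generic} point configuration, whereas your doubled configuration $\{q_i',\tilde{\sigma}(q_i')\}$ is $\tilde{\sigma}$-symmetric, hence precisely non-generic; and $\tilde{\sigma}$ acts only on the \emph{set} of curves interpolating that symmetric configuration and converging to the doubled tropical curve, so these curves can be exchanged in conjugate pairs with no $\tilde{\sigma}$-fixed member at all (a real possibility whenever the relevant tropical multiplicity is even). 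Producing an actual real curve --- equivalently a holomorphic disc with boundary on the torus fiber --- would require a real (Welschinger-type) correspondence theorem with a nonvanishing count, which you neither state nor prove; this is exactly the delicate point of real enumerative geometry and cannot be dismissed by genericity. Note also that the reflection center, hence the involution and the reflected constraints, vary with $u$ along the wall, so you would need such a real correspondence uniformly in a one-parameter family of configurations.

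The paper circumvents this entirely by producing the holomorphic walls Floer-theoretically rather than via the converse tropical correspondence. For the \emph{extended} walls through a constraint point, Proposition \ref{1301} applies Stokes' theorem for Fukaya's correspondences to two paths around $q_{k+1}$ and shows that the difference of wall-crossing coefficients equals the nonzero coefficient $N_{\beta}$ of $z^{\partial\beta}$ in $W_k(p_{k+1})$; this nonvanishing forces the existence of generalized Maslov index zero discs near $q_{k+1}$, which by Proposition \ref{1081} must lie close to the corresponding tropical wall. For the walls created by scattering, the paper runs a monodromy argument: if no holomorphic wall existed near the intersection point $p$ of two tropical walls, Fukaya's trick (Proposition \ref{1086}) would force the composition of the known wall-crossing maps around a small loop about $p$ to be the identity, contradicting the nontriviality of the commutator $\mathcal{K}_{\beta_1}\mathcal{K}_{\beta_2}\mathcal{K}_{\beta_1}^{-1}\mathcal{K}_{\beta_2}^{-1}$ (compare Lemma \ref{1078}); induction on the number of edges of the tropical discs then completes the proof. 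To repair your argument you must either prove the real correspondence statement you are implicitly using, or replace that step by an indirect existence argument of this Floer-theoretic kind.
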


We begin by showing that the walls of holomorphic discs converge to walls of tropical discs.
The following estimate due to Mikhalkin \cite[Lemma 8.5]{M2} will be used crucially in the proof. 
\begin{lemma}\label{9998}
For a rational curve $C$ in $(\mathbb{C}^\ast)^2$,  $\mbox{Log}_t(C)$ lies inside a $\frac{d}{\log{t}}$-neighborhood of $C^{trop}$, where $d$ is a constant only depending on the Newton polytope $\Delta$ of $C^{trop}$.
\end{lemma}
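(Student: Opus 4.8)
Since this is Mikhalkin's estimate \cite[Lemma 8.5]{M2}, the plan is to recover it by the standard amoeba argument. First I would write the curve as $C=V(f)$ for a Laurent polynomial $f=\sum_{(i,j)\in\Delta\cap\Z^2}a_{ij}x^iy^j$ with Newton polytope $\Delta$ (only the algebraicity of $C$ is used, not its rationality; a reducible $C$ is handled component by component). Recalling that $\mbox{Log}_t=\frac{1}{\log t}\mbox{Log}$, a point $v=(v_1,v_2)$ lies in $\mbox{Log}_t(C)$ precisely when $f$ has a zero on the torus $\{|x|=t^{v_1},\,|y|=t^{v_2}\}$, while $C^{trop}$ is the corner locus of the tropical polynomial $v\mapsto\max_{(i,j)\in\Delta\cap\Z^2}\phi_{ij}(v)$ with $\phi_{ij}(v):=iv_1+jv_2$, i.e. the Hausdorff limit $\lim_{t\to\infty}\mbox{Log}_t(C)$.

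The core of the proof is the elementary fact that on a curve no single monomial dominates. At a witness $(x,y)$ with $\mbox{Log}_t(x,y)=v$, the $(i,j)$-term of $f$ has modulus $|a_{ij}|t^{\phi_{ij}(v)}$, and the relation $\sum a_{ij}x^iy^j=0$ forces the largest term to be at most the sum of the others, hence at most $(|\Delta\cap\Z^2|-1)$ times the second largest. Taking logarithms and dividing by $\log t$, I would deduce that there are two distinct lattice points $(i',j')\ne(i'',j'')$ with
\[
\phi_{i'j'}(v),\ \phi_{i''j''}(v)\ \ge\ \max_{(i,j)}\phi_{ij}(v)-\frac{c_1}{\log t},
\]
where $c_1$ collects $\log(|\Delta\cap\Z^2|-1)$ together with the fixed, finite spread $\max_{ij,kl}\big|\log|a_{ij}|-\log|a_{kl}|\big|$ of the coefficients. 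In other words, the top two values of $\{\phi_{ij}(v)\}$ tie up to $c_1/\log t$.

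It then remains to convert this near-tie into a distance estimate. Away from the vertices of $C^{trop}$ the two competing functions sit in adjacent cells of the tropical subdivision, and their difference $\phi_{i'j'}-\phi_{i''j''}$ has gradient the nonzero integer vector $(i'-i'',\,j'-j'')$, of norm $\ge 1$; hence a value gap $\le\epsilon:=c_1/\log t$ forces Euclidean distance $\le\epsilon$ to the separating wall. Near a vertex the point is simultaneously $O(\epsilon)$-close to each of the finitely many walls meeting there, which are pairwise transverse, so it is $O(\epsilon)$-close to the vertex itself. Combining these, $\mbox{Log}_t(C)\subseteq N_{d/\log t}(C^{trop})$, which is the claim.

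The main obstacle is precisely this last geometric step in the vertex regions, where several linear pieces become comparable at once and the naive pairwise bound must be promoted to a uniform one. I would handle it, as Mikhalkin does, through the finite combinatorics of the gradient differences --- all nonzero lattice vectors of norm $\ge 1$ --- which simultaneously yields the $1/\log t$ rate and lets the constant $d$ be controlled purely by the Newton polytope $\Delta$.
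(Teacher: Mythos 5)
The paper offers no proof of this lemma at all: it is quoted directly from Mikhalkin \cite[Lemma 8.5]{M2}. So the only meaningful comparison is with the standard amoeba--spine argument you are reconstructing, and your skeleton is indeed the right one: on a curve no single monomial can dominate, hence the two largest tropical terms nearly tie, and a near-tie forces proximity to the corner locus at rate $1/\log t$ because gradient differences are nonzero integer vectors.

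There is, however, a genuine gap in where you put the coefficients. You take $C^{trop}$ to be the corner locus of $v \mapsto \max_{(i,j)}(iv_1+jv_2)$ and push the spread $\max_{ij,kl}\bigl|\log|a_{ij}|-\log|a_{kl}|\bigr|$ into your constant $c_1$. This has two consequences that break the statement. First, your $d$ then depends on the coefficients of $C$, whereas the lemma asserts --- and the paper essentially uses --- that $d$ depends only on $\Delta$: in the application (Proposition \ref{1081}) the rational curves are doublings of discs passing through the rescaled constraints $H_t^{-1}(q_i)$, so they vary with $t$ and their coefficient spread grows like $\log t$; your bound $c_1/\log t$ is then of order $1$, i.e.\ vacuous exactly in the regime where the estimate is needed. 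Second, the corner locus of $\max(iv_1+jv_2)$ is a fan through the origin (the logarithmic limit set of a fixed $C$), not the tropicalization the paper works with, which is the corner locus of $\max\bigl(iv_1+jv_2+\log|a_{ij}|\bigr)$ read in the appropriately rescaled coordinates; the position of $C^{trop}$ in $\R^2$ (e.g.\ that it passes near the constraint points $p_i$) is precisely what the correspondence argument exploits, and it is lost in your formulation. The repair is to keep the coefficients inside the maximum: at a point $v$ of $\mathrm{Log}_t(C)$ the two largest values of $\bigl\{iv_1+jv_2+\tfrac{\log|a_{ij}|}{\log t}\bigr\}$ tie up to $\tfrac{\log(N-1)}{\log t}$, where $N$ is the number of lattice points of $\Delta$ --- a bound independent of the $a_{ij}$ --- and then your conversion step applies verbatim to this tropical polynomial, yielding $d=\log(N-1)$, which depends only on $\Delta$. (Incidentally, the vertex case of that conversion is cleaner if, instead of invoking pairwise transversality of walls, you walk from $v$ in the direction $-\nabla(\ell_{\mathrm{top}}-\ell_{\mathrm{second}})$ and stop at the first point where the maximum is attained twice; since this gradient is a nonzero integer vector, such a point is reached within distance equal to the tie-gap.)
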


\begin{prop} \label{1081}
 Given $q_1,\cdots, q_k\in (\mathbb{C}^*)^2$ in generic positions, $p_i=\mbox{Log}{(q_i)}\in \mathbb{R}^2$ and $\epsilon>0$, then there exists $T>0$ such that  $\mathcal{W}_t^{holo}$ is contained inside an $\epsilon$-tubular neighborhood $N_{\epsilon}\big(\mathcal{W}^{trop}\big)$ of the tropical wall structure. 
\end{prop}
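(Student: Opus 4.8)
The plan is to realize every point of $\mathcal{W}^{holo}_t$ as the rescaled-$\mathrm{Log}$ image of the boundary of a doubled real rational curve, and then to control that curve with the Mikhalkin estimate of Lemma~\ref{9998}. Concretely, suppose $u\in\mathcal{W}^{holo}_t$, so that $L_u$ bounds a generalized Maslov index $0$ disc $f\colon(D^2,\partial D^2)\to(X,L_u)$ passing through a subcollection of the rescaled constraints $H_t^{-1}(q_i)$. After translating $L_u$ to $S^1(1)\times S^1(1)$, I would double the open part of $f$ in $(\C^*)^2$ via the anti-symplectic involution $\tilde\sigma$ of \eqref{1005} and extend it to an irreducible real rational curve $C\subseteq(\C^*)^2$. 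Lemmas~\ref{799}, \ref{798} and \ref{lem:no_bubbles} ensure that for generic constraints the relevant discs carry no multiple covers, no negative Maslov summands, and no sphere bubbles, so the doubling produces a genuine irreducible rational curve of controlled topology.

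First I would establish the uniform bound needed to apply Lemma~\ref{9998}. Because $X$ is Fano and there are at most $k$ interior constraints, the Maslov index, hence the degree, of any disc contributing to $\mathcal{W}^{holo}_t$ is bounded independently of $t$; consequently the doubled curves $C$ realize only finitely many Newton polytopes $\Delta$. Thus the constant $d=d(\Delta)$ in Lemma~\ref{9998} may be chosen uniformly over all such $C$, giving that $\mathrm{Log}_t(C)$ lies in a $\tfrac{d}{\log t}$-neighborhood of the tropicalization $C^{\mathrm{trop}}$. Since the wall point $u$ is the $\mathrm{Log}_t$-image of $\partial f\subseteq C$, we obtain that $u$ lies within $\tfrac{d}{\log t}$ of $C^{\mathrm{trop}}$.

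It remains to identify $C^{\mathrm{trop}}$ with a doubling of a wall-generating tropical disc. Here I would use that $\mathrm{Log}_t(H_t^{-1}(q_i))=p_i$ exactly, so $p_i\in\mathrm{Log}_t(C)$ and hence $C^{\mathrm{trop}}$ passes within $\tfrac{d}{\log t}$ of each relevant $p_i$; combined with the identity $\mathrm{Log}\circ\tilde\sigma=-\mathrm{Log}$ and Lemma~\ref{1036}, the curve $C^{\mathrm{trop}}$ is symmetric about the image of $u$ and is the reflection-doubling of a generalized Maslov zero tropical disc ending at that point. By the finiteness of generalized Maslov two tropical discs for a fixed configuration (Proposition~\ref{1035}), only finitely many combinatorial types are available, and a rigidity/limiting argument as $t\to\infty$ forces the approximate incidence data to match that of a genuine tropical disc passing exactly through the $p_i$, whose extension is a wall $\mathfrak{d}\in\mathcal{W}^{\mathrm{trop}}$ in the sense of Definition~\ref{1033} containing the limiting position of $u$.

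Combining these steps, $u$ lies within $\tfrac{d}{\log t}$ of $\mathcal{W}^{\mathrm{trop}}$ for large $t$, so choosing $T$ with $\tfrac{d}{\log T}<\epsilon$ gives $\mathcal{W}^{holo}_t\subseteq N_\epsilon(\mathcal{W}^{\mathrm{trop}})$ for all $t>T$. I expect the main obstacle to be exactly this last identification: after tropicalization the doubled curve meets the constraints only approximately (within $\tfrac{d}{\log t}$ rather than exactly at $p_i$), so one must rule out $C^{\mathrm{trop}}$ limiting to a degenerate or non-wall-generating configuration. Handling this cleanly requires pairing the uniform degree bound with the discreteness of admissible tropical types and using the genericity of $p_1,\dots,p_k$ to exclude extraneous limiting walls.
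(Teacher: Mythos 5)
Your overall route is the same as the paper's: realize a point $u\in\mathcal{W}^{holo}_t$ as the $\mathrm{Log}_t$-image of the boundary of a generalized Maslov index zero disc, double the disc through the involution $\tilde{\sigma}$ of \eqref{1005} to an irreducible real rational curve $C$, bound its degree (Maslov index at most $2k$, hence finitely many Newton polytopes), and invoke Lemma \ref{9998} to place $\mathrm{Log}_t(C)$ within $\tfrac{d}{\log t}$ of $C^{\mathrm{trop}}$. However, the gap you flag at the end is genuine, and it is exactly where the paper's proof does its real work. You propose to close it with ``a rigidity/limiting argument as $t\to\infty$,'' but no limit is taken in the paper, and a limiting argument would in any case only describe the asymptotic position of the walls; it would not deliver the quantitative statement actually claimed, namely containment in $N_\epsilon\big(\mathcal{W}^{trop}\big)$ for every fixed $t>T$.

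What the paper uses instead is a perturbation-stability statement on the tropical side, set up before any holomorphic curve appears: by Proposition \ref{1035} there are only finitely many generalized Maslov index zero tropical discs $h_1,\dots,h_l$ through the generic configuration $p_1,\dots,p_k$ (up to elongation of ends), and genericity supplies an $r>0$ such that for every choice of perturbed points $p'_i\in B(p_i,r)$ the tropical discs through $p'_1,\dots,p'_k$ are again exactly $l$ in number, of the same types as the $h_i$ and with the same multiplicities. Now take $t$ large enough that $\tfrac{d}{\log t}<r$. Since $H_t^{-1}(q_i)\in C$, the tropicalization $C^{\mathrm{trop}}$ passes through points $p'_i\in B\big(p_i,\tfrac{d}{\log t}\big)\subset B(p_i,r)$, so by stability it must be of the same type as one of the $h_i$; and a tropical curve of a fixed type through points within $\tfrac{d}{\log t}$ of the $p_i$ lies within $\tfrac{d'}{\log t}$ of $h_i$, with $d'$ depending only on the Newton polytope. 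This gives $u\in N_{d'/\log t}\big(\mathcal{W}^{trop}\big)$ for each fixed large $t$, and choosing $T$ with $\tfrac{d'}{\log T}<\epsilon$ finishes the proof. In short, the discreteness of tropical types alone (which you do invoke) is not sufficient; you need the locally constant behavior of the tropical solution set under small perturbation of the point constraints, which is what converts approximate incidence into exact type identification without any limiting process.
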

\begin{proof}
 From Remark \ref{1035}, there are only finitely many tropical discs of generalized Maslov index zero, up to elongation of the ends. We denote them by $h_1,\cdots, h_l$. Let $\tilde{h}_1,\cdots, \tilde{h}_l$ be the doubling tropical curves. Moreover, all the above tropical discs $h_1,\cdots, h_k$ are trivalent except the vertices mapped to $p_i$. Also, there exists a constant $r>0$, depending on the position of $p_i$, such that for any $p'_i\in B(p_i,r)$, there exist exactly $l$ tropical discs $h'_1,\cdots, h'_l$ of generalized Maslov index two passing through $p'_1,\cdots, p'_k$ and $h_i$ are of the same type with $h'_i$ and $\mbox{Mult}(h_i)=\mbox{Mult}(h_i')$.

Now suppose that there exists a holomorphic disc $h:(D^2,\partial D^2)\rightarrow (X,L_u)$ passing through $H^{-1}_t(q_i)$ of generalized Maslov zero and with boundary on $L_u$. The Maslov index of the disc is at most $2k$. We give a more concrete description of the doubling $C$ of $h$. $h$ can be written in terms of toric homogeneous coordinates as
\begin{equation}\label{eqn:hblaschke}
\tilde{h} :  (D^2,\partial D^2) \to (\C^N \setminus Z(\Sigma), \tilde{L_u}) \qquad z_j (\tilde{h}) = c_j \cdot \prod_{k=1}^{\mu_j} \frac{z- \alpha_{j,k}}{1- \bar{\alpha_{j,k}} z}
\end{equation}
for $c_j \in \C^\ast$ and non-negative integers $\mu_j$ for each $j=1, \cdots N$. 
%
where $Z(\Sigma)$ is the fixed loci of the action of the algebraic torus $D(\Sigma)$ on $\C^N$ such that $X$ is the quotient of the complement. We refer readers to \cite[Section 2]{CO} for more details. Then the parametrization of $C$ is locally given by getting rid of denominators using the torus action if necessary.

If two discs have the same intersection pattern with the boundary divisors, they share the same types of factors in \eqref{eqn:hblaschke}, and hence, one can choose $1$-parameter families of the coefficients $c_j$ and $\alpha_{j,k}$ in \eqref{eqn:hblaschke}  connecting the corresponding expressions of the two discs. Therefore their doublings are also isotopic to each other. Since there are only finitely many possible intersection patterns of a holomorphic disc once we fix Maslov index $2k$, finitely many homology classes 
can appear when taking the doubling such discs. In particular, the degree of the doubling rational curve is bounded.


%

In conclusion, there are only finitely many possible intersection pairings of the curve $C$ with each of the toric boundary divisor. If real rational curve $C$ is defined by $\sum_{i,j}a_{ij}x^iy^j=0$ in $(\mathbb{C}^*)^2$ and $\Delta$ is the associate Newton polytope, there exists only finitely many such $\Delta$ up to translations. 

The tropicalization $C^{trop}\subseteq \mathbb{R}^2$ is defined as the corner locus of \begin{align*} \mbox{max}_{i,j\in \Delta} ix+jy+\log{|a_{ij}|}. \end{align*}
Since the tropicalization $C^{trop}$ of $C$ passes through some $p'_i\in B(p_i,r)$ when $\frac{d}{\log{t}} <r$, $C^{trop}$ must be of the same type of one of $h_i$. Therefore, the curve $C^{trop}$ falls in a $\frac{d'}{\log{t}}$-neighborhood of $\tilde{h}_i$ by Lemma \ref{9998}, and hence $\mbox{Log}_t(h(D^2))$ is contained in a $\frac{d'}{\log{t}}$-neighborhood of $h_i$. We remark that $d'$ is some constant depending only on $\Delta$.
In particular,  one has that $\frac{u}{\log{t}}$ lands in a $\frac{d'}{\log{t}}$ neighborhood of a wall of tropical discs, and one can take $\epsilon=\frac{d'}{\log{t}}$.

\end{proof}

We next discuss the computation of the wall-crossing of the extended wall. 
\begin{prop}\label{1301}
    There is a one-to-one correspondence 
      \begin{align*}
           \{\text{terms in $W_{k}(p_{k+1})$}\} \longleftrightarrow \{\text{the walls of $W_{k+1}$ passing through $p_{k+1}$} \}.
      \end{align*}
      Moreover, if $W_k(p_{k+1})=\sum_{\beta}N_{\beta}z^{\partial \beta}$. The wall passing through $p_{k+1}$ responsible for $N_{\beta}z^{\partial \beta}$ induces the wall-crossing  transformation
         \begin{align} \label{eqn:cluster_trans}
             z^{\partial \alpha}\mapsto z^{\partial \alpha}(1+N_{\beta}z^{\partial\beta})^{\langle \alpha,\beta\rangle}. 
         \end{align} 
 \end{prop}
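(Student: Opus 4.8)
The plan is to identify, for each new constraint $q_{k+1}$, the extended walls of $W_{k+1}$ emanating from $p_{k+1}$ with the degenerations, at $u=p_{k+1}$, of generalized Maslov zero discs passing through $q_{k+1}$, and to match those degenerations with the generalized Maslov two discs counted by $W_k(p_{k+1})$. Fix a wall $\mathfrak{d}$ of $W_{k+1}$ through $p_{k+1}$ and a point $u\in\mathfrak{d}$ close to $p_{k+1}$. By genericity of $p_1,\dots,p_k$ (Lemma \ref{798}), none of the walls built from the first $k$ constraints passes through $p_{k+1}$, so the generalized Maslov zero disc responsible for $\mathfrak{d}$ must carry an interior marked point mapping to $q_{k+1}$. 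Since $q_{k+1}$ lies on $L_{p_{k+1}}$, letting $u\to p_{k+1}$ along $\mathfrak{d}$ pushes this marked point onto the boundary of the domain, forcing a boundary bubble. Index bookkeeping, together with the exclusion of multiple covers and sphere bubbles (Lemmas \ref{799}, \ref{798}, \ref{lem:no_bubbles}) and the fact that $q_{k+1}$ sits on the fixed locus of the involution $\tilde{\sigma}$, forces this bubble to be a single constant ghost component attached at $q_{k+1}$; forgetting it raises the generalized Maslov index by two, so the main component $\psi_0$ is a generalized Maslov two disc ending on $L_{p_{k+1}}$, i.e. a contribution $N_\beta z^{\partial\beta}$ to $W_k(p_{k+1})$.

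To obtain the bijection I would reverse this construction: starting from a term $N_\beta z^{\partial\beta}$ of $W_k(p_{k+1})$, realized by a generalized Maslov two disc ending on $L_{p_{k+1}}$, I attach a constant disc at $q_{k+1}\in L_{p_{k+1}}$ and deform $u$ away from $p_{k+1}$. The resulting family of generalized Maslov zero discs through $q_{k+1}$ sweeps out a one-dimensional locus (Lemma \ref{798}) emanating from $p_{k+1}$, which is the extended wall $\mathfrak{d}$. Genericity makes distinct monomials emanate in distinct directions, giving injectivity, while the degeneration analysis above gives surjectivity. Throughout, the relative class $\beta$ and its boundary $\partial\beta$ are unchanged, and since the constant component has no moduli and there are no nontrivial automorphisms (Lemma \ref{799}), the weight $N_\beta$ of the disc equals the coefficient of the monomial; this is exactly the slab datum $f_{\mathfrak{d}}=1+N_\beta z^{\partial\beta}$ of Definition \ref{1033}.

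It remains to compute the wall-crossing transformation across $\mathfrak{d}$, and here I would argue exactly as in the first-order computation of Proposition \ref{prop:wcmap1p2}. Crossing $\mathfrak{d}$ is realized by the $A_\infty$-homomorphism $\hat{f}^{\fb,can}$ of a pseudo-isotopy whose only nonempty contributing slice is the family of generalized Maslov zero discs in class $\beta$ through $q_{k+1}$ (Fukaya's trick, Proposition \ref{1086}). Evaluating $(F^{\fb,can})_*$ on the Maurer-Cartan element $b=x_1e_1+x_2e_2$ and applying the divisor axiom \eqref{eqn:divisor2} resums the boundary insertions of $b$ into the monomial $z^{\partial\beta}=e^{\langle\partial\beta,b\rangle}$. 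Passing to the coordinates $z_i=e^{x_i}$, the induced map on holonomy variables is read off as the cluster transformation $z^{\partial\alpha}\mapsto z^{\partial\alpha}(1+N_\beta z^{\partial\beta})^{\langle\alpha,\beta\rangle}$, with exponent the skew pairing $\langle\alpha,\beta\rangle$ of boundary classes; this is the map $\mathcal{K}_{\mathfrak{d}}$ of Definition \ref{1033}, and the relations $t_i^2=0$ truncate the binomial to the finite order required, exactly as in Proposition \ref{prop:wcforw2}.

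The main obstacle is the degeneration analysis of the first paragraph: one must show that as $u\to p_{k+1}$ along $\mathfrak{d}$ the disc through $q_{k+1}$ breaks into a single main disc together with exactly one constant ghost bubble at $q_{k+1}$, with no additional bubbling and no loss of multiplicity. This is where the no-bubble and multiple-cover exclusions (Lemmas \ref{799}, \ref{798}, \ref{lem:no_bubbles}) are essential, and where the doubling picture of Section \ref{sec:HTcorres} is clarifying: doubling $\psi_0$ by the anti-symplectic involution $\tilde{\sigma}$ turns it into a real rational curve, and at $u=p_{k+1}$ the constraint $q_{k+1}$ merges with its conjugate $\tilde{\sigma}(q_{k+1})$ on the real locus, making both the structure of the degeneration and the preservation of the count transparent.
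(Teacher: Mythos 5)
Your geometric picture is the right one: the broken configurations you describe (a generalized Maslov index two disc with boundary on $L_{p_{k+1}}$ carrying a constant ghost bubble at $q_{k+1}$) are exactly the configurations the paper works with, and your derivation of the transformation \eqref{eqn:cluster_trans} from the pseudo-isotopy plus the divisor axiom (Lemma \ref{lem:divisor}) coincides with the paper's first step. The genuine gap is in your second paragraph, and you concede it yourself in the last one: the assertion that one can ``attach a constant disc at $q_{k+1}$ and deform $u$ away from $p_{k+1}$'' so that the resulting family sweeps out a wall whose crossing transformation has coefficient exactly $N_\beta$ is a gluing statement for a nodal, non-transverse configuration (the ghost component is constant and carries the marked point), and nothing in Lemmas \ref{799}, \ref{798}, \ref{lem:no_bubbles} supplies it; those lemmas only exclude unwanted degenerations. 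The appeal to the involution $\tilde{\sigma}$ is heuristic and has no deformation-theoretic content, so in your write-up both the existence of the extended wall and the identification of its coefficient with $N_\beta$ remain unproved.

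The paper circumvents the gluing entirely, and this is worth internalizing. It takes two paths $\phi,\phi'$ with common endpoints near $q_{k+1}$ whose composite loop encloses $p_{k+1}$, writes both wall-crossing transformations with unknown coefficients $n_\beta$, $n_\beta'$ (divisor axiom, as in your third paragraph), and applies Stokes' theorem for correspondences to the two-parameter moduli space $\mathcal{M}_{k,1,s,t}$ of a pseudo-isotopy of pseudo-isotopies interpolating between the two paths. The codimension-one boundary of this parametrized moduli space consists precisely of your ghost-bubble configurations --- this identification needs only Gromov compactness and the exclusion lemmas, i.e.\ the soft half of your degeneration analysis --- and the composition formula then evaluates that boundary term as the coefficient $N_\beta$ of $z^{\partial\beta}$ in $W_k(p_{k+1})$, giving $n_\beta - n_\beta' = N_\beta$. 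Existence of the wall then follows simply because this difference is nonzero; no gluing is invoked. Finally, to convert the difference into the asserted formula one must know $n_\beta'=0$, i.e.\ that the second path crosses no wall of class $\beta$; the paper gets this from the localization result, Proposition \ref{1081}, which confines the holomorphic walls to an $\epsilon$-neighborhood of the tropical ones for $t\gg 1$. Your proposal silently assumes the corresponding fact when you declare that the ``only nonempty contributing slice'' is the glued family, so this input is missing from your argument as well.
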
     
 \begin{proof} 
 Let $t\gg 1$, and take $u_+$ and $u_-$ in a  complement of $N_{\epsilon}(W^{trop})$ intersected with a small neighborhood of $q_{k+1}$. Assume that both $W_k(u_+)$ and  $W_k(u_-)$ are well-defined. 
Choose a path $\phi(t), t\in [0,1]$ from $u_-$ to $u_+$ crossing the wall as in Figure \ref{fig:path} and a family of diffeomorphisms $\phi_t$ such that $\phi_1(L_{u_-})=L_{u_+}$. Let  
    $$\mathcal{M}_{k,1,\beta}^{\phi}=\bigcup_{t\in [0,1]}\mathcal{M}_{k,1}(L_{u_-},\beta, J_t)$$
 be the one-parameter family of holomorphic discs with respect to the almost complex structure $J_t=(\phi_t^{-1})_*J$.
    
  \begin{figure} 
  \includegraphics[scale=0.5]{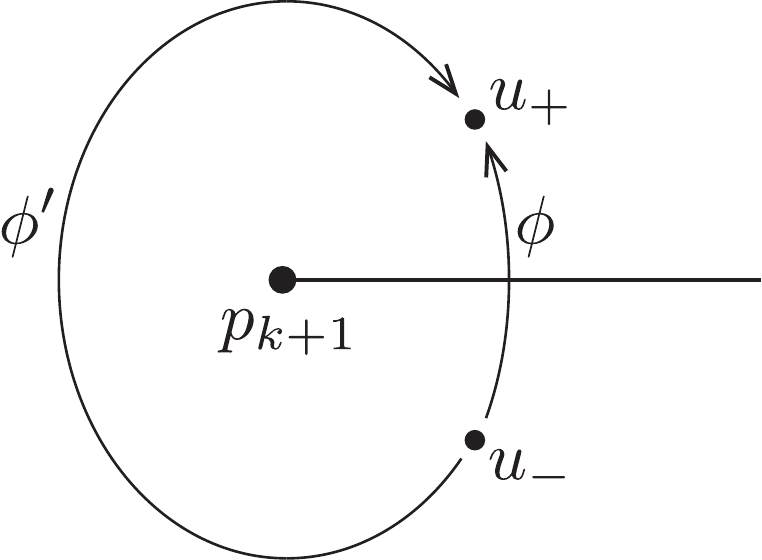}
  \caption{The path $\phi$}\label{fig:path}
  \end{figure}

     Then from \eqref{eqn:pseudo-iso} and Lemma \ref{lem:divisor}, we have $W(u_+)=TW(u_-)$, where $T$ is the transformation 
       \begin{align*}
           z^{\partial \alpha}\mapsto z^{\partial\alpha} (1+n_{\beta}z^{\partial \beta}t_{k+1})^{\langle \alpha,\beta\rangle}.
       \end{align*}
Here,  $n_{\beta}$ is given by      \begin{align*}
         n_{\beta}=Corr_*(\mathcal{M}_{k,1,\beta}^{\phi},pt;tri,tri)(1),
      \end{align*} 
      from \eqref{eqn:harmonic1}. We refer the reader to \cite[Section 4]{F1} for the definition of the correspondences.

    Let $\phi'$ by another path with the same end points such that $\phi^{-1}\circ \phi'$ gives a loop containing $q_{k+1}$. We denote the correspondence for $\phi'$ by $n_{\beta}'$. One chooses an isotopy of paths $\phi^s$ between $\phi$ and $\phi'$ and a two parameter family of diffeomorphism $\phi_{s,t}$. We denote by $$\mathcal{M}_{k,1,s,t}=\bigcup_{s,t}\mathcal{M}_{k,1}(L_{u_-},\beta, J_{s,t})$$ 
the union of moduli spaces of holomorphic discs with respective to the family of almost complex structure $J_{s,t}=(\phi_{s,t})^{-1}_*J$. There exist a Kuranishi structure on $\mathcal{M}$ compatible with that on $\partial \mathcal{M}_{k,1,s,t}$. By Stokes' theorem of the correspondences proved in \cite[Proposition 4.2]{F1}, we have
      \begin{align} \label{1079}
         n_{\beta}-n_{\beta}'= Corr_*(\partial_1 \mathcal{M}_{k,1,s,t},pt;tri,tri)(1),
      \end{align} where $\partial_1\mathcal{M}$ denotes the real codimension one boundary of the moduli space $\mathcal{M}.$

     One notices that $\partial_1 \mathcal{M}_{k,1,s,t}$ occurs due to the degeneration of the holomorphic discs when the interior marked point $q_{k+1}$ moves to the boundary of the generalized Maslov index zero holomorphic disc to result in the constant disc bubble. In other words, $\partial_1 \mathcal{M}_{k,1,s,t}$ consists of the holomorphic discs with a component of generalized Maslov index $2$ disc passing through a subset of $\{p_1,\cdots, p_k\}$ and a constant disc bubble on the boundary with an interior marking mapped to $q_{k+1}$. The compatibility of Kuranishi structures implies that 
         \begin{align} \label{1080}
            n_{\beta}-n_{\beta}'=Corr_*(\mathcal{M}'_{k,1,\beta},pt:tri,tri)(1),
        \end{align} where $\mathcal{M}'_{k,1,\beta}$ is the fiber product of $\mathcal{M}_{k,1,\beta}$ and $q_{k+1}$ with respect to the evaluation map and the inclusion of $\{q_{k+1}\}$ into $X$. Then the composition formula in \cite[Proposition 4.3]{F1} computes the right hand side of (\ref{1080}) to give 
         \begin{align}\label{eqn:aftercompF}
            n_{\beta}-n_{\beta}'= \int_L Corr_*(\mathcal{M}_{k,1,\beta}(L),L;tri, ev^{int})(1).
         \end{align} 
The right hand side \eqref{eqn:aftercompF} is the coefficient $N_{\beta}$ of $z^{\partial \beta}$ in $W_k(p_{k+1})$ and in particular, $n_{\beta}-n_{\beta}'\neq 0$. Thus there exists holomorphic disc of generalized Maslov index zero passing through $q_{k+1}$ with boundary on some moment torus fiber near $q_{k+1}$. 

We conclude that the wall of holomorphic discs of the class $\beta$ is non-empty near $q_{k+1}$. From Proposition \ref{1081}, such holomorphic discs project to the $\epsilon$-neighborhood of the image of the tropical disc $h.$ In particular, there is no such holomorphic disc with boundary on $L_{\phi'(t)}$ and thus $n_{\beta}'=0, n_{\beta}=N_{\beta}$, which finishes the proof.   
       
 \end{proof}
 
 \begin{proof}[Proof of Theorem \ref{thm:wall}]
   The wall of tropical discs consists of the extended walls and the ones added due to  scattering described in Section \ref{1084}. Having Proposition \ref{1301} in mind, it suffices to show that there are wall of holomorphic discs converge to the latter ones. 
   
   Assume that there exist two walls of tropical discs $\mathfrak{d}_1,\mathfrak{d}_2$ intersect at $p$. Let $u_+,u_-$ be two points on the complement of the tubular neighborhood of the tropical wall structure near $p$ and on the different sides of the wall of tropical discs $\mathfrak{d}$ emanating from $p$. Choose a path $\phi$ connecting $u_+,u_-$ and intersecting $\mathfrak{d}$. Choose another path $\phi'$ connecting $u_+,u_-$ such that $\phi^{-1}\circ \phi'$ is a simple loop around $p$. Then from Proposition \ref{1086}, we have $F_{\phi}=F_{\phi'}$. By induction on the number of  edges of the tropical discs, we have  
     \begin{align*}
        F_{\phi}=\mathcal{K}_{\beta_1}\mathcal{K}_{\beta_2}\mathcal{K}_{\beta_1}^{-1}\mathcal{K}_{\beta}^{-1} \neq \mbox{Id}. 
     \end{align*} 
 Then Theorem \ref{thm:wall} is proved by induction on the number of the edges of tropical discs. 
\end{proof}

Now we move onto studying relation between holomorphic and tropical bulk-deformed potentials. The following statement says that there is a tropical-holomorphic correspondence between the rescaled holomorphic bulk potential $W_k(u)$ and tropical bulk potential $W_k^{trop}(u)$.

\begin{theorem} \label{9999}
   Let $X$ be a toric Fano surface. Let $q_1,\cdots, q_k\in (\mathbb{C}^*)^2\subseteq X$ be generic points and $p_i=Log(q_i)$. Then one has that $W_k(u)=W_k^{trop}(u)$ if $u\in \mathcal{U}_t$ and $t\gg 1$. 
\end{theorem}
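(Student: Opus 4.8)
The plan is to prove the equality $W_k(u) = W_k^{trop}(u)$ by matching the two bulk-deformed potentials term-by-term, showing that the generalized Maslov two holomorphic discs passing through the rescaled constraints $H_t^{-1}(q_i)$ are in weight-preserving bijection with the rigid generalized Maslov two tropical discs ending on $u$. The backbone of the argument is the anti-symplectic involution $\tilde{\sigma}$ of \eqref{1005} together with the tropicalization estimate of Lemma \ref{9998}, which already appeared in Proposition \ref{1081} to control the walls. First I would reduce the statement to a correspondence of individual discs: by Definition \ref{defn:bulk_potential} and Definition \ref{1001}, both $W_k(u)$ and $W_k^{trop}(u)$ are sums of the form $\sum \mathrm{Mult} \cdot z^{\partial\beta} t_I$, so it suffices to produce, for each tropical disc $h$ contributing to $W_k^{trop}(u)$, exactly $\mathrm{Mult}(h)$ holomorphic discs (counted with sign/multiplicity) in the matching relative class $[h]$ through the matching subset of constraints, and conversely to show every contributing holomorphic disc tropicalizes to such an $h$.

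The key steps, in order, are as follows. First I would fix $u$ in the complement region where $W_k(u)$ is well-defined (so no generalized Maslov zero discs interfere, by Lemma \ref{798} and the wall structure of Theorem \ref{thm:wall}) and enumerate the finitely many rigid tropical discs $h$ of generalized Maslov index two ending on $u$, using Proposition \ref{1035}. Second, for each such $h$ I would construct the corresponding holomorphic disc by doubling: extend $h$ to a real tropical curve $\tilde{h}$ via the $\Z_2$-reflection about the end $u$, and appeal to the Mikhalkin-type correspondence (Proposition after Lemma \ref{9998}, together with Lemma \ref{1300}) to realize $\tilde{h}$ as the tropicalization of a real rational curve $C$ fixed by $\tilde{\sigma}$. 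By Lemma \ref{1300} such a $C$ has nontrivial real locus and thus is the doubling of a genuine holomorphic disc with boundary on $L_u$; the half of $C$ recovered this way is the holomorphic disc we want. Third, I would check that the incidence conditions match: the vertices of $h$ lying on the $p_i = \mathrm{Log}(q_i)$ correspond under $H_t$ to interior marked points of the disc mapping to $H_t^{-1}(q_i)$, so the monomial $t_I$ is the same on both sides. Fourth, I would verify the multiplicity count: the Mikhalkin weight $\mathrm{Mult}(h) = \prod_v \mathrm{Mult}_v$ equals the number of holomorphic discs in the bijection, using the standard tropical-to-holomorphic multiplicity comparison at each trivalent vertex and regularity of the relevant moduli (Lemma \ref{799} excludes multiple covers, Lemma \ref{lem:no_bubbles} excludes sphere bubbles).

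The main obstacle I anticipate is the rigorous control of the correspondence in the limit $t \to \infty$ and the precise multiplicity bookkeeping. Lemma \ref{9998} only places $\mathrm{Log}_t(C)$ inside a $\frac{d}{\log t}$-neighborhood of $C^{trop}$; turning this approximate statement into an exact weight-preserving bijection requires showing that for $t \gg 1$ the discs cluster near each tropical disc with exactly the expected multiplicity and that no spurious discs appear — essentially a transversality and gluing analysis analogous to Mikhalkin's and Nishinou's correspondence theorems, but now in the Lagrangian-boundary setting where the doubled curve is torically non-transverse (as flagged in the Remark after Lemma \ref{1036} and the Example). The subtlety is that the doubled half of the disc can carry a different Maslov index and always hits a lower-dimensional toric stratum, so I must argue that these non-transverse intersection patterns nonetheless contribute the correct local count; here I would invoke the finiteness of intersection patterns established in the proof of Proposition \ref{1081} and a deformation argument connecting any two discs sharing an intersection pattern. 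Once the disc-level bijection and the multiplicity equality are in hand, summing over all contributing $h$ with the matching $t_I$ gives $W_k(u) = W_k^{trop}(u)$, completing the proof.
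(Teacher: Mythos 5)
Your proposal takes a genuinely different route from the paper, and it has a genuine gap at its central step. The gap is your step four, the multiplicity matching. Doubling via \eqref{1005} converts a holomorphic disc with boundary on $L_u$ into a \emph{real} rational curve fixed by $\tilde{\sigma}$, so the number of discs tropicalizing to a given tropical disc $h$ is a count of real curves degenerating to the reflection-symmetric tropical curve $\tilde{h}$, passing through the $\tilde{\sigma}$-symmetric (hence non-generic) configuration $\{H_t^{-1}(q_i), \tilde{\sigma}(H_t^{-1}(q_i))\}$. The ``standard tropical-to-holomorphic multiplicity comparison'' you invoke is a statement about \emph{complex} curves through \emph{generic} constraints: a tropical curve of weight $\mbox{Mult}(h)=\prod_v \mbox{Mult}_v$ is the limit of exactly $\mbox{Mult}(h)$ complex curves. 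How many of those complex curves are real (i.e.\ $\tilde{\sigma}$-invariant, hence doublings of discs) is a different question, governed by Welschinger-type parity and discriminant considerations; a vertex of even multiplicity can contribute $0$ or $2$ real solutions, and nothing cited in the paper forces the real count to equal $\mbox{Mult}(h)$. Even the existence direction has the same problem: Mikhalkin's correspondence (the proposition quoted after Lemma \ref{9998}) produces \emph{some} approximating complex curves, not ones invariant under $\tilde{\sigma}$. A second obstruction, which the paper itself flags in the Example and Remark following \eqref{1005}, is that doubled curves are never torically transverse---their tropicalizations have unbounded edges not parallel to rays of $\Sigma$---so neither Mikhalkin's theorem nor Nishinou--Siebert applies to $\tilde{h}$ as stated; one would need blow-ups or log-geometric refinements. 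Your proposal acknowledges these issues but then defers exactly this ``transversality and gluing analysis,'' which is the entire mathematical content: filling it in amounts to re-proving Nishinou's disc correspondence theorem \cite{N2}, which the paper deliberately set out to avoid.

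The paper's own proof of Theorem \ref{9999} contains no disc-by-disc bijection and no multiplicity analysis at vertices. It is an induction on $k$ through wall-crossing: the base case $W_0(u)=W_0^{trop}(u)$ is Cho-Oh's classification \cite{CO}; the holomorphic and tropical wall structures are identified for $t\gg 1$ by Theorem \ref{thm:wall}; and then, following the inductive algorithm of Section \ref{1084}, it suffices to check that the transformation attached to each wall is the same on both sides---for extended walls this is Proposition \ref{1301} together with the induction hypothesis, and for walls produced by scattering it is Fukaya's trick (Proposition \ref{1086}) combined with the tropical consistency Lemma \ref{1078}. Equality of the two potentials then propagates chamber by chamber from the known initial chambers. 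Note that the doubling involution and Lemma \ref{9998} enter the paper only in Proposition \ref{1081}, to prove the coarse geometric statement that holomorphic walls lie in a shrinking neighborhood of the tropical walls; the numerical equality of coefficients is extracted entirely from Floer-theoretic wall-crossing (pseudo-isotopies, the divisor axiom, and the correspondence/Stokes argument in Proposition \ref{1301}), precisely so that no real-curve enumeration is ever needed. If you want to salvage your approach, you would need to either import Nishinou's theorem as a black box or redo your step four with a genuine real correspondence argument; as written, the term-by-term matching is not established.
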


\begin{proof}
Fix $q_1,\cdots, q_k\in (\mathbb{C}^*)^2$ in general position. It is well-known that $W_0(u)=W^{trop}_0(u)$ for every $u\in \Int(P)$ \cite{CO}. The holomorprhic wall and tropical wall structures are identified in Theorem \ref{thm:wall} for $t\gg 1$. 

Along the same line as Section \ref{1084}, it suffices to prove that the transformation associated to each wall shows the same behavior as tropical counting of open Gromov-Witten invariants. We will prove the statement by induction on $k$.  If the wall is an extended wall, then the claim is true 
by the induction hypothesis and Proposition \ref{1301}. If the wall is produced by scattering, it follows from Proposition \ref{1086} and Lemma \ref{1078}.
\end{proof}


Finally, we study  the limiting behavior of $W_k^{\epsilon} (u)$ for an 1-parameter family of  $k$ points $  q_1^{\epsilon}, \cdots,   q_k^{\epsilon}$ satisfying $\lim_{\epsilon \to 0} \, \mathrm{Log} (q_i^\epsilon) =0$. From tropical-holomorphic correspondence \ref{thm:correspondence}, it is enough to consider the tropical counter part. Let $\epsilon p_1,\cdots,\epsilon p_k\in \R^2$ be the point constraints under the log map, which converge to the origin as $\epsilon\rightarrow 0^+$, and denote by $W_k^{trop,\epsilon}$ the corresponding tropical potential. The tropical wall structures converge to rays emanating from the origin. 

There is a wall $\mathfrak{d}$ in the direction  of $\partial \beta$  and the corresponding slab function has a term $n_{\beta}z^{\partial \beta}\prod_{j=1}^l t_{i_j}$ if and only if there exist rigid tropical curves passing through $p_{i_1}\cdots, p_{i_l}$ when $\epsilon\ll 1$ with an unbounded edge in the direction of  $ \partial \beta$ (see (a) of Figure \ref{fig:extendedcurve}). Let $X'$ be the toric surface obtained by blowing up $X$ in such a way that the holomorphic curves corresponding to these tropical ones become torically transverse.
These tropical curves are all in the same homology class, say $C_{\beta}\in H_2(X';\mathbb{Z})$. Then one can think of the coefficient $n_{\beta}$ as a weighted count of tropical curves in  $X'$ passing through generic $l$ points in the homology class ${C_\beta}$, the proper transformation of $C_{\beta}$. By the correspondence theorem of Nishinou-Siebert \cite{NS}, the tropical counting of curves in the class ${C_\beta}$ is independent of the generic position of such $l$ points. 

Therefore  if one base changes from $R_k$ to $\C[t]/t^{k+1}$ via 
 \begin{equation}
  \C[t]/t^{k+1} \rightarrow R_k, \ \  t\mapsto \sum_{i=1}^k t_i. \nonumber
  \end{equation}
 it is natural to define the limiting tropical superpotential $W^{\lim}_k(u)$ as   \begin{equation}
      W_k^{trop, \lim}:=\lim_{\epsilon\rightarrow 0} W_k^{trop, \epsilon}(u) \otimes_{R_k} \C[t]/t^{k+1}, \nonumber
   \end{equation} 
   where $t$ is a formal variable with $t^{k+1}=0$. Finally, by Theorem \ref{9999}, there is also a well-defined limiting scaled holomorphic bulk-potential
      \begin{equation}\label{eqn:Wlim}
      W_k^{\lim}:=\lim_{\epsilon \rightarrow 0} W_k^{\epsilon} (u) \otimes_{R_k} \C[t]/t^{k+1}.
   \end{equation} 
Thus, we obtain the following. 

\begin{prop}
   Given $\epsilon p_1,\cdots, \epsilon p_k \in \mathbb{R}^2$ generic and $\epsilon \to 0$,  the limiting wall structures and bulk superpotential $W^{trop, \lim}_k(u)$ are independent of $\{p_1, \cdots, p_k\}$. In particular,   $W^{\lim}_k(u)$ becomes a well-defined function in $\Q[t]/(t^{k+1})\otimes \mathbb{Q}[z_1^{\pm},z_2^{\pm}]$ under the base change map.

\end{prop}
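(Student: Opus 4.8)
The plan is to reduce everything to the tropical side via the correspondence Theorems \ref{thm:correspondence} and \ref{9999}, and then to show that in the limit $\epsilon \to 0$ both the wall directions and the slab coefficients stabilize to quantities that are manifestly insensitive to the point configuration. First I would record that as $\epsilon \to 0$ all the constraints $\epsilon p_i$ collapse to the origin, so that, by the algorithm of Section \ref{1084}, every tropical wall emanates from the origin in the limit. The direction of such a ray is the primitive vector of $\partial\beta \in H_1(L_u;\Z)$ attached to the generalized Maslov index $2$ disc responsible for it, and this direction is determined by the relative class $\beta$ (equivalently by the combinatorial type of the disc), not by the locations of the $p_i$. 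Hence the set of limiting wall directions is a fixed finite collection of rays through the origin, independent of $\{p_1,\dots,p_k\}$.

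The heart of the argument is showing that the coefficient $n_\beta$ attached to the wall in direction $\partial\beta$ is independent of the configuration. Here I would use the doubling construction of Section \ref{sec:HTcorres}: via the anti-symplectic involution \eqref{1005} and Lemma \ref{1300}, the rigid tropical discs through a chosen $l$-subset $\{p_{i_1},\dots,p_{i_l}\}$ correspond to real rational curves, whose tropicalizations pass through those points. Passing to the toric blow-up $X'$ on which these curves become torically transverse, all such curves lie in a single homology class $C_\beta \in H_2(X';\Z)$, so $n_\beta$ is exactly the weighted count of tropical curves in class $C_\beta$ through $l$ generic points. By the Nishinou--Siebert correspondence \cite{NS}, this tropical count equals a log Gromov--Witten invariant of $X'$ and is therefore an enumerative invariant independent of the generic positions of the constraint points. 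Thus $n_\beta$ depends only on $\beta$ (equivalently on the direction and on $l = l(\beta)$), proving that both the limiting wall structure and the limiting slab functions are independent of $\{p_1,\dots,p_k\}$.

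It remains to package this into the base-change statement. Since $n_\beta$ is the same for every $l$-element subset $I$, collecting all walls of a given class contributes $n_\beta\, z^{\partial\beta}\sum_{|I|=l}t_I = n_\beta\, z^{\partial\beta}\, e_l(t_1,\dots,t_k)$, where $e_l$ is the $l$-th elementary symmetric polynomial. Using the defining relation $t_i^2 = 0$ of $R_k$, the only surviving monomials in $\big(\sum_i t_i\big)^l$ are the squarefree ones, each appearing $l!$ times, so $e_l(t_1,\dots,t_k) = \tfrac{1}{l!}\big(\sum_i t_i\big)^l$ in $R_k$. Consequently $W_k^{trop,\lim}(u)$ is precisely the image under the base change $t \mapsto \sum_i t_i$ of the element $\sum_\beta n_\beta\, z^{\partial\beta}\, \tfrac{t^{l(\beta)}}{l(\beta)!} \in \Q[t]/(t^{k+1})\otimes \Q[z_1^\pm,z_2^\pm]$, and this preimage is well defined precisely because its coefficients $n_\beta$ do not depend on the configuration. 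Transporting back through Theorem \ref{9999} identifies this with $W_k^{\lim}$ of \eqref{eqn:Wlim}, which gives the claim.

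The step I expect to be the main obstacle is the middle one: making the reduction to a fixed homology class $C_\beta$ on the blow-up $X'$ rigorous and confirming that the constraints remain in generic position throughout the limit, so that the Nishinou--Siebert count genuinely applies and genuinely yields a configuration-independent number. In particular one must check that the blow-up and the class $C_\beta$ can be chosen uniformly over all walls contributing at order $k$, and that the doubling procedure does not introduce spurious dependence on the $p_i$ through the non-torically-transverse half of each doubled curve.
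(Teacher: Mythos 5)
Your proposal follows the same skeleton as the paper: reduce to the tropical side via Theorems \ref{thm:correspondence} and \ref{9999}, note that the limiting walls are rays through the origin, identify each coefficient $n_\beta$ with a count of tropical curves in a fixed class on a toric blow-up $X'$, invoke Nishinou--Siebert \cite{NS} for position-independence, and finish with the symmetric-function bookkeeping for the base change $t\mapsto\sum_i t_i$ (your elementary-symmetric-polynomial computation is correct and is more explicit than what the paper writes). The gap is in the disc-to-curve step. You pass from tropical discs to curves via the holomorphic doubling, i.e.\ the anti-symplectic involution \eqref{1005} and Lemma \ref{1300}. The paper pointedly does \emph{not} do this: it remarks that Lemma \ref{1300} will not be used, and it warns, in the example around Figure \ref{fig:extendedcurve}, that doubling is not the same operation as the naive tropical extension. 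What the paper actually uses is the extension (picture (a) of Figure \ref{fig:extendedcurve}): the root edge of the generalized Maslov index zero tropical disc is prolonged to an unbounded edge in the direction of $\partial\beta$, giving a rational tropical curve with $l+1$ unbounded ends, and $X'$ is the blow-up whose fan contains this new direction, so that exactly these extended curves become torically transverse and all lie in one class $C_\beta$.

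This distinction is not cosmetic; it is where your argument breaks. The doubled curve is the union of the disc with its $\Z_2$-reflection through $u$, so its tropicalization has (for weight-one ends) $2l$ unbounded ends and passes through the $l$ points \emph{and} their reflections. Rational tropical curves of such a degree move in a family of dimension $2l-1$, so curves in the doubled class through only $l$ generic points form an $(l-1)$-dimensional family: for $l\ge 2$ the quantity you call ``the weighted count of tropical curves in class $C_\beta$ through $l$ generic points'' is not a finite count and cannot equal $n_\beta$. If instead you impose all $2l$ incidence conditions, the configuration is symmetric under reflection, hence non-generic, and cutting down to the symmetric (real) curves gives a real enumerative count to which Nishinou--Siebert does not apply and which is not position-invariant in general --- this is precisely the ``spurious dependence through the non-torically-transverse half'' you flagged as the main obstacle, and it is a genuine obstruction, not a technicality. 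With the paper's extension the dimension count closes: the extended curve has $l+1$ ends, moves in an $l$-dimensional family, and is rigid through the $l$ points, so $n_\beta$ is honestly the Nishinou--Siebert count in the fixed class $C_\beta$ on $X'$, and independence of $\{p_1,\dots,p_k\}$ follows. Replacing your doubling step by the extension repairs the proof; as written, the middle step fails.
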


\subsection{Relation to other works}\label{subsec:reltother}

We finish the section with a brief explanation on the relation between the result here and other existing ones. 

\subsubsection{Bulk-deformation by torus-invariant insertions}
Recall that Fukaya-Oh-Ohta-Ono defined the bulk-deformed superpotential with torus-invariant insertions for general toric manifolds \cite{FOOO_bulk}. Since all the relevant moduli spaces are torus-invariant, such bulk-deformed superpotential is independent of the Lagrangian boundary condition, and in particular, it does not experience a wall-crossing. Fukaya has pointed out, however, that when we change one tours-invariant bulk-insertion to another, a homotopy between them can not be torus-invariant in general. We believe that such a homotopy will also produce some transformation between the associated superpotentials which is similar to our wall-crossing formula.

Consider the case of toric Fano surface and denote $W^{q_1,\cdots, q_k}(u)$ and $W^{q_1',\cdots,q_k'}_{FOOO}$ be the bulk-deformed potentials in this article and in \cite{FOOO_bulk} respectively. We conjecture that 
      \begin{align}\label{979}
        \lim_{t\rightarrow \infty} W^{q^t_1,\cdots,q^t_k}(u)=W_{FOOO}^{q_1',\cdots, q_k'},
      \end{align} if $q_i'$ are distinct and $\lim_{t\rightarrow \infty} q^t_i=q_i'$. 
 For instance, assume that there is only one bulk point constraint $q_1^t \in \mathbb{P}^2$ and $\lim q_1^t=[0:0:1]$. Then the projection $p_1^t$ moves away to infinity along the negative diagonal direction as $t\to \infty$, and hence the region $C$ in Figure \ref{fig:bulkP2} will exhaust the whole plane at the limit. It is straight forward to check that $W_{FOOO}^{[0:0:1]}$ coincide with $W_{1,u_C}$ (in the first order) given in Proposition \ref{prop:firstorderWp2}. 
 
 It is worth noticing that when there are torus-invariant bulk insertion $q_i'=q_j'$, the relevant moduli spaces would have excess dimension, which makes the right hand side of (\ref{979}) harder to compute. While the left hand side of (\ref{979}) may not be well-defined due to the relative positions when taking limit. Thus, it still remain interesting to ask how to compute the bulk-deformed potential defined in \cite{FOOO_bulk} from the one in this paper.

\subsubsection{The dgla structure on the polyvector fields with a descendant variable}
Interestingly, the recent work of Chan-Ma \cite{Chan_Ma} seems to suggest an interpretation of our tropical counting for generalized Maslov zero and two discs in terms of the asymptotic analysis for the Maurer-Cartan equations defined on the mirror dga of polyvector fields as follows.

Given a Landau-Ginzburg model $W \colon X^{\vee}:=(\C^*)^n \rightarrow \C$ for any toric Fano variety $X,$ there is a dgla structure defined on its polyvector fields $PV_{X^{\vee}}^{*,*}$, which governs the deformation of complex structures on $X^{\vee}$ together with the choice of a holomorphic volume form $\Omega.$ We set $\Omega:=\frac{dx_1}{x_1}\wedge \cdots \wedge\frac{dx_n}{x_n}$ and define the polyvector fields cochain complex to be
\begin{equation}
PV^{*,*}_{X^{\vee}} \cong \Omega^{0, *}(X^{\vee}, T^{*,0}). \nonumber
\end{equation}
There is an isomorphism $PV^{p,q}(X^{\vee}) \cong \Omega^{n-p,q}(X^{\vee})$ depending on the choice of $\Omega,$ which locally is defined as
\begin{equation}
( \partial_I \vdash \Omega) := \iota_{\partial_I} \Omega, \ \ \partial_I:=\frac{\partial}{\partial_{z_{i_1}}} \wedge \cdots \wedge \frac{\partial}{\partial_{z_{i_n}}} \text{ for some }n. \nonumber
\end{equation}
Under such an isomorphism, the differentials $\bar{\partial}+dW\wedge$ and $\partial$ in the twisted de Rham complex become derivations of degree $1$ and $-1$ respectively if we grade the elements in $PV^{i,j}_{X^{\vee}}$ by $i+j$,
\begin{equation}
\bar{\partial}+\iota_{dW} \colon PV^{*,*}_{X^{\vee}}  \rightarrow PV^{*,*+1}_{X^{\vee}} \text{ and } \partial:=\Delta=(\vdash \Omega)^{-1}\circ \partial \circ (\vdash \Omega) \colon  PV^{*,*}_{X^{\vee}}  \rightarrow PV^{*-1,*}_{X^{\vee}} \nonumber.
\end{equation}
To obtain a well-defined differential of degree $1$, one can consider the polyvector fields with descendants
\begin{equation}
PV^{*,*}_{X^{\vee}}[[\hbar]], \ \ Q:=\bar{\partial}_W+\hbar \partial \nonumber,
\end{equation}
where $\bar{\partial}_W:=\bar{\partial}+\{W, \cdot\}$, the bracket $\{ \cdot, \cdot \}$ denotes the Schouten-Nijenhius bracket and $\hbar$ is a formal variable of degree $2$. There is a dgla structure on $PV^{*,*}_{X^{\vee}}[[\hbar]]$ whose Maurer-Cartan equation is written as
\begin{equation} \label{eqn:PV_MC}
Q(\Xi^{1,1}+\hbar\Xi^{0,0})+\frac{1}{2}\{\Xi^{1,1}+\hbar\Xi^{0,0}, \Xi^{1,1}+\hbar\Xi^{0,0}\}=0. 
\end{equation}
One verifies that this Maurer-Cartan equation is equivalent to the following two equations,
\begin{eqnarray}
&& \bar{\partial}_W(\Xi^{1,1})+\frac{1}{2}\{ \Xi^{1,1}, \Xi^{1,1}\}=0, \nonumber\\
&&  \bar{\partial}\Xi^{0,0} +\{W,\Xi^{0,0}\}+\{\Xi^{0,0}, \Xi^{1,1}\}=0, \nonumber
\end{eqnarray}
where the first says that $\Xi^{1,1} \in H^1(X^{\vee}, T_{X^{\vee}
})$ is the Kodaira-Spencer class associated to a deformation of the complex structure on $X^{\vee}$ and the second equation means that the top form $e^{\Xi^{0,0}}\Omega$ is a holomorphic volume form in the new complex structure. We remark that this dgla on polyvector fields with descendants on the B-side have been considered in \cite{LLS} and \cite{Costello_Li}. It is recently proved by Chan-Ma \cite[Theorem 1.1]{Chan_Ma} that the asymptotic solutions of the Maurer-Cartan equations for this dgla can be obtained order by order locally in $PV^{*,*}_{X^{\vee}}\otimes_{\C}R_k$ for all $k$. Moreover, they also define a \textit{tropical dgla} on the A-side, which is mirror to the polyvector field with descendants. In terms of the solutions $\Xi^{0,0}$ and $\Xi^{1,1}$ to \eqref{eqn:PV_MC} in each order $k,$ one has that $\Xi^{0,0}$ and $\Xi^{1,1}$ corresponds to the generalized Maslov zero and the generalized Maslov two discs that we define. From a symplectic geometric point of view, such dgla or $L_{\infty}$ structure can be conceivably obtained from the TQFT structures on the symplectic/logarithmic cohomology of the log Calabi-Yau pair $(X, D)$ considered by Ganatra-Pomerleano in \cite{GP1, GP2}.

\section{The big quantum period theorem}\label{sec:period_thm}
As the corollary of the tropical/holomorphic correspondence of holomorphic discs, we follow Gross's argument to achieve following the  big quantum period theorem which provides a relationship between log Gromov-Witten correlation function and oscillatory integrals of the $k$th order bulk-deformed potential $W_k$ computed in Section \ref{sec:HTcorres}.
\begin{theorem} \label{thm:period_thm}
Let $X$ be a toric Fano surface and let $L$ be any Lagrangian torus fiber of the moment map $\pi.$ The bulk-deformed potential $W_k$ associated to $L$ satisfies  for any $k \in \N$
\begin{eqnarray} \label{eqn:period_thm}
&& \ \  \frac{1}{(2\pi i)^2}\int_{T^2}e^{W_k/\hbar}\frac{dz_1}{z_1}\wedge \frac{dz_2}{z_2} \\
&&  =1+\sum_{I}\sum_{m \geq 2}\sum_{\Delta: |\Delta|=m+n}  \frac{1}{|\mbox{Aut}(\Delta)|}\langle p_1,\cdots,p_n, \psi^{m-2}u\rangle^{X(\log{D})}_{\Delta,n}\hbar^{-m}t_I \nonumber
\end{eqnarray}
where $I \subset \{1,2, \cdots, k\}$ is an ordered subset such that $|I|=n$ and $\langle \cdot , \cdots, \cdot \rangle^{X(\log{D})}_{\Delta,n}$ denotes the log Gromov-Witten invariants. Similarly, one also has the following statement after base change from $R_k$ to $\C[t]/t^{k+1}$ via the ring map $t \mapsto \sum_{i=1}^k t_i,$
\begin{eqnarray} \label{eqn:period_Wlim}
&& \ \  \frac{1}{(2\pi i)^2}\int_{T^2}e^{W^{lim}_k(u)/\hbar}\frac{dz_1}{z_1}\wedge \frac{dz_2}{z_2} \nonumber \\
&&  =1+\sum_{0 \leq n \leq k}\sum_{m \geq 2}\sum_{\Delta: |\Delta|=m+n}  \frac{1}{|\mbox{Aut}(\Delta)|}\langle p_1,\cdots,p_n, \psi^{m-2}u\rangle^{X(\log{D})}_{\Delta,n}\hbar^{-m}\frac{t^n}{n!}, \nonumber
\end{eqnarray}
where $\displaystyle W_k(u)^{lim}$ is the limiting bulk-deformed potential defined in equation \eqref{eqn:Wlim}.
\end{theorem}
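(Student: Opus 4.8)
The plan is to push everything to the tropical side and reduce the period identity to a combinatorial gluing statement. First I would invoke the tropical--holomorphic correspondence (Theorem \ref{9999}) to replace the holomorphic potential $W_k(u)$ by its tropical counterpart $W_k^{trop}(u)=\sum_h \mbox{Mult}(h) z^{\partial[h]}t_h$ inside the oscillatory integral, the sum running over rigid generalized Maslov two tropical discs ending on $u$. Expanding the exponential gives
\begin{equation}
\frac{1}{(2\pi i)^2}\int_{T^2} e^{W_k/\hbar}\,\frac{dz_1}{z_1}\wedge\frac{dz_2}{z_2} = \sum_{m\geq 0}\frac{\hbar^{-m}}{m!}\,\frac{1}{(2\pi i)^2}\int_{T^2}\Big(\sum_h \mbox{Mult}(h) z^{\partial[h]} t_h\Big)^m \frac{dz_1}{z_1}\wedge\frac{dz_2}{z_2}. \nonumber
\end{equation}
Since $\frac{1}{(2\pi i)^2}\int_{T^2} z_1^{a_1}z_2^{a_2}\frac{dz_1}{z_1}\wedge\frac{dz_2}{z_2}=\delta_{a_1,0}\delta_{a_2,0}$, the residue extracts the constant monomials, so the $m$-th summand equals $\frac{\hbar^{-m}}{m!}$ times a sum over ordered $m$-tuples $(h_1,\dots,h_m)$ subject to $\sum_i\partial[h_i]=0$, weighted by $\prod_i\mbox{Mult}(h_i)\prod_i t_{h_i}$. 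The term $m=0$ supplies the leading $1$, and $m=1$ vanishes because any single disc has $\partial[h]\neq 0$; hence the effective sum starts at $m\geq 2$.

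Next I would read a balanced tuple as one tropical descendant curve. Gluing the $m$ discs at their common root $u$ produces a rational tropical curve with a distinguished vertex $x=u$ of valency $m$, and the constant-term condition $\sum_i\partial[h_i]=0$ is exactly the balancing condition at $x$; conversely, Lemma \ref{lem:split_MI2} shows that cutting such a curve at $x$ recovers precisely $m=\nu+2$ generalized Maslov two discs. Thus $\nu=m-2$, which produces the descendant insertion $\psi^{m-2}u$, and counting unbounded edges gives $|\Delta|=\sum_i(1+|p_{h_i}|)=m+n$, where $n=|I|$ is the total number of point constraints distributed among the branches (these are automatically disjoint because $t_i^2=0$). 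This yields a bijection between balanced ordered $m$-tuples and orderings of the branches of curves in $\mathcal{M}^{trop}_{\Delta,n}(X,p_1,\dots,p_n,\psi^{m-2}u)$.

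The heart of the argument, and the step I expect to be most delicate, is matching the numerical weights, i.e.\ reconciling the $1/m!$ from the exponential with the factor $\prod_\rho \frac{1}{n_\rho!}$ built into the tropical descendant multiplicity. A rigid generalized Maslov two disc carrying no interior marking is forced to be one of the straight basic discs emanating from $u$ in a ray direction $\rho$, so the branches that are unbounded edges adjacent to $x$ in direction $\rho$ are mutually identical and occur with multiplicity $n_\rho$, whereas all remaining branches are pairwise distinct since the $p_i$ are distinct and generic. Hence a fixed tropical descendant curve arises from exactly $m!/\prod_\rho n_\rho!$ ordered tuples, so $\frac{1}{m!}\prod_i\mbox{Mult}(h_i)$ collapses to $\prod_\rho\frac{1}{n_\rho!}\prod_v \mbox{Mult}_v=\mbox{Mult}'(h)$, the weight of Definition \ref{defn:tropical_descedant}. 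Summing over curves and over the choices of $I$ gives $\sum_I\langle p_1,\dots,p_n,\psi^{m-2}u\rangle^{trop}_{\Delta,n}\,t_I$, and Theorem \ref{thm:MR_main} of Mandel--Ruddat replaces each tropical descendant by $\frac{1}{|\mbox{Aut}(\Delta)|}$ times the log Gromov--Witten descendant, producing the first displayed identity.

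Finally, for the limiting statement I would send $\epsilon\to 0$ so that all constraints collapse to the origin. By the correspondence together with Nishinou--Siebert \cite{NS}, the relevant counts become independent of the choice of the $n$ points, so the coefficient of $t_I$ is the same for every ordered $I$ of size $n$; under the base change $t\mapsto\sum_{i=1}^k t_i$ the relation $t_i^2=0$ gives $(\sum_i t_i)^n=n!\sum_{|I|=n}t_I$, i.e.\ $\sum_{|I|=n}t_I=t^n/n!$, which converts the sum over $I$ in the first identity into the factor $t^n/n!$ and yields the second identity.
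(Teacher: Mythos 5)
Your proposal is correct and takes essentially the same route as the paper's own proof: both extract the constant terms of $\frac{1}{m!}W_k^m$ by the residue formula after passing to the tropical side via Theorem \ref{9999}, both identify balanced collections of $m$ generalized Maslov two tropical discs with rigid descendant tropical curves through gluing at $u$ and splitting via Lemma \ref{lem:split_MI2}, both reconcile the $1/m!$ with the $\prod_\rho \frac{1}{n_\rho!}$ of Definition \ref{defn:tropical_descedant} using $t_i^2=0$ to force marked discs to appear at most once, and both conclude with Theorem \ref{thm:MR_main} and the base change $t\mapsto \sum_i t_i$. The only cosmetic difference is that you organize the weight-matching through ordered $m$-tuples whereas the paper uses the multinomial coefficient $m!/(m_1!\cdots m_d!\, m_1'!\cdots m_{\tilde d}'!)$ with $m_j'\leq 1$ — these are the same computation.
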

\begin{remark} \label{rmk:Gross}
In the case of $X=\mathbb{P}^2,$ Gross \cite{G7} proved a big quantum period theorem which relates tropical descendant invariants and oscillatory integrals of the form
\begin{equation} \label{eqn:Gross}
1+\sum_{I=\{i_1, \cdots, i_n\}}\sum_{m \geq 2}\langle q_{i_1}, \cdots ,q_{i_n}, \psi^{m-2}(\alpha_i)\rangle_{X,d}^{trop} \hbar^{-m} t_I=\int_{\Xi_i} e^{W_k/\hbar} \frac{dz_1}{z_1} \wedge \frac{dz_2}{z_2}, 
\end{equation}
where $\alpha_0=1, \alpha_1=a$ and $\alpha_2=a^2$ are generators of $H^*(\mathbb{P}^2)$. The cycles $\Xi_i$ that he considered are given by $\Xi_0=\frac{1}{(2\pi i)^2}T^2$ and some suitable relative classes $\Xi_1$ and $\Xi_2 \in H_2(X, \mathrm{Re}(W/\hbar) \ll 0; \Z)$ whose homology classes are generated by Lefschetz thimbles of superpotential $W=z_1+z_2 +\frac{1}{z_1z_2}$ (See \cite[Remark 2.42]{Gross_book} for details).\\
\indent For the first period integral $\frac{1}{(2\pi i)^2}\int_{T^2} e^{W_k/\hbar} \frac{dz_1}{z_1} \wedge \frac{dz_2}{z_2}$ for $\mathbb{P}^2,$ Theorem \ref{thm:period_thm} can also be obtained as a consequence of our Theorem \ref{thm:correspondence} and M. Gross's big quantum period theorem which relates tropical descendant invariants with oscillatory integrals as in \eqref{eqn:Gross}.  
\end{remark}

\indent A priori, the expressions of the bulk-deformed potential $W_k$ that we computed in Section \ref{sec:HTcorres} depend on the choice of chambers in the base of the SYZ fibration. However, between any two different chambers, the $k$th-order bulk-deformed potential $W_k$ differ by compositions of cluster transformations of the form \eqref{eqn:cluster_trans}. Direct computation shows that the resulting oscillatory integrals are invariant under such transformations. Hence the left hand side of equation \eqref{eqn:period_thm} is well-defined and independent of the choices of Lagrangian torus fiber $L_u=\pi^{-1}(u)$.

  \subsection*{The proof of the big quantum period theorem} 
    The proof is analogous to that of Gross \cite[Section 3]{G7} when $X=\mathbb{P}^2$.
   By the Cauchy residue formula, one first observes that the left hand side in equation \eqref{eqn:period_thm} can be interpreted as
   \begin{equation}\label{eqn:residueconst}
   \frac{1}{(2\pi i)^2}\int_{T^2}e^{W_k/\hbar}\frac{dz_1}{z_1}\wedge \frac{dz_2}{z_2} =1+ \sum_{m=2}^{\infty}\sum_{I} \frac{1}{m!} \textnormal{coefficients of } (W_k^m) \,\,  \textnormal{in front of }\hbar^{-m}t_I, \nonumber
   \end{equation}
   where $I=\{i_1, \cdots, i_n\}$ is a subset of $\{1, 2, \cdots k\}$ for some $n \leq k$ and $t_I:=t_{i_1}\cdots t_{i_n}$. 
   By our tropical-holomorphic correspondence theorem \ref{thm:correspondence}, one notices that the above equation is equivalent to the following statements,
   \begin{equation} 
   \resizebox{1\hsize}{!}{$\textnormal{coefficients of } \left( \frac{1}{m!}W_k^m \right) \textnormal{ in front of }t_I \text{ in } \C  =  \sum_{n \leq k}\langle  p_{i_{1}}, \cdots, p_{i_{n}}, \psi^{m-2}u \rangle_{\Delta,n}^{trop},$}\nonumber
   \end{equation}
   \begin{equation} 
   \resizebox{1\hsize}{!}{$\textnormal{or constant terms of } \left( \frac{1}{m!}W_k^m \right) \textnormal{ in } R_k=\sum_{n \leq k} \langle p_{i_{1}}, \cdots, p_{i_{n}}, \psi^{m-2}u  \rangle_{\Delta, n}^{trop},$}\nonumber
   \end{equation} 
   where $|\Delta|=m+n$. 
   
   \indent
    Let $(h_i, T_i,w_i)$, $i=1,\cdots, d$, be the tropical discs in the expansion of the Hori-Vafa potential $W_0$  and $(h_j', T_j', w_j')$, $j=1,\cdots, \tilde{d}$,  be the generalized Maslov index two tropical discs which contribute to the bulk-deformed potential $W_k$ but not $W_0$. Given a non-zero constant term in the expansion of $\frac{1}{m!}W_k^m$ is equivalent to a collection of $m$ tropical discs with boundary classes sum up to zero, which is responsible for the constant term in $W_k^m$. Since all such tropical discs share the same end, say $u \in \R^2$, the collection will glue to a rigid tropical descendant rational curve $(h,T,w)$ (Figure \ref{fig:highval}) which has exactly one vertex $u \in T^{[0]}$ of higher valency and 
    \begin{align*}
          (h,T,w)\in \mathcal{M}_{\Delta,n}^{trop}(X,p_{i_1},\cdots,p_{i_n},\psi^{m-2}u),
     \end{align*} for some $\Delta$ and some index set $I\subseteq \{1,\cdots, k\}$ with $n=|I|=|\Delta|-m$. 
   Conversely, given any descendant tropical curve in $\mathcal{M}^{trop} _{\Delta,n}(p_{i_1},\cdots, p_{i_k},\psi^{m-2}u)$, it is rigid if $n=|I|=|\Delta|-m$. One can also chop off at $u$ and each component is a generalized Maslov index two tropics disc due to Lemma \ref{lem:split_MI2}. \\
   \indent 
   Since $t_i^2=0$ in $R_k,$ one concludes that $(h_j')^2=0$. Let $m_i$ and $m_j'$ be the number of copies of $(h_i,T_i,w_i)$ and $(h_j', T_j', w_j')$ respectively in the given constant term. Then $m_j'$ are at most $1$. The given term appears 
   $$\binom{m}{m_1 \, \cdots \,  m_d \, \, m_1' \, \cdots \, m'_{\tilde{d}}} = \dfrac{m!}{ m_1 ! \cdots m_d! m_1' ! \cdots m'_{\tilde{d}}! } = \dfrac{m!}{m_1! \cdots m_d !}$$
   many times in $W_k^m$ and thus
    contributes the constant term of $\frac{1}{m!}W_k^m$ by
   \begin{eqnarray} \label{eqn:compconstp}
   \dfrac{1}{m!}  \cdot\dfrac{  m!}{ m_1 ! \cdots m_{d} !} \prod Mult (h_i)^{m_i}  \prod Mult ({\tilde{h}_j}) =Mult(h)\nonumber 
   \end{eqnarray}
   
   \begin{figure}[htb!]
       \includegraphics[scale=0.5]{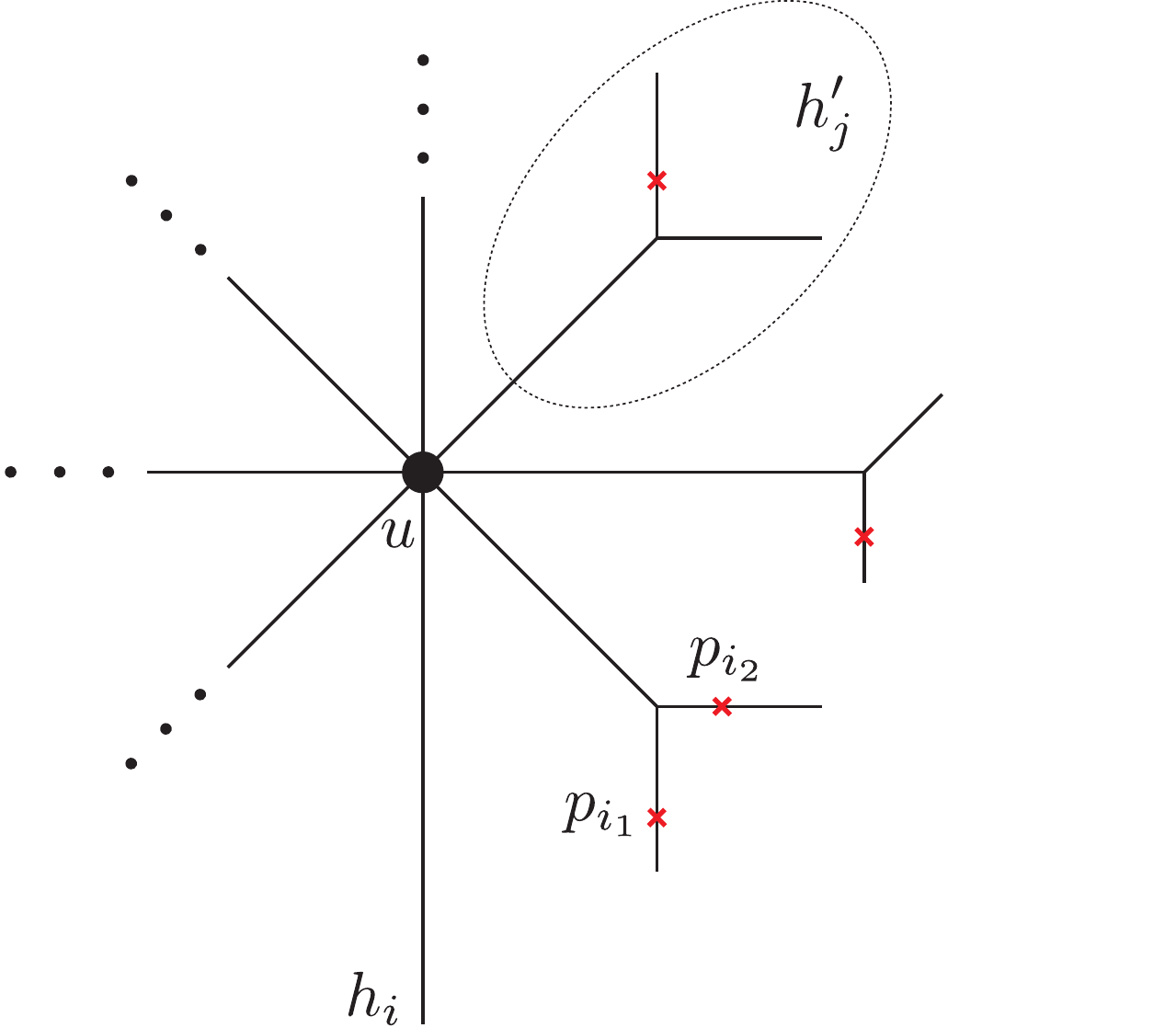}
       \caption{A descendant tropical curve $(h,T,w)$}
   		\label{fig:highval}
   \end{figure}
 Therefore, the quantum period is 
   \begin{align*}
      \frac{1}{(2\pi i)^2}\int_{T^2}e^{W_k/\hbar}\frac{dz_1}{z_1}\wedge \frac{dz_2}{z_2}=1+\sum_{I,m,\Delta} \sum_h  Mult(h) \hbar^{-m}t_I,
   \end{align*}
   where the first summation is taking over $I=\{i_1, \cdots, i_n \}\subseteq \{1,\cdots,k\},$ all $\Delta$ such that $|\Delta|=m+n$ and $m \geq 2$, and the second summation is over all $h\in \mathcal{M}^{trop}_{\Delta,n}(p_{i_1},\cdots,p_{i_n},\psi^{m-2}u)$ and $\Delta$ such that $|\Delta|=m+n$. By Definition \ref{defn:tropical_descedant}, one has that
   \begin{align*}
       & \ \ \ \ \ \frac{1}{(2\pi i)^2}\int_{T^2}e^{W_k/\hbar}\frac{dz_1}{z_1}\wedge \frac{dz_2}{z_2}\\
       &=1+\sum_{0 \leq n \leq k}\sum_{m \geq 2} \sum_{\Delta: |\Delta|=m+n} \langle p_{i_1},\cdots,p_{i_n},\psi^{m-2}u\rangle^{trop}_{\Delta,n} \hbar^{-m}t_I\\
        &=1+\sum_{0 \leq n \leq k}\sum_{m \geq 2} \sum_{\Delta: |\Delta|=m+n}  \langle p_{i_1},\cdots,p_{i_n}, \psi^{m-2}u\rangle^{trop}_{\Delta,n} \hbar^{-m} t_I\\
        &=1+\sum_{0 \leq n \leq k} \sum_{m \geq 2} \sum_{\Delta: |\Delta|=m+n} \frac{1}{|\mbox{Aut}(\Delta)|}\langle p_{i_1},\cdots,p_{i_n}, \psi^{m-2}u\rangle^{X(\log{D})}_{\Delta,n}\hbar^{-m}t_I,
   \end{align*} 
   where $t_I=t_{i_1} \cdots t_{i_n}$ and we have applied Theorem \ref{thm:MR_main} in the last equation.
   If we furthermore set $t=\sum t_i$, then we obtain that 
   \begin{eqnarray}
   && \ \ \ \frac{1}{(2\pi i)^2}\int_{T^2}e^{W_k/\hbar}\frac{dz_1}{z_1}\wedge \frac{dz_2}{z_2}\nonumber \\
   && =1+\sum_{0 \leq n \leq k}\sum_{m \geq 2}\sum_{\Delta: |\Delta|=m+n}  \frac{1}{|\mbox{Aut}(\Delta)|}\langle p_{i_1},\cdots,p_{i_n}, \psi^{m-2}u\rangle^{X(\log{D})}_{\Delta,n}\hbar^{-m}\frac{t^n}{n!}, \nonumber
   \end{eqnarray}
where we used the fact that $\langle p_{i_1},\cdots, p_{i_n},\psi^{m-2}u\rangle^{trop}_{\Delta,n}$ is independent of the positions of $p_{i_1},\cdots,p_{i_n}$.

\begin{bibdiv}
\begin{biblist}
\bibselect{file001}
\end{biblist}
\end{bibdiv}

\end{document}